\documentclass[a4paper, 12pt]{article}

\usepackage[T1]{fontenc}
\usepackage[utf8]{inputenc}
\usepackage{amsmath,amssymb,amsthm}
\allowdisplaybreaks
\usepackage{a4wide}
\usepackage{mathrsfs}
\usepackage{bbold}
\usepackage{graphicx}

\usepackage{enumitem}

\usepackage[sort]{natbib}
\makeatletter
\def\NAT@spacechar{~}
\newcommand{\myciteyearp}[1]{%
  \begingroup
    \def\NAT@nmfmt##1{}%
    \let\saved@NAT@open\NAT@open
    \def\NAT@open{\unskip\saved@NAT@open}%
    \citet{#1}%
  \endgroup
}
\makeatother

\usepackage{authblk}

\newtheorem{theorem}{Theorem}
\newtheorem{definition}[theorem]{Definition}
\newtheorem{lemma}[theorem]{Lemma}
\newtheorem{proposition}[theorem]{Proposition}
\newtheoremstyle{myremark}{}{}{}{}{\bfseries}{.}{ }{}
\theoremstyle{myremark}
\newtheorem{remark}[theorem]{Remark}

\newcommand{\NN}{\mathbb{N}}
\newcommand{\RR}{\mathbb{R}}
\newcommand{\UU}{\mathbb{U}}
\newcommand{\KK}{\mathbb{K}}
\newcommand{\ZZ}{\mathbb{Z}}
\newcommand{\ind}{\mathbb{1}}
\newcommand{\EC}{\mathscr{C}}
\newcommand{\ED}{\mathscr{D}}
\newcommand{\EX}{\mathscr{X}}
\newcommand{\dd}{\,\mathrm{d}}
\newcommand{\Ex}{\mathbf{E}}
\newcommand{\Pb}{\mathbf{P}}
\newcommand{\bil}{\mathopen}
\newcommand{\bir}{\mathclose}
\newcommand{\Card}{\mathop{\mathrm{Card}}}
\newcommand{\sign}{\mathop{\mathrm{sign}}}
\newcommand{\argsup}{\mathop{\rm argsup}\limits}
\newcommand{\mylimsup}{\mathop{\overline\lim}\limits}

\begin{document}

\title{Extension of Likelihood Ratio Analysis Method to Skorokhod $M_1$
  Topology (with Application to Poissonian Smooth Change-Point Model)}
\author[1]{Arij Amiri}
\author[2]{Sergueï Dachian}
\affil[1,2]{\small Univ.\ Lille, CNRS, UMR 8524 --- Laboratoire Paul Painlevé,
  F--59000 Lille, France}
\affil[1]{\small Seenovate, F--75009 Paris, France}
\date{}
\maketitle

\begin{abstract}
We extend the Ibragimov--Khasminskii likelihood ratio analysis method, in the
one-dimensional parameter case, to a framework based on the Skorokhod $M_1$
topology.  The proposed framework applies to a broad class of statistical
models, including those for which the normalized likelihood ratio processes
are continuous while the limiting likelihood ratio process is discontinuous, a
setting outside the scope of classical approaches based on the uniform or
Skorokhod $J_1$ topologies.  We derive sufficient conditions for the weak
convergence of likelihood ratio processes in the space of càdlàg functions
on~$\RR$ vanishing at~$\pm\infty$, endowed with the Skorokhod $M_1$ topology,
and show how this convergence yields the asymptotic behavior of the maximum
likelihood estimator.  We also introduce new techniques for controlling the
$M_1$ modulus of continuity.  Although these techniques are particularly well
suited to models in which all jumps and steep continuous transitions are in
the same direction, they are of independent interest and may prove useful
beyond this setting.  As an application, we study a smooth change-point model
for inhomogeneous Poisson processes in the fast regime, where the transition
interval shrinks faster than~$1/n$.  We establish that in this case the
maximum likelihood estimator has the same asymptotic behavior (consistency,
rate of convergence, limiting distribution and convergence of moments) as in
the corresponding ``pure'' change-point model.

\bigskip
\noindent
\textbf{Keywords:} Likelihood ratio analysis; Ibragimov--Khasminskii method;
Skorokhod $M_1$ topology; weak convergence in Skorokhod spaces; maximum
likelihood estimator; change-point models; smooth change-point; inhomogeneous
Poisson processes.

\bigskip
\noindent
\textbf{AMS subject classification:} 62B15, 60B10, 62F12, 60G55.
\end{abstract}

\section{Introduction}

The Ibragimov--Khasminskii likelihood ratio analysis method is one of the most
powerful tools for the asymptotic study of both regular and non-regular
parametric statistical models for various types of observations
(i.i.d.~observations, signal-in-noise models, Poisson processes, diffusion
processes, time series, and so~on).  Introduced and developed by
\citeauthor{IbHas81} in a series of 
works~\myciteyearp{IbHas70,IbHas72,IbHas72S,IbHas73,IbHas75,IbHas74,IbHas81},
this method is based, in the one-dimensional parameter case, on the asymptotic
analysis of the \emph{(normalized) likelihood ratio (process)}
\[
Z_n(u) = \frac{L \bigr(\theta+u\varphi_n, X^{(n)}\bigl)}{L \bigr(\theta,
  X^{(n)}\bigl)}\,,\qquad
u\in\bigl(\varphi_n^{-1}(\alpha-\theta),\varphi_n^{-1}(\beta-\theta)\bigr),
\]
where $\theta\in\Theta=(\alpha, \beta)\subset\RR$ is the true value of the
unknown parameter, $\varphi_n \searrow 0$ is a suitably chosen normalization
rate, $X^{(n)}$ is the observation, and $L$ is the likelihood of the model.
First, the convergence of $Z_n$ (suitably extended to the whole real line) to
a \emph{limiting likelihood ratio (process)\/}~$Z$ is established, and then
the asymptotic properties of the maximum likelihood estimator (MLE) and of the
Bayesian estimators (BEs) are deduced.  In particular, this approach allows
one to obtain consistency, rates of convergence, limiting distributions and
convergence of moments of the estimators, as well as asymptotic efficiency of
the BEs.  Note that in some cases (in particular in regular models), the MLE
is also asymptotically efficient, and that the weak convergence of the
likelihood ratio in an appropriate function space is essential for its study,
while for the BEs, the convergence of finite-dimensional distributions
(together with some inequalities) is sufficient.

The choice of the function space in which the convergence of the likelihood
ratio is studied is a crucial part of the method.  In the regular case, as
well as in some singular problems where both the normalized and limiting
likelihood ratio processes have continuous trajectories, the convergence is
studied in the space $\EC_0^{(U)}(\RR)$ of continuous functions on~$\RR$
vanishing at~$\pm\infty$, endowed with the uniform topology (see, for example,
\citet{IbHas72S,IbHas73,IbHas74,IbHas81}, as well as
\citet{Kut77,Kut79,Kut84,Kut98,Kut23} and \citet{D03S,D11S} for Poissonian
observations).  In problems where the trajectories of the likelihood ratio
processes are discontinuous càdlàg functions (such as, for example,
change-point models for i.i.d.\ and Poissonian observations), the standard
approach is to work in the Skorokhod space $\ED_0^{(J_1)}(\RR)$ of càdlàg
functions on~$\RR$ vanishing at~$\pm\infty$, endowed with the usual Skorokhod
topology~$J_1$ (see, for example, \citet{IbHas72,IbHas81}, as well as
\citet{Kut78,Kut84,Kut98,Kut23} for Poissonian observations).  To the best of
our knowledge, these are the only two function spaces used in the likelihood
ratio analysis method so far.

However, there are statistical models for which neither of these two classical
settings is adequate.  This is the case, for example, of the \emph{smooth
change-point\/} model for Poisson processes (more details will be given below)
considered in a recent paper \citet{Amiri-Dachian} of the authors.  Indeed, in
this model the trajectories of the normalized likelihood ratio processes are
continuous (for every $n$), while those of the limiting likelihood ratio
process are discontinuous.  Such convergence cannot take place in the
topology~$J_1$, introduced by \citeauthor{Sko} in~\myciteyearp{Sko} for the
case of càdlàg functions on a compact interval (see also \citet{Billingsley}
and \citet{Whitt}), as the latter is adapted to the situations where each jump
of the limiting function is ``approximated'', as $n\to+\infty$, by a jump in
the prelimit function, and hence does not allow the convergence of continuous
functions to discontinuous ones.

The main contribution of this paper is therefore the extension of the
Ibragimov--Khasminskii likelihood ratio analysis method to a weaker Skorokhod
topology, namely the topology~$M_1$.  This topology, also introduced by
\citeauthor{Sko} in~\myciteyearp{Sko} for the case of càdlàg functions on a
compact interval, is defined through parametric representations of completed
graphs of càdlàg functions.  Unlike the topology~$J_1$, it allows a jump of
the limiting function to be approximated by a steep continuous transition in
the prelimit function.  This makes the topology~$M_1$ a natural framework for
likelihood ratio analysis in models where continuous likelihood ratio
processes converge to discontinuous limits.  General references on Skorokhod
topologies and weak convergence in spaces of càdlàg functions include
\citet{Avram-89,Avram-92,Billingsley,Pomarede} and \citet{Whitt}.

In order to develop a version of the likelihood ratio analysis method which
uses the space $\ED_0^{(M_1)}(\RR)$ of càdlàg functions on~$\RR$ vanishing
at~$\pm\infty$, endowed with the topology~$M_1$, we first recall the weak
convergence criteria in the spaces $\EC_0^{(U)}(\RR)$ and
$\ED_0^{(J_1)}(\RR)$, and introduce the corresponding criterion
in~$\ED_0^{(M_1)}(\RR)$.  We then adapt the techniques used in the likelihood
ratio analysis method to the topology~$M_1$.  In particular, we derive
sufficient conditions for the weak convergence of normalized likelihood ratio
processes in $\ED_0^{(M_1)}(\RR)$, based on convergence of finite-dimensional
distributions, control of the $M_1$ modulus of continuity on finite intervals
and exponential-type bounds on the tails of the likelihood ratio processes.

Next, we show how the asymptotic properties of the MLE can be deduced from
this $M_1$~convergence.  More precisely, if the normalized likelihood ratio
process $Z_n$ converges weakly in $\ED_0^{(M_1)}(\RR)$ to a limiting
process~$Z$, and if $Z$ almost surely attains its maximum at a unique point,
then the MLE $\hat{\theta}_n$ converges at rate $\varphi_n$ and its limiting
distribution is given by the location of this maximum:
\[
\varphi_n^{-1}(\hat{\theta}_n-\theta) \Longrightarrow \eta = \argsup_{u\in\RR}
Z(u).
\]
This extends the classical $J_1$ argument of \citet{IbHas81} to the $M_1$
framework.  Note that the uniqueness of the maximizer is essential in this
step.  We can nevertheless mention the paper~\citet{SeiSen}, which provides
some results for the case where the maximizer is not unique, albeit in a very
particular setting.

Another contribution of the paper is the development of tools for the control
of the $M_1$ modulus of continuity (required to ensure the $M_1$ tightness).
We introduce the moduli of increase and decrease and relate them to the
uniform and $M_1$ moduli of continuity.  We also introduce the notions of
Lipschitz growth and decay.  These ``one-sided'' tools make, in particular,
the topology~$M_1$ much easier to control than the topology~$J_1$ in the cases
where all jumps and steep continuous transitions are in the same direction.
In particular, for change-point models in which all jumps have the same sign,
the use of the topology~$M_1$ can essentially simplify the existing proofs by
avoiding some delicate estimates on the probability of observing several close
jumps.  Let us also note that the notions of moduli of increase and decrease
introduced here are closely related to quantities measuring monotonicity and
bounded variation of functions, as considered for instance in \citet*{Appell}
and \citet{Banas}.

As we already mentioned, the motivation for developing the $M_1$ extension of
the likelihood ratio analysis method comes from the smooth change-point model
for inhomogeneous Poisson processes considered in \citet{Amiri-Dachian}.
Inhomogeneous Poisson processes are both simple enough to allow the use of
likelihood ratio analysis, and sufficiently rich to model various random
phenomena in diverse applied fields, such as biology, telecommunications,
seismology, astronomy, reliability theory, and so on (see, for example,
\citet{ChaFin18,CoLew66,Sarkar16,SnyMil12,Streit10} and \citet{Thompson88}).
Recall that $X=\bigl(X(t),\ 0\leq t\leq T\bigr)$ is an inhomogeneous Poisson
process (on a finite interval~$[0,T]$) with intensity function~$\lambda$, if
$X(0)=0$ and the increments of~$X$ on disjoint intervals are independent
Poisson random variables, with the parameter of $X(v)-X(u)$ given by~$\int_u^v
\lambda(t) \dd t$.

In the smooth change-point model for Poisson processes, the intensity function
is supposed to change continuously from one level to another over a transition
interval of length~$\delta_n \to 0$, and the unknown parameter is the location
$\theta$ of this transition.  For each fixed~$n$, the intensity function is
continuous, but as $n\to+\infty$, the model becomes asymptotically close to a
``pure'' (discontinuous) change-point model.  Such a model is natural from an
applied point of view, because physical systems cannot switch instantaneously
from one regime to another, even though they may do so over a very short time
interval.

It was shown in \citet{Amiri-Dachian} that the asymptotic behavior of this
smooth change-point model depends on the rate at which $\delta_n$ tends to
zero.  If $\delta_n$ goes to zero slower than $1/n$ (that is,
$n\delta_n\to+\infty$, which we call \emph{slow regime\/}), the model is
locally asymptotically normal (though with a non-standard normalization rate
$\varphi_n=\sqrt{\delta_n/n}\,$), and the MLE and the BEs have asymptotically
Gaussian behavior and are asymptotically efficient.  If, on the contrary,
$\delta_n$ goes to zero faster than $1/n$ (that is, $n\delta_n \to 0$, which
we call \emph{fast regime\/}), the transition interval becomes ``negligible'':
the normalization rate~$1/n$ and the limiting likelihood ratio process are the
same as in the corresponding pure change-point model.  In
\citet{Amiri-Dachian}, the properties of the BEs were established in the fast
regime, but the MLE could not be studied by the classical likelihood ratio
analysis method, as the topology~$J_1$ was inappropriate (here the likelihood
ratio processes are continuous, while the limiting process is discontinuous).

The final contribution of the paper is the application of the $M_1$ version of
the likelihood ratio analysis method developed in this paper to the study of
the MLE in the fast regime of the Poissonian smooth change-point model.  We
show that if $n\delta_n \to 0$, the MLE $\hat{\theta}_n$ is consistent and
converges at rate $1/n$.  More precisely,
\[
n(\hat{\theta}_n-\theta) \Longrightarrow \eta_{a,b},
\]
where $\eta_{a,b}$ is the location of the maximum of the limiting likelihood
ratio process associated with the corresponding Poissonian pure change-point
model.  Convergence of polynomial moments also holds.  So, when the transition
interval shrinks sufficiently fast, both the MLE and the BEs have the same
asymptotic behavior as in the pure change-point model.  We believe that these
results provide a theoretical explanation of why pure change-point models have
proved successful in real-world applications, despite the fact that in
physical systems the intensity cannot change instantaneously from one level to
another.

Although the smooth change-point model for Poisson processes is the main
application presented here, the scope of the method is broader.  The
$M_1$-based likelihood ratio analysis method can be applied to other models in
which continuous normalized likelihood ratios converge to discontinuous
limits.  It may also prove useful in models with discontinuous likelihood
ratios for which the classical ($J_1$-based) method already applies, but where
the weaker topology~$M_1$ can lead to simpler proofs (this is, in particular,
the case for models with several pure change-points whose jumps are in the
same direction).

The paper is organized as follows.  In Section~\ref{Sec-Extension}, we recall
the likelihood ratio analysis method and extend it to the space
$\ED_0^{(M_1)}(\RR)$.  We establish weak convergence tools for likelihood
ratio processes and show that the asymptotic behavior of the MLE follows from
convergence in the topology~$M_1$.  In Section~\ref{Sec-outils}, we develop
auxiliary tools for the study of the $M_1$ modulus of continuity, such as the
moduli of increase and decrease, or Lipschitz growth and decay.  In
Section~\ref{Sec-EMV-rapide}, we apply the method to the smooth change-point
model for Poisson processes and prove the asymptotic properties of the MLE in
the fast regime.  We conclude by discussing in Section~\ref{Sec-Discussion}
some potential extensions, including the \emph{critical regime\/} $n\delta_n
\to c>0$, models with several smooth change-points and possible
simplifications of existing proofs for pure change-point models.

Let us finally note that a preliminary presentation (in French) of the results
of the present paper was included in the Ph.D.\ thesis~\citet{Amiri22} of one
of the authors.

\section{Extension of the likelihood ratio analysis method to the Skorokhod
  $M_1$ topology}
\label{Sec-Extension}

We start this section by recalling the likelihood ratio analysis method: a
powerful method for studying regular and non-regular parametric statistical
models introduced by \citeauthor{IbHas81}
in~\myciteyearp{IbHas70,IbHas72,IbHas72S,IbHas73,IbHas74,IbHas81}.

Consider a parametric model where we observe some $X^{(n)}$, $n\in\NN^*$, with
distribution~$\Pb_\theta^{(n)}$, depending on an unknown parameter (to be
estimated) $\theta\in\Theta=(\alpha, \beta)\subset\RR$, and with likelihood
\[
L \bigr(\theta, X^{(n)}\bigl) =
\frac{\dd\Pb_{\theta}\bigl(X^{(n)}\bigr)}{\dd\mu}\,,\qquad \theta\in\Theta,
\]
where $\mu$ is some reference measure (on the space of observations).

The first step of the likelihood ratio analysis method is the study of the
asymptotic (as~$n\to\infty$) behavior under $\Pb_{\theta}^{(n)}$ of the
\emph{(normalized) likelihood ratio (process)}
\[
Z_n(u) = Z_n^{(\theta)}(u) = \frac{L \bigr(\theta+u\varphi_n, X^{(n)}\bigl)}{L
  \bigr(\theta, X^{(n)}\bigl)}\,,\qquad u\in\UU_n,
\]
where
$\UU_n=\bigl(\varphi_n^{-1}(\alpha-\theta),\varphi_n^{-1}(\beta-\theta)\bigr)$
and $\varphi_n$ is some sequence decreasing to $0$ called \emph{likelihood
normalization rate}.  Then, the asymptotic properties (such as consistency,
rates of convergence and limiting distributions, convergence of moments and
asymptotic efficiency) of the maximum likelihood estimator (MLE) and of the
Bayesian estimators (BEs) are deduced from this behavior.

A very important ingredient of the method is the weak convergence (with the
right choice of $\varphi_n$) of the process $Z_n$ to a non-degenerate limiting
process~$Z=Z^{(\theta)}$ called \emph{limiting likelihood ratio (process)}.
Note that usually the trajectories of these processes are either continuous or
càdlàg (that is, right-continuous and having left limits everywhere).  Note
also that, since~$\UU_n\uparrow\RR$, the limiting likelihood ratio $Z$ must be
defined on the whole real line.  Therefore, the normalized likelihood ratios
$Z_n$, $n\in\NN^*$, also need to be extended to~$\RR$.  As the values of $u$
such that $\bil|u\bir|$ is large correspond to the parameter values
$\theta+u\varphi_n$ far from the true value~$\theta$, it is natural for
$Z_n(u)$ to be small for these values of~$u$.  So the process~$Z_n$ is
extended to~$\RR$ in such a way that its trajectories decrease to~$0$ when
$u\to\infty$ (staying continuous or càdlàg as appropriate).

To the best of our knowledge, the likelihood ratio analysis was developed (and
applied) using only two function spaces for the weak convergence of the
likelihood ratio.  In the case where the trajectories of the likelihood ratio
processes are continuous, the weak convergence was considered in the space
$\EC_0(\RR)$ of continuous functions on $\RR$ vanishing at~$\pm\infty$,
endowed with the uniform topology~$U$.  And in the case where the trajectories
of the likelihood ratio processes are discontinuous càdlàg functions, the weak
convergence was considered in the space $\ED_0(\RR)$ of càdlàg functions
on~$\RR$ vanishing at~$\pm\infty$, endowed with the usual Skorokhod
topology~$J_1$.

However, in some cases (namely, when the trajectories of the likelihood ratio
processes are continuous, while the trajectories of the limiting likelihood
ratio process are discontinuous càdlàg functions), the weak convergence cannot
hold in either of the above two spaces.  For this reason, we adapt the
likelihood ratio analysis method to a different topology, the Skorokhod $M_1$
topology.  This topology was introduced (for the case of càdlàg functions on a
compact interval) by \citeauthor{Sko} in his seminal paper~\myciteyearp{Sko},
where he also introduced the topology $J_1$, as well as two other related
topologies ($J_2$~and~$M_2$) that we do not discuss in this paper.

\subsection{Weak convergence in the spaces $\EC_0^{(U)}(\RR)$,
  $\ED_0^{(J_1)}(\RR)$ and $\ED_0^{(M_1)}(\RR)$}

Let us recall more precisely the spaces $\EC_0^{(U)}(\RR)$
and~$\ED_0^{(J_1)}(\RR)$, as well as introduce the space~$\ED_0^{(M_1)}(\RR)$.
Here and in the sequel, when a space $\EX$ is endowed with a topology $S$, we
use the notation $\EX^{(S)}$.

The space $\EC_0^{(U)}(\RR)$ was introduced by Ibragimov and Khasminskii and
allows to deal with the case where the trajectories of both the normalized
likelihood ratios and the limiting likelihood ratio are continuous (this is
the case, for example, in regular models or in models with cusp).  It is a
direct adaptation of the space $\EC^{(U)}\bigl([a,b]\bigr)$ of continuous
functions on a compact interval (for more details on this space we can refer
to \citet{Billingsley}) to the case of continuous functions on $\RR$ vanishing
at~$\pm\infty$.

\begin{definition}
We denote\/ $\EC_0(\RR)$ the space of continuous functions on\/ $\RR$
vanishing at\/ $\pm \infty$ and we endow it with the uniform topology\/ $U$,
induced by the uniform distance
\[
d^{(U)}(f,g) = \lVert f-g \rVert_\infty = \sup_{u\in \RR} \bil| f(u)-g(u)
\bir|, \qquad f,g\in \EC_0(\RR).
\]
\end{definition}

The modulus of continuity associated with the topology $U$ is
\[
\Delta_h^{(U)}(f) = \sup_{u,v\in \RR \::\:\bil|u-v\bir|\leq h}
\bil|f(u)-f(v)\bir| + \sup_{|u|>1/h} \bil|f(u)\bir|,
\]
where $f\in \EC_0(\RR)$ and $h>0$, and a weak convergence criterion for
processes with trajectories in $\EC_0^{(U)}(\RR)$ is given by the following
Prokhorov-type theorem.

\begin{theorem}
Let\/ $Y_n$, $n\in \NN^*$, and\/ $Y$ be stochastic process with trajectories
in\/ $\EC_0(\RR)$.  Then,\/ $Y_n$ converges weakly to\/ $Y$ in\/
$\EC_0^{(U)}(\RR)$ if and only if:
\begin{itemize}
\item the finite-dimensional distributions of\/ $Y_n$ converge to those
  of\/~$Y$;
\item for all\/ $\varepsilon>0$, we have
\[
\lim_{h\to 0} \ \mylimsup_{n\to +\infty}
\Pb\bigl(\Delta^{(U)}_h(Y_n)>\varepsilon\bigr)=0.
\]
\end{itemize}
\end{theorem}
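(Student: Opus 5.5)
The plan is to reduce to the classical Prokhorov theory in a separable complete metric space. The first step is to check that $\EC_0^{(U)}(\RR)$ is Polish. Via the homeomorphism $u\mapsto \tanh u$ (or any increasing bijection of $\RR$ onto $(-1,1)$), $\EC_0(\RR)$ is isometric to the closed subspace of $\EC^{(U)}\bigl([-1,1]\bigr)$ of functions vanishing at $\pm1$. The Borel $\sigma$-field is then generated by the coordinate projections, so finite-dimensional distributions determine a law. By Prokhorov's theorem, weak convergence is equivalent to convergence of finite-dimensional distributions together with tightness. Hence it suffices to show that, given fidi convergence, tightness is equivalent to the modulus condition.

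Next I characterize relatively compact sets by an Arzelà--Ascoli argument. A set $K\subset\EC_0(\RR)$ is relatively compact if and only if $\sup_{f\in K}\lvert f(0)\rvert<\infty$ and $\lim_{h\to0}\sup_{f\in K}\Delta_h^{(U)}(f)=0$. Indeed, the second term $\sup_{|u|>1/h}\lvert f(u)\rvert$ gives uniform vanishing at infinity. Together with equicontinuity on $[-1/h,1/h]$ this yields, after the $\tanh$ change of variable, equicontinuity on $[-1,1]$, and the classical Arzelà--Ascoli theorem applies. Conversely, every $f\in\EC_0(\RR)$ has $\Delta_h^{(U)}(f)\to0$, and on compact sets this convergence is uniform by Dini-type/finite-net arguments, because $\Delta_h^{(U)}$ is $2$-Lipschitz in $f$ for the sup norm.

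Sufficiency then follows the standard scheme of Billingsley's Theorem 7.3. Fidi convergence at $u=0$ gives tightness of $Y_n(0)$. Given $\eta>0$, choose $h_k\downarrow0$ and $n_k$ with $\Pb\bigl(\Delta^{(U)}_{h_k}(Y_n)>1/k\bigr)<\eta 2^{-k}$ for $n\geq n_k$. The finitely many $n<n_k$ are handled by shrinking $h_k$, since each single $Y_n$ has a.s.\ $\Delta_h^{(U)}(Y_n)\to0$. The set $\{f:\lvert f(0)\rvert\leq M,\ \Delta_{h_k}^{(U)}(f)\leq1/k\ \forall k\}$ is then compact by the previous step and has probability at least $1-2\eta$ for all $n$. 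Necessity comes from tightness: pick compact $K$ with $\Pb(Y_n\notin K)<\eta$. For $h$ small, $\sup_{K}\Delta_h^{(U)}<\varepsilon$, so $\Pb\bigl(\Delta_h^{(U)}(Y_n)>\varepsilon\bigr)<\eta$ for all $n$.

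The main obstacle is the compactness characterization on the non-compact line, specifically handling the behaviour at infinity. The term $\sup_{|u|>1/h}\lvert f(u)\rvert$ is exactly what replaces compactness of the domain, and I would verify carefully that it transfers to equicontinuity at $\pm1$ after compactification. The rest is routine.
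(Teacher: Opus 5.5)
Your proposal is correct and follows essentially the route the paper has in mind: the paper gives no separate proof, only remarking that (as for processes on a compact interval) the modulus condition together with tightness of $Y_n(0)$ yields tightness in $\EC_0^{(U)}(\RR)$, and hence, with fidi convergence, weak convergence, which is exactly your Billingsley-type scheme via compactification and Arzel\`a--Ascoli. The only slip is harmless: $\Delta_h^{(U)}$ is $3$-Lipschitz rather than $2$-Lipschitz in the sup norm, since the oscillation term contributes $2\lVert f-g\rVert_\infty$ and the tail term another $\lVert f-g\rVert_\infty$, and this does not affect your finite-net argument.
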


Let us note that, as in the case of processes on a compact interval, the
second condition of the above theorem (together with the weak convergence of
$Y_n(0)$) guarantees the tightness of the family of probability measures
induced by the processes $Y_n$, $n \in \NN^*$, on the
space~$\EC_0^{(U)}(\RR)$, and hence (together with the convergence of
finite-dimensional distributions) the weak convergence of $Y_n$.

The space $\ED_0^{(J_1)}(\RR)$ was also introduced by Ibragimov and
Khasminskii and allows to deal with the case where the normalized likelihood
ratios are discontinuous (this is the case, for example, in many change-point
models).  It is a direct adaptation of the space
$\ED^{(J_1)}\bigl([a,b]\bigr)$ of càdlàg functions on a compact interval to
the case of càdlàg functions on~$\RR$ vanishing at~$\pm\infty$.  Recall that
the space $\ED^{(J_1)}\bigl([a,b]\bigr)$ was introduced by \citeauthor{Sko}
in~\myciteyearp{Sko}, but we can also refer \citet{Billingsley} and
\citet{Whitt}, where the setup is slightly different: while Skorokhod
restricts the space to functions which are left-continuous at the right edge
of the interval, they do not impose this restriction, and hence need
additional hypotheses in some of their results.  Let us also note that it is
the fact that we consider only vanishing at~$\pm\infty$ functions which allows
this adaptation.  Roughly speaking, we remain in the framework of functions
defined on a compact set (a~compactification of~$\RR$): if the functions
belonging to $\ED^{(J_1)}\bigl([a,b]\bigr)$ were assumed to be continuous at
the edges (that~is, at~$a$ and~$b$), the functions belonging to
$\ED_0^{(J_1)}(\RR)$ are still assumed to be continuous (but~fixing, in
addition, the values to~$0$) at the edges (that~is, at~$-\infty$
and~$+\infty$).

\begin{definition}
\label{Def_J_1}
We denote\/ $\ED_0(\RR)$ the space of càdlàg functions on\/ $\RR$ vanishing
at\/ $\pm \infty$ and we endow it with the topology\/ $J_1$ induced by the
distance
\[
d^{(J_1)}(f,g) = \inf \biggl\{\sup_{u\in \RR} \bigl|
f(u)-g\bigl(\lambda(u)\bigr) \bigr| + \sup_{u\in \RR} |u-\lambda(u)| \biggr\},
\qquad f,g\in \ED_0(\RR),
\]
where the\/ $\inf$ is taken over all continuous one-to-one mappings\/
$\lambda$ from\/ $\RR$ to\/~$\RR$.
\end{definition}

The modulus of continuity associated with the topology $J_1$ is
\[
\Delta_h^{(J_1)}(f) = \sup_{u,u',u''\in \RR \: : \: u-h\leq u'\leq u\leq
  u''\leq u+h} \min \bigl\{ \bil|f(u)-f(u')\bir|, \bil|f(u)-f(u'')\bir|
\bigr\} + \sup_{|u|>1/h} \bil|f(u)\bir|,
\]
where $f\in \ED_0(\RR)$ and $h>0$, and a weak convergence criterion for
processes with trajectories in $\ED^{(J_1)}_0(\RR)$ is given by the following
Prokhorov-type theorem.

\begin{theorem}
\label{TconvJ1}
Let\/ $Y_n$, $n\in \NN^*$, and\/ $Y$ be stochastic process with trajectories
in\/ $\ED_0(\RR)$.  Then,\/ $Y_n$ converges weakly to\/ $Y$ in\/
$\ED_0^{(J_1)}(\RR)$ if and only if:
\begin{itemize}
\item the finite-dimensional distributions of\/ $Y_n$ converge to those of\/
  $Y$ on a dense subset\/~$T$ of\/~$\RR$;
\item for all\/ $\varepsilon>0$, we have
\[
\lim_{h\to 0} \ \mylimsup_{n\to +\infty}
\Pb\bigl(\Delta^{(J_1)}_h(Y_n)>\varepsilon\bigr)=0.
\]
\end{itemize}
\end{theorem}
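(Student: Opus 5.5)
The approach is to reduce Theorem~\ref{TconvJ1} to the classical criterion on a compact interval. We transport everything to $[-1,1]$ by a homeomorphism, apply the standard Skorokhod criterion for $\ED^{(J_1)}\bigl([-1,1]\bigr)$ (as in \citet{Billingsley}, Theorem~13.2 and its corollaries, or \citet{Sko}), and check that the moduli and the finite-dimensional conditions match.

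\textbf{Step 1: compactification.} Let $\psi(u)=u/(1+|u|)$, a homeomorphism from $\RR$ onto $(-1,1)$. To each $f\in\ED_0(\RR)$ associate $\tilde f=f\circ\psi^{-1}$ on $(-1,1)$, extended by $\tilde f(\pm1)=0$. Then $\tilde f$ is càdlàg on $[-1,1]$ and continuous at $\pm1$. The image of $\ED_0(\RR)$ is therefore the subspace $\tilde\ED$ of $\ED\bigl([-1,1]\bigr)$ consisting of functions vanishing and continuous at both endpoints. This subspace is closed in $J_1$, since a $J_1$ limit of such functions is again continuous at the endpoints with value $0$.

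We then check that $f_n\to f$ in $d^{(J_1)}$ if and only if $\tilde f_n\to\tilde f$ in the usual $J_1$ metric on $[-1,1]$. The time changes $\lambda$ on $\RR$ correspond to time changes on $[-1,1]$ fixing $\pm1$. Because the functions are small near $\pm\infty$, a time change that is uniformly close to the identity on a compact set can be modified outside it at negligible cost in the sup term. This gives a homeomorphism of the two spaces, so weak convergence, tightness and finite-dimensional distributions transfer. The dense set $T\subset\RR$ maps to a dense set $\psi(T)\cup\{-1,1\}$ in $[-1,1]$, and the endpoint values are automatically $0$.

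\textbf{Step 2: matching the moduli.} The classical criterion on $[-1,1]$ with continuity at the endpoints requires three things: convergence of the finite-dimensional distributions on a dense set containing the endpoints, and, for every $\varepsilon>0$,
\[
\lim_{h\to0}\mylimsup_{n\to+\infty}\Pb\bigl(w''_h(\tilde Y_n)>\varepsilon\bigr)=0,
\]
together with the same condition for the oscillations of $\tilde Y_n$ near $\pm1$. Here $w''$ is Billingsley's three-point modulus. Tightness of $\tilde Y_n(0)$ is also needed, but it follows from the finite-dimensional convergence. We show that $\Delta^{(J_1)}_h(Y_n)$ controls all three parts.
\begin{itemize}
\item On $\{|u|\le 1/h\}$ the map $\psi$ is bi-Lipschitz with constants depending on $h$. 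Hence the three-point modulus of $\tilde Y_n$ on $\psi([-1/h,1/h])$ at scale $h'$ is bounded by the three-point term in $\Delta^{(J_1)}_{h''}(Y_n)$ for a suitable $h''(h,h')$.
\item Near $\pm1$, the tail term $\sup_{|u|>1/h}|Y_n(u)|$ bounds the oscillation, since $|\tilde Y_n|\le\varepsilon$ there and the endpoint value is $0$.
\end{itemize}
For the converse direction we argue the same way: $J_1$ tightness on the compact interval, together with continuity of the limit at $\pm1$, gives the vanishing of both parts of $\Delta^{(J_1)}_h$.

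\textbf{Main obstacle.} The delicate point is the uniformity near infinity. The three-point supremum in $\Delta^{(J_1)}_h$ runs over all of $\RR$, while the bi-Lipschitz comparison only holds on compacts. I would handle this by splitting $\RR$ into $\{|u|\le 2/h\}$ and its complement. On the complement, the three-point term is at most $2\sup_{|u|>1/h}|Y_n(u)|$, which is already controlled by the tail term. A second, smaller issue is that Billingsley's space does not impose left-continuity at the right endpoint. Since all our functions are continuous at $\pm1$, his extra endpoint hypotheses are exactly the tail condition, so the criterion applies.
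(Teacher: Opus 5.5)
The paper gives no proof of this statement. It recalls it as the classical Ibragimov--Khasminskii version of Skorokhod's criterion, and supports it only by the remark that $\ED_0(\RR)$ amounts to càdlàg functions on a compactification of $\RR$ that are continuous and vanish at the two added endpoints. Your proposal is a correct, rigorous rendering of that remark. The transfer through $\psi$ works: one direction is immediate since $\psi$ is $1$-Lipschitz, and the other follows by straightening the time changes to the identity outside a large compact, at a cost controlled by the tails. Splitting at $|u|=2/h$ correctly handles the three-point term near infinity.

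Two statements in your Step~2 should be corrected, although neither breaks the argument.

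First, in $\ED$ the three-point modulus, the endpoint conditions and tightness of the single marginal $\tilde Y_n(0)$ do \emph{not} yield tightness. Take $\tilde x_n=n\,\ind_{[1/4,1/2)}$ on $[-1,1]$: then $w''_\delta(\tilde x_n)=0$ for $\delta<1/4$, there is no oscillation near $\pm1$, and $\tilde x_n(0)=0$, yet $\sup|\tilde x_n|=n$. What is needed is tightness of $\tilde Y_n(t)$ for every $t$ in a dense set, which is the dense-set variant of Billingsley's boundedness condition. Convergence of the finite-dimensional distributions on $T$ gives exactly this; note also that $0$ need not lie in $T$.

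Second, Billingsley's Theorem~13.1 identifies the limit from convergence of the finite-dimensional distributions on $T_P$, the continuity set of the limit, not on an arbitrary dense set. So either rely on Skorokhod's own formulation, which is stated for an arbitrary dense set containing the endpoints, or add the identification step yourself:
\begin{itemize}
\item if a subsequence converges to some $W$, take an almost sure Skorokhod representation $X_{n'}\to W$;
\item for $t<s$ in $T$, the law of $(X_{n'}(t),X_{n'}(s))$ converges to that of $(Y(t),Y(s))$, and $\Pb(|Y(t)-Y(s)|\geq\varepsilon)\to0$ as $s\downarrow t$ with $s\in T$;
\item together with the right-continuity of $W$, this forces $X_{n'}(t)\to W(t)$ in probability;
\item hence $W$ and $Y$ have the same finite-dimensional distributions on $T$, and therefore the same law.
\end{itemize}
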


As we have already said, up to the best of our knowledge, until now the
likelihood ratio analysis method was only developed and applied using either
the space $\EC_0^{(U)}(\RR)$, or the space $\ED_0^{(J_1)}(\RR)$.  The space
$\EC_0^{(U)}(\RR)$ was used, for example, for regular and cusp i.i.d.\ models
by \citeauthor{IbHas81} in~\myciteyearp{IbHas70, IbHas72S, IbHas73, IbHas74,
  IbHas81}, for regular Poissonian models by \citeauthor{Kut98}
in~\myciteyearp{Kut77,Kut79,Kut84,Kut98}, and for cusp Poissonian models by
\citeauthor{D03S} in~\myciteyearp{D03S}.  As to the space
$\ED^{(J_1)}_0(\RR)$, it has been used, for example, for i.i.d.\ change-point
models by \citeauthor{IbHas81} in~\myciteyearp{IbHas70,IbHas72,IbHas81}, and
for Poissonian change-point models by \citeauthor{Kut98}
in~\myciteyearp{Kut78,Kut84,Kut98}.  Note that these spaces have also been
used for a multitude of other observation models.

However, neither of these spaces can be used in a situation where the
trajectories of the likelihood ratio processes $Z_n$, $n\in\NN^*$, are
continuous, while the ones of the limiting likelihood ratio process $Z$ are
discontinuous (for example, the fast regime of the Poissonian smooth
change-point model already mentioned in the introduction and studied below in
Section~\ref{Sec-EMV-rapide}).  Indeed, $\EC^{(U)}_0(\RR)$ cannot be used
since the trajectories of~$Z$ do not belong to this space, while
$\ED^{(J_1)}_0(\RR)$ cannot be used since the topology~$J_1$ does not allow
the convergence of continuous functions to discontinuous limits.  To overcome
this problem, we endow the space $\ED_0(\RR)$ with the topology $M_1$ which,
in contrary to $J_1$, allows such convergence.

So, below we introduce the space $\ED^{(M_1)}_0(\RR)$, which is a direct
adaptation of the space $\ED^{(M_1)}\bigl([a,b]\bigr)$ of càdlàg functions on
a compact interval (for more details on this space we refer to \citet{Sko} and
\citet{Whitt}) to the case of càdlàg functions on $\RR$ vanishing
at~$\pm\infty$.  As in the case of functions on a compact interval, in order
to define the topology~$M_1$, we need the notions of the graph of a function
$f\in\ED_0(\RR)$ and of its parametric representation.
\begin{definition}
Let\/ $f\in\ED_0(\RR)$.
\begin{enumerate}
\item The graph of\/ $f$, noted\/ $\Gamma_f$, is the closed subset of\/ $\RR^2$
  given by
\[
\Gamma_f = \bigl\{ (u,v)\in \RR^2 \: ; \quad u \in \RR,\ v\in [ f(u-),f(u)
  ] \bigr\}.
\]
\item Let\/ $(u_1,v_1),(u_2,v_2)\in \Gamma_f$.  We say that\/ $(u_1,v_1)\leq
  (u_2,v_2)$ if either we have\/ $u_1<u_2$, or we have\/ $u_1=u_2$ and
  $\bil|f(u_1-)-v_1 \bir|\leq \bil|f(u_1-)-v_2 \bir|$.
\item A parametric representation of the graph\/ $\Gamma_f$ is given by a
  pair\/ $(t,y)$ of continuous functions on\/~$\RR$, such that for all\/
  $(u,v)\in \Gamma_f$, there exists\/ $s\in \RR$ verifying\/ $u=t(s)$ and\/
  $v=y(s)$, and\/ $s\mapsto\bigl(t(s),y(s)\bigr)$ is an increasing function in
  the sense of the above introduced order, that is,\/
  $\bigl(t(s),y(s)\bigr)\leq\bigl(t(s'),y(s')\bigr)$ as soon as\/ $s\leq s'$.
\end{enumerate}
\end{definition}

We can now define the topology $M_1$ on~$\ED_0(\RR)$.

\begin{definition}
\label{Def_M_1}
The topology\/ $M_1$ on\/ $\ED_0(\RR)$ is the topology induced by the distance
\[
d^{(M_1)}(f,g)=\inf \: \: \sup_{s\in \RR} \: \: d_1\Bigl(
\bigl(t_1(s),y_1(s)\bigr) ; \bigl(t_2(s),y_2(s)\bigr) \Bigr),\qquad f,g\in
\ED_0(\RR),
\]
where the\/ $\inf$ is taken over all pairs of parametric representations\/
$(t_1,y_1)$ of\/ $\Gamma_f$ and\/ $(t_2,y_2)$ of\/~$\Gamma_g$.
\end{definition}

Here, following the original Skorokhod's definition, we use the
\emph{Manhattan distance\/} $d_1\bigl((x,y);(u,v)\bigr) = \bil|x-u\bir| +
\bil|y-v\bir|$ on~$\RR^2$, but a definition using an arbitrary distance
on~$\RR^2$ would be of course equivalent.

To illustrate the difference between the topologies $J_1$ and~$M_1$, consider
two functions $f_\delta$ and~$g$ whose graphs are represented in
Figure~\ref{fig-illust} by continuous and dashed lines, respectively, with
$\Delta > 0$ fixed and $\delta \searrow 0$.  Then $d^{(J_1)}(f_\delta,g) =
\Delta/2+\delta/2 \to \Delta/2 > 0$ (note that here the mappings $\lambda$
attaining the infimum in Definition~\ref{Def_J_1} are such that $\lambda(a) =
a+\delta/2$), while~$d^{(M_1)}(f_\delta,g) = \delta \to 0$ (note that here the
pairs of parametric representations attaining the infimum in
Definition~\ref{Def_M_1} are such that the points $A_0$ and $B_0$ correspond
to a same value of the parameter~$s$).

\begin{figure}[!ht]
\centering
\includegraphics[scale=0.35]{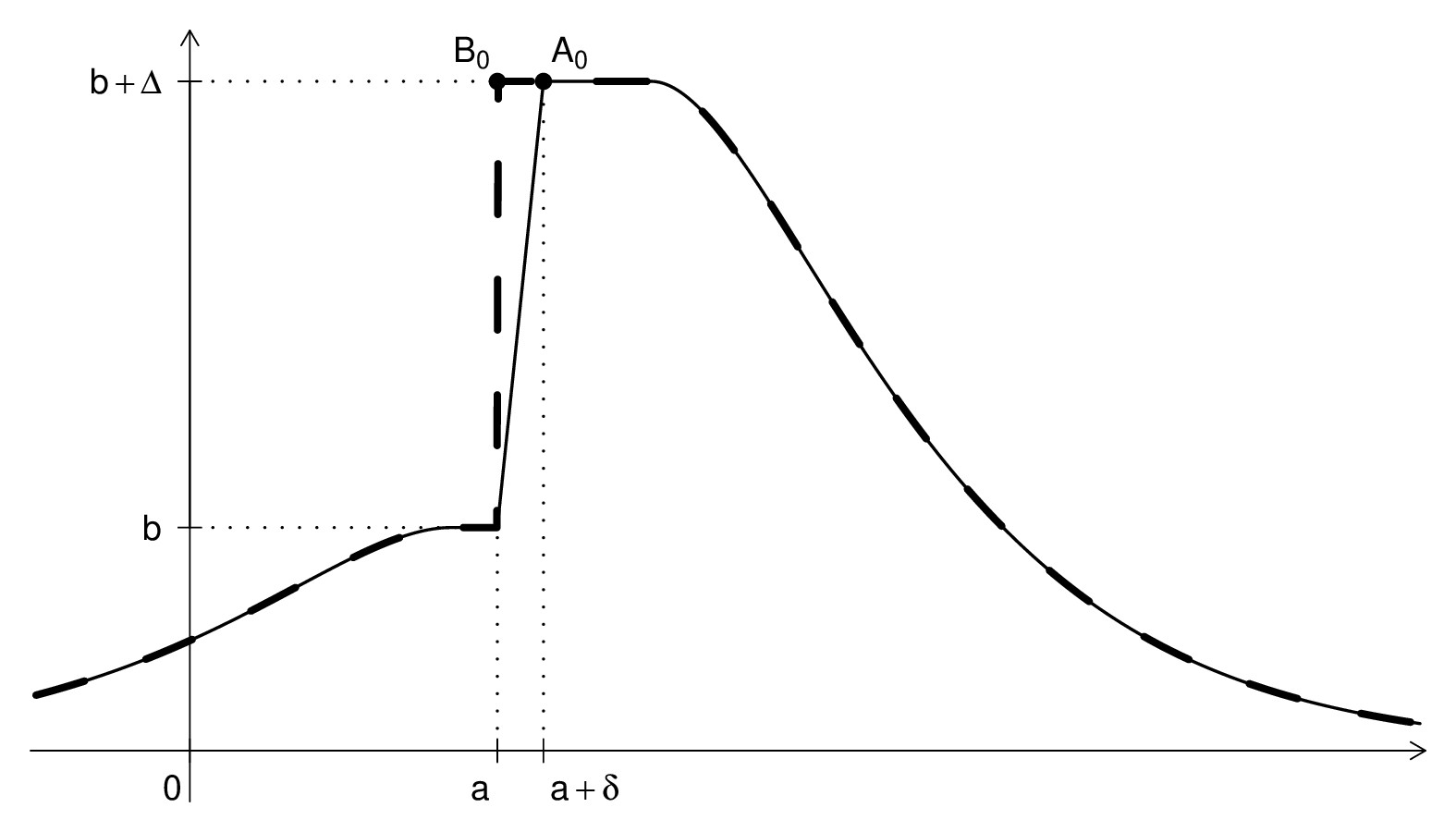}
\caption{difference between the convergences in $J_1$ and in $M_1$}
\label{fig-illust}
\end{figure}

The modulus of continuity associated with the topology $M_1$ is
\begin{equation}
\label{MC0}
\Delta_h^{(M_1)}(f)=\sup_{u,u',u''\in \RR \: : \: u-h\leq u'\leq u\leq u''\leq
  u+h} d\bigl( f(u);[f(u'), f(u'')] \bigr) + \sup_{|u|>1/h} \bil|f(u)\bir|,
\end{equation}
where $f\in \ED_0(\RR)$ and $h>0$.  Here and in the sequel, for $a,b,c \in
\RR$, we denote $d(c;[a,b])$ the distance between $c$ and the interval $[a,b]$
(or~$[b,a]$, if~$b<a$).  Finally, a weak convergence criterion for processes
with trajectories in $\ED^{(M_1)}_0(\RR)$ is given by the following
Prokhorov-type theorem.

\begin{theorem}
Let\/ $Y_n$, $n\in \NN^*$, and\/ $Y$ be stochastic process with trajectories
in\/ $\ED_0(\RR)$.  Then,\/ $Y_n$ converges weakly to\/ $Y$ in\/
$\ED_0^{(M_1)}(\RR)$ if and only if:
\begin{itemize}
\item the finite-dimensional distributions of\/ $Y_n$ converge to those of\/
  $Y$ on a dense subset\/~$T$ of\/~$\RR$;
\item for all\/ $\varepsilon>0$, we have
\[
\lim_{h\to 0} \ \mylimsup_{n\to +\infty}
\Pb\bigl(\Delta^{(M_1)}_h(Y_n)>\varepsilon\bigr)=0.
\]
\end{itemize}
\end{theorem}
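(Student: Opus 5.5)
My plan is to reduce the statement to the classical weak convergence criterion in $\ED^{(M_1)}\bigl([a,b]\bigr)$ on a compact interval (Skorokhod; see \citet{Whitt}). The reduction uses a compactification of $\RR$ that sends $\pm\infty$ to the endpoints. Fix an increasing homeomorphism $\psi:[-1,1]\to[-\infty,+\infty]$, for instance $\psi(x)=x/(1-x^2)$. To $f\in\ED_0(\RR)$ associate $\tilde f=f\circ\psi$ on $(-1,1)$, extended by $\tilde f(\pm1)=0$. Since $f$ vanishes at $\pm\infty$, $\tilde f$ is càdlàg on $[-1,1]$ and continuous at both endpoints. First I will check that $f\mapsto\tilde f$ is a homeomorphism from $\ED_0^{(M_1)}(\RR)$ onto the closed subspace of $\ED^{(M_1)}\bigl([-1,1]\bigr)$ consisting of functions vanishing at $\pm1$. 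Parametric representations of $\Gamma_f$ and $\Gamma_{\tilde f}$ correspond under $(t,y)\mapsto(\psi^{-1}\circ t,y)$, after reparametrizing $s$ over a compact interval. Convergence in $d^{(M_1)}$ on $\RR$ then gives $M_1$ convergence of the transforms, because $\psi^{-1}$ is uniformly continuous. For the converse, the vanishing at the endpoints controls the behaviour near $\pm\infty$, and on compacts $\psi$ is uniformly continuous. Closedness holds because $M_1$ convergence implies convergence at the endpoints $\pm1$, where the limit is continuous. Hence weak convergence of $Y_n$ in $\ED_0^{(M_1)}(\RR)$ is equivalent to weak convergence of $\tilde Y_n$ in $\ED^{(M_1)}\bigl([-1,1]\bigr)$.

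Next I will compare the moduli. On $[-1,1]$ the classical criterion uses the oscillation modulus $w_{M_1}(\tilde f,\delta)=\sup d\bigl(\tilde f(x);[\tilde f(x'),\tilde f(x'')]\bigr)$ over $x-\delta\le x'\le x\le x''\le x+\delta$, together with the edge oscillations $\sup_{x\in[-1,-1+\delta]}|\tilde f(x)-\tilde f(-1)|$ and the analogous one at $+1$. The vanishing condition turns the edge terms into exactly $\sup_{|u|>R(\delta)}|f(u)|$. For the interior term, a triple at distance $\le\delta$ in $x$ lying inside $[-1+\delta',1-\delta']$ corresponds to a triple at distance $\le C(\delta')\delta$ in $u$. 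A triple reaching into the edge region is controlled by the tail supremum, since $d(c;[a,b])\le|c|+\min(|a|,|b|)$ is small when the values are small. Combining these gives two-sided inequalities of the form $\tilde\Delta_{\delta}(\tilde f)\le 3\Delta^{(M_1)}_{h(\delta)}(f)$ and conversely, with $h(\delta)\to0$ as $\delta\to0$. Therefore the condition $\lim_h\mylimsup_n\Pb(\Delta_h^{(M_1)}(Y_n)>\varepsilon)=0$ is equivalent to the corresponding condition for $\tilde Y_n$.

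Finally I apply the compact-interval theorem: weak convergence in $\ED^{(M_1)}\bigl([-1,1]\bigr)$ holds if and only if finite-dimensional distributions converge on a dense set containing the endpoints and the modulus condition holds. The endpoint values are identically $0$, so convergence there is automatic. The dense set $T\subset\RR$ maps to the dense set $\psi^{-1}(T)\cup\{\pm1\}$. The necessity direction of the fidi condition needs care, because projections are $M_1$-continuous only at continuity points of the limit. I will handle this as in Billingsley: the set of $u$ with $\Pb\bigl(Y(u)\neq Y(u-)\bigr)>0$ is countable, so fidi convergence holds on its complement, which is dense. For tightness, the modulus condition bounds $\sup|Y_n|$ in probability once the fidi at one point and the tail term are controlled. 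The main obstacle I expect is the careful verification that the homeomorphism preserves $M_1$ convergence near $\pm\infty$, where $\psi$ is not uniformly continuous. This is where the vanishing condition must be used, through the tail term of~\eqref{MC0}.
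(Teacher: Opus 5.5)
Your compactification argument is correct and follows essentially the paper's own route. The paper gives no separate proof: it treats $\ED_0^{(M_1)}(\RR)$ as a direct adaptation of $\ED^{(M_1)}\bigl([a,b]\bigr)$, viewing functions vanishing at $\pm\infty$ as functions on a compactification of $\RR$ that are continuous and zero at the edges, and your map $f\mapsto f\circ\psi$ makes exactly this rigorous. There are three slips to fix, none affecting the overall argument:
\begin{enumerate}
\item Your tightness remark fails for the $M_1$ modulus: boundedness in probability of $\sup_u|Y_n(u)|$ needs fidi control on a finite grid of mesh at most $h$ in $[-1/h,1/h]$, not at a single point. For example, the deterministic $Y_n=n\ind_{[a,b)}$ has $\Delta^{(M_1)}_h(Y_n)=0$ for all small $h$ and vanishes off $[a,b)$, yet $\sup_u Y_n(u)=n$. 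The compact-interval theorem already handles this, so you do not need the remark.
\item The interior/edge split of triples must be made according to the middle point $x$. If only $x''$ enters the edge region, $\tilde f(x)$ need not be small, so the bound $d(c;[a,b])\le|c|+\min(|a|,|b|)$ gives nothing. If instead $x$ is interior, $\psi$ is Lipschitz on a slightly larger compact interval, which handles that case. For the reverse comparison no split is needed, since $\psi^{-1}$ is globally $1$-Lipschitz.
\item Closedness of the image is not needed, because the limit law is carried by it. This is fortunate: closedness fails for Whitt's version of $\ED\bigl([-1,1]\bigr)$, where $M_1$ limits with $g(1-)\neq g(1)=0$ occur.
\end{enumerate}
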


Note that as the set $T$ in the first condition (here, as well as in
Theorem~\ref{TconvJ1} above) we can take the set
\[
T_Y=\bigl\{u\in\RR \: : \: \Pb\bigl(Y(u-)\ne Y(u)\bigr)=0\bigr\},
\]
which is not only a dense subset of $\RR$, but has an (at most) countable
complement.

Let us also recall that the ``supremum over a (finite or infinite) interval''
functionals are continuous in all the considered function spaces (as well as
in the spaces $\ED^{(J_2)}_0(\RR)$ and~$\ED^{(M_2)}_0(\RR)$ which we do not
consider here), under the condition that the limiting function is continuous
at the endpoints of the interval.  For example, a particular continuous
mapping result that we will need in case of the space $\ED^{(M_1)}_0(\RR)$ is
the following (here and in the sequel, ``$\Longrightarrow$'' denotes the
convergence in distribution).

\begin{proposition}
\label{ContSup}
Let\/ $Y_n$, $n\in \NN^*$, and\/ $Y$ be stochastic process with trajectories
in\/ $\ED_0(\RR)$ such that\/ $Y_n$ converges weakly to\/ $Y$ in\/
$\ED_0^{(M_1)}(\RR)$, and let\/ $x\in T_Y$.  Then
\[
\sup_{u\leq x} Y_n(u) - \sup_{u>x} Y_n(u) \Longrightarrow \sup_{u\leq x} Y(u)
-\sup_{u>x} Y(u).
\]
\end{proposition}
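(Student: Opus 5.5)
The plan is to apply the continuous mapping theorem to the functional
\[
\Phi_x(f) = \sup_{u\leq x} f(u) - \sup_{u>x} f(u),\qquad f\in\ED_0(\RR).
\]
Both suprema are finite, because a càdlàg function vanishing at $\pm\infty$ is bounded. Also $\Phi_x$ is measurable, since the suprema may be taken over rationals together with $x$ (right-continuity). It then suffices to show that $\Phi_x$ is continuous in $\ED_0^{(M_1)}(\RR)$ at every $f$ that is continuous at~$x$. Indeed, $x\in T_Y$ means $\Pb\bigl(Y(x-)\ne Y(x)\bigr)=0$, so almost every trajectory of $Y$ is a continuity point of~$\Phi_x$. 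The continuous mapping theorem then gives $\Phi_x(Y_n)\Longrightarrow\Phi_x(Y)$, which is exactly the statement of Proposition~\ref{ContSup}.

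\textbf{Continuity of $\Phi_x$ at $f$.} Fix $f\in\ED_0(\RR)$ continuous at~$x$ and $g_k\to f$ in~$M_1$. Set $\varepsilon_k = d^{(M_1)}(g_k,f)+1/k$. By definition, there are parametric representations $(t_1,y_1)$ of $\Gamma_f$ and $(t_2,y_2)$ of $\Gamma_{g_k}$ with
\[
\sup_s \bigl(\bil|t_1(s)-t_2(s)\bir| + \bil|y_1(s)-y_2(s)\bir|\bigr) < \varepsilon_k .
\]
The key observation is that the supremum of a function over a half-line equals the supremum of $y$ over the parameters $s$ with $t(s)$ in that half-line. This holds because the completed graph only adds segments between $f(u-)$ and $f(u)$, so it does not change suprema; for the open half-line $(x,+\infty)$ one takes $t(s)>x$, with no extra point at $u=x$ appearing.

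\textbf{Upper bound.} For each $s$ with $t_2(s)\leq x$ we have $t_1(s)<x+\varepsilon_k$. Hence
\[
\sup_{u\leq x} g_k(u)\leq \sup_{u< x+\varepsilon_k} f(u)+\varepsilon_k .
\]
Conversely, every point of $\Gamma_f$ with abscissa $\leq x$ is matched by some $s$ with $t_2(s)\leq x+\varepsilon_k$. Hence
\[
\sup_{u\leq x} f(u)\leq \sup_{u\leq x+\varepsilon_k}g_k(u)+\varepsilon_k .
\]
This alone only compares suprema up to a shift of the boundary, which is where the main obstacle lies.

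\textbf{Main obstacle: the shifted boundary.} To handle it, I would work directly in parameter space rather than with the shifted suprema. Let $s^*$ be a parameter value such that $t_1(s)\le x$ for $s<s^*$ and $t_1(s)\ge x$ for $s>s^*$. Since $f$ is continuous at $x$, the only point of $\Gamma_f$ above $x$ is $(x,f(x))$. Consider any $s$ with $t_2(s)\leq x$ but $t_1(s)>x$. By monotonicity of $t_2$, every $s'$ between $s^*$ and $s$ satisfies $t_1(s')\in[x,x+\varepsilon_k)$. The corresponding $y_1$-values lie within $\omega_f(\varepsilon_k)$ of $f(x)$, where
\[
\omega_f(r)=\sup_{x\le u< x+r}\bil|f(u)-f(x)\bir|
\]
tends to $0$ by right-continuity. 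The symmetric statement uses left-continuity at~$x$. Consequently, the sets $\{s: t_1(s)\le x\}$ and $\{s: t_2(s)\le x\}$ differ only on parameters where both $y_1$ and $y_2$ are within $\omega'_f(\varepsilon_k)+\varepsilon_k$ of $f(x)$; here $\omega'_f$ is the two-sided modulus at $x$, which tends to $0$ by continuity of $f$ at~$x$. The same holds for the complements $\{t_i(s)>x\}$. Both suprema of $g_k$ are therefore within $\omega'_f(\varepsilon_k)+2\varepsilon_k$ of the corresponding suprema of $f$. It follows that $\Phi_x(g_k)\to\Phi_x(f)$, and the continuous mapping theorem concludes the proof.
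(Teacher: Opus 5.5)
Your proposal is correct and follows the route the paper intends. The paper gives no separate proof: it simply recalls that suprema over half-lines are $M_1$-continuous at functions continuous at the endpoint $x$, so the continuous mapping theorem applies since $x\in T_Y$, and your parametric-representation argument supplies exactly this continuity.

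The ``shifted boundary'' is in fact harmless. The two-sided bounds $\sup_{u\le x-\varepsilon_k}f(u)-\varepsilon_k\le\sup_{u\le x}g_k(u)\le\sup_{u<x+\varepsilon_k}f(u)+\varepsilon_k$, and their analogues for $u>x$, already converge to the right limits by continuity of $f$ at $x$, so you can skip the parameter-space step. Two small points there if you keep it: it is the monotonicity of $t_1$ (not $t_2$) that is used, and you should also note that $g_k(x)\to f(x)$ to turn the symmetric-difference observation into closeness of the suprema.
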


Note finally that the likelihood ratio method often allows to obtain the
properties of the estimators uniformly with respect to $\theta \in \KK$, where
$\KK$ is an arbitrary compact subset of~$\theta$.  For this, the weak
convergence of the likelihood ratios $Z_n=Z_n^{(\theta)}$ to the limiting
likelihood ratio $Z=Z^{(\theta)}$ must be established uniformly with respect
to~$\theta\in \KK$, and so we need some ``uniform versions'' of the weak
convergence criteria given above.  For the case of the
space~$\ED^{(M_1)}_0(\RR)$, one such version can be stated as follows.

\begin{theorem}
\label{conv-faible-M1}
Let\/ $\Sigma$ be a set, and let\/ $Y_n^{(\varsigma)}$ and\/
$Y^{(\varsigma)}$, $n\in \NN^*$ and\/ $\varsigma\in\Sigma$, be stochastic
process with trajectories in\/ $\ED_0(\RR)$.  We suppose that:
\begin{itemize}
\item the finite-dimensional distributions of\/ $Y_n^{(\varsigma)}$ converge
  to those of\/ $Y^{(\varsigma)}$ uniformly with respect to\/
  $\varsigma\in\Sigma$;
\item for all\/ $\varepsilon>0$, we have
\[
\lim_{h\to 0} \ \mylimsup_{n\to +\infty}\ \sup_{\varsigma\in\KK}
\Pb\Bigl(\Delta^{(M_1)}_h\bigl(Y_n^{(\varsigma)}\bigr)>\varepsilon\Bigr)=0.
\]
\end{itemize}
Then,\/ $Y_n^{(\varsigma)}$ converges weakly to\/ $Y^{(\varsigma)}$ in\/
$\ED^{(M_1)}_0(\RR)$ uniformly with respect to\/ $\varsigma\in\Sigma$.
\end{theorem}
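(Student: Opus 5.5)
The plan is to argue by contradiction, reducing the uniform statement to the non-uniform criterion for $\ED_0^{(M_1)}(\RR)$ stated just above. We read ``uniformly with respect to $\varsigma\in\Sigma$'' as
\[
\lim_{n\to+\infty}\sup_{\varsigma\in\Sigma}\Bigl|\Ex\Phi\bigl(Y_n^{(\varsigma)}\bigr)-\Ex\Phi\bigl(Y^{(\varsigma)}\bigr)\Bigr|=0
\]
for every bounded $M_1$-continuous functional $\Phi$. Fidi convergence is read the same way, with $\Phi$ depending on finitely many coordinates. We take the supremum in the second condition over $\Sigma$, the $\KK$ there being evidently a typo.

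Suppose the conclusion fails. Then there exist $\Phi$, $\varepsilon>0$, indices $n_k\to\infty$ and $\varsigma_k\in\Sigma$ such that
\[
\bigl|\Ex\Phi\bigl(Y_{n_k}^{(\varsigma_k)}\bigr)-\Ex\Phi\bigl(Y^{(\varsigma_k)}\bigr)\bigr|>\varepsilon .
\]
Write $X_k=Y_{n_k}^{(\varsigma_k)}$ and $W_k=Y^{(\varsigma_k)}$. We will show that, along a further subsequence, both $X_k$ and $W_k$ converge weakly in $\ED_0^{(M_1)}(\RR)$ to the same limit $Y^*$, which is a contradiction.

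\emph{Step 1: tightness of $(X_k)$.} The modulus condition, taken along the diagonal sequence, gives
\[
\lim_{h\to0}\mylimsup_{k}\Pb\bigl(\Delta_h^{(M_1)}(X_k)>\varepsilon\bigr)=0 .
\]
Together with tightness of the one-dimensional marginals, the Prokhorov-type characterization of compactness in $\ED_0^{(M_1)}(\RR)$ then yields relative compactness. So a subsequence converges weakly to some $Y^*$ with trajectories in $\ED_0(\RR)$, and the non-uniform theorem identifies its fidi on $T_{Y^*}$.

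\emph{Step 2: fidi of $W_k$.} Uniform fidi convergence makes the laws of $\bigl(W_k(u_1),\dots,W_k(u_m)\bigr)$ and $\bigl(X_k(u_1),\dots,X_k(u_m)\bigr)$ asymptotically equal for any fixed $u_1,\dots,u_m$. Hence the fidi of $W_k$ converge to those of $Y^*$ on $T_{Y^*}$.

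\emph{Step 3: modulus of $W_k$.} We transfer the modulus bound to $W_k$ via a grid discretization. For a finite grid $G\subset\RR$, let $\Delta_h^{G}$ denote the modulus~\eqref{MC0} with $u,u',u''$ and the tail points restricted to $G$. It is a continuous function of finitely many coordinates, so fidi convergence (after smoothing the indicator) gives
\[
\Pb\bigl(\Delta_h^{G}(W_k)>\varepsilon\bigr)\le\Pb\bigl(\Delta_h^{G}(X_k)>\varepsilon/2\bigr)+o(1)\le\Pb\bigl(\Delta_h^{(M_1)}(X_k)>\varepsilon/2\bigr)+o(1).
\]
Letting $G$ increase to a countable dense set and using right-continuity of càdlàg paths, $\Delta_h^{(M_1)}(W)$ is dominated by $\lim_G\Delta_{2h}^{G}(W)$. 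Monotone limits of events then give
\[
\lim_{h\to0}\mylimsup_k\Pb\bigl(\Delta_h^{(M_1)}(W_k)>\varepsilon\bigr)=0 .
\]
By the non-uniform theorem, $W_k\Longrightarrow Y^*$. Since $\Phi$ is bounded and continuous, $\Ex\Phi(X_k)-\Ex\Phi(W_k)\to0$, which is the desired contradiction.

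The main obstacle is the tightness of the one-dimensional marginals needed in Steps 1 and 3. Uniform fidi convergence only transfers tightness between $X_k$ and $W_k$; it does not create it. The $M_1$ modulus controls the tails $\sup_{|u|>1/h}$ but not the size of jumps on compacts. The first attempt would be to obtain it from the fidi hypothesis applied to a fixed point, for instance $u=0$. In the likelihood ratio setting this is automatic, since the processes are nonnegative with $\Ex Z(u)\le1$, and Markov's inequality gives uniform tightness. In general, the hypothesis should be understood to include uniform tightness of the marginals of $Y^{(\varsigma)}$, for example via convergence in a metric such as the bounded-Lipschitz one. The grid-approximation argument in Step 3 is routine but needs care at the endpoints $u\pm h$, which is why the modulus is enlarged there from $h$ to $2h$.
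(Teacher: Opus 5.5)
The paper states this theorem without proof, so there is no authors' argument to compare with. Your subsequence-and-contradiction scheme is the standard route, and Steps~1--2 are sound once marginal tightness is available. Your caveat about that tightness is not a technicality: with uniformity over all bounded $M_1$-continuous $\Phi$, the statement as written is false without it. Take $\Sigma=\NN^*$, $Y^{(\varsigma)}=\varsigma\ind_{[0,1)}$ and $Y_n^{(\varsigma)}=\varsigma\ind_{[0,1-1/n)}$ (deterministic).
\begin{itemize}
\item At any fixed finite set of points the two coincide for $n$ large, for all $\varsigma$ at once.
\item $\Delta^{(M_1)}_h\bigl(Y_n^{(\varsigma)}\bigr)=0$ for $h<1/3$ and $n\geq 3$.
\item The sup norm is $M_1$-continuous, so $\{\varsigma\ind_{[0,1)}\}$ and $\{\varsigma\ind_{[0,1-1/\varsigma)}\}$ are disjoint closed sets. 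A Urysohn functional $\Phi$ equal to $0$ on the first and $1$ on the second then gives $\Phi\bigl(Y_n^{(n)}\bigr)-\Phi\bigl(Y^{(n)}\bigr)=1$ for every $n$.
\end{itemize}
So an extra hypothesis such as uniform tightness of $\{Y^{(\varsigma)}(u)\}_{\varsigma\in\Sigma}$ for each $u$ is genuinely needed; it is automatic in the likelihood-ratio application, where $\Ex Z(u)\leq 1$. Note that bounded-Lipschitz uniformity does not \emph{supply} tightness. It weakens the conclusion instead: in this example the $M_1$ distance between $Y_n^{(\varsigma)}$ and $Y^{(\varsigma)}$ is at most $1/n$ for every $\varsigma$.

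The gap you did not flag is in Step~3, and it is an interchange of limits. For each fixed finite grid $G$ your inequality holds with an $o(1)$ that depends on $G$, so what you actually get is
\[
\sup_{G}\ \mylimsup_{k}\Pb\bigl(\Delta^{G}_h(W_k)>\varepsilon\bigr)\leq\beta(h).
\]
Bounding $\mylimsup_k\Pb\bigl(\Delta^{(M_1)}_{h/2}(W_k)>\varepsilon\bigr)$ requires the supremum over $G$ \emph{inside} the $\limsup$, but monotone convergence in $G$ holds only for fixed $k$. For a general sequence the two differ: $W_k=\ind_{[c,c+1/k)}$, with $c$ outside the countable set carrying the grids, escapes every fixed grid while $\Delta^{(M_1)}_h(W_k)\geq 1$. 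Excluding such behaviour is exactly the tightness of $(W_k)$ you are trying to prove.

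The repair is to transfer the modulus bound for each \emph{fixed} $\varsigma$, letting $n\to\infty$, rather than along the diagonal. Since $\{\Delta^G_h>\varepsilon\}$ is open in $\RR^G$ (take $G$ in the set where fidi converge), the portmanteau theorem gives
\[
\Pb\bigl(\Delta^{G}_h(Y^{(\varsigma)})>\varepsilon\bigr)\leq\mylimsup_{n}\Pb\bigl(\Delta^{(M_1)}_h(Y_n^{(\varsigma)})>\varepsilon\bigr)\leq\beta_\varepsilon(h):=\mylimsup_{n}\ \sup_{\varsigma'\in\Sigma}\Pb\bigl(\Delta^{(M_1)}_h(Y_n^{(\varsigma')})>\varepsilon\bigr).
\]
The right-hand side depends neither on $G$ nor on $\varsigma$, so letting $G$ increase and using right-continuity as you did yields $\sup_{\varsigma\in\Sigma}\Pb\bigl(\Delta^{(M_1)}_{h/2}(Y^{(\varsigma)})>\varepsilon\bigr)\leq\beta_\varepsilon(h)$. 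This is the bound you need along any diagonal sequence $W_k$. With it, and with the added tightness hypothesis, the rest of your argument goes through.
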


In the remainder of this section, we show how the techniques used in the
likelihood ratio analysis method in the case of the space $\ED_0^{(J_1)}(\RR)$
can be adapted to the case of the space $\ED_0^{(M_1)}(\RR)$.

\subsection{Establishing the weak convergence of the likelihood ratio in
  the space $\ED_0^{(M_1)}(\RR)$}

In order to show that the likelihood ratio process $Z_n$ converges weakly to a
limiting likelihood ratio process~$Z=Z^{(\theta)}$ in one of the function
spaces described previously, one needs, in particular, to check the second
condition (the one on the modulus of continuity) of the corresponding weak
convergence criterion.  In the framework of the space $\ED_0^{(J_1)}(\RR)$,
this condition is checked by controlling at the same time the restricted to
intervals of length~$1$ modulus of continuity, as well as the tails of $Z_n$
(cf.~Section~5.3 of \citet{IbHas81}).  The following theorem shows that this
technique is also valid in the case of the space~$\ED_0^{(M_1)}(\RR)$.  Before
we state it, let us slightly rewrite the modulus of continuity~\eqref{MC0} as
\begin{equation}
\label{MC}
\Delta^{(M_1)}_h(f)=\sup_{u\in \RR } \ \ \sup_{u',u''\in \RR \: : \: u-h\leq
  u'\leq u\leq u''\leq u+h} d\bigl( f(u);[f(u'), f(u'')] \bigr) +
\sup_{|u|>1/h} \bil|f(u)\bir|
\end{equation}
and introduce the restricted to an interval $[A,B]\subset\RR$ modulus of
continuity
\begin{equation}
\label{RMC}
\Delta^{(M_1)}_h(f;[A, B])=\sup_{u\in [A,B]} \ \ \sup_{u',u''\in \RR \: : \:
  u-h\leq u'\leq u\leq u''\leq u+h} d\bigl( f(u);[f(u'), f(u'')] \bigr),
\end{equation}
where $f\in \ED_0(\RR)$ and $h>0$.

\begin{theorem}
\label{Cv-M1-unif}
Let\/ $\KK\subset\Theta$ be a compact.  We suppose that:
\begin{enumerate}[label=$(\mathcal{\Alph*})$, ref=$(\mathcal{\Alph*})$]
\item\label{CondA} there exists\/ $\gamma>0$ such that for all\/ $h>0$, $n\in
  \NN^*$, $l\in \ZZ$ and\/ $\theta \in \KK$, we have
\begin{equation}
\label{C1}
\Pb_\theta\Bigl( \Delta^{(M_1)}_h\bigl(Z_n^{1/2};[l, l+1]\bigr)>h^\gamma
\Bigr)\leq B(l)\, h^\gamma,
\end{equation}
where\/ $B$ is a function of (at most) polynomial growth;
\item\label{CondB} there exists\/ $\kappa>0$ such that
\begin{equation}
\label{C2}
\Ex_\theta Z_n^{1/2}(u) \leq \exp\{-\kappa \, \bil| u \bir|\}
\end{equation}
for all\/ $n\in \NN^*$, $u\in \RR$ and\/ $\theta \in \KK$.
\end{enumerate}
Then:
\begin{enumerate}[label=$(\roman*)$,ref=$(\roman*)$]
\item\label{Res1} there exist\/ $b, C>0$ such that for all\/ $n\in \NN^*$,
  $l\in \ZZ$ and\/ $\theta \in \KK$, we have
\begin{equation}
\label{R1}
\Pb_\theta\biggl( \sup_{l\leq u \leq l+1} Z_n(u)>\exp\{-b\, \bil| l \bir|\}
\biggr)\leq C \exp\{-b\,|l|\}\; ;
\end{equation}
\item\label{Res2} there exist\/ $b, C>0$ such that for all\/ $n\in \NN^*$,
  $A\in \NN$ and\/ $\theta \in \KK$, we have
\begin{equation}
\label{R2}
\Pb_\theta\biggl( \sup_{|u|>A} Z_n(u)>\exp\{-b\,A\} \biggr)\leq C
\exp\{-b\,A\}\; ;
\end{equation}
\item\label{Res3} for all\/ $\varepsilon>0$, we have
\[
\lim_{h\to 0} \ \mylimsup_{n\to +\infty} \ \sup_{\theta \in \KK} \Pb_\theta
\bigl(\Delta^{(M_1)}_h(Z_n)>\varepsilon \bigr)=0.
\]
\end{enumerate}
\end{theorem}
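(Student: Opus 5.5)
The plan is to adapt the argument of Section~5.3 of \citet{IbHas81}, replacing the $J_1$ modulus by the $M_1$ modulus, and to work with $Y_n=Z_n^{1/2}$ throughout. Since $x\mapsto x^2$ is increasing on $[0,\infty)$, $Z_n$ and $Y_n$ have the same ordering structure. For~\ref{Res1}, fix $l\in\ZZ$ and a grid $u_j=l+j h$, $j=0,\dots,N=1/h$, in $[l,l+1]$. For any $u\in[l,l+1]$, choose grid points $u'\le u\le u''$ with $u''-u'\le h$, taken inside $[l,l+1]$ where possible and from the adjacent intervals otherwise. By definition of the restricted modulus,
\[
Y_n(u)\le \max\{Y_n(u'),Y_n(u'')\}+\Delta^{(M_1)}_h\bigl(Y_n;[l,l+1]\bigr).
\]
Hence $\sup_{[l,l+1]}Y_n\le \max_j Y_n(u_j)+\Delta_h^{(M_1)}(Y_n;[l,l+1])$, where the maximum runs over a grid of $N+3$ points. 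This is the key point where $M_1$ suffices: the $J_1$ argument controls $Y_n(u)$ by the value at one neighbour, whereas here we only control it by the larger of two neighbours, which is harmless for a supremum bound.

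Next we estimate probabilities. By Markov's inequality and~\eqref{C2},
\[
\Pb_\theta\bigl(Y_n(u_j)>e^{-b|l|}/2\bigr)\le 2e^{b|l|}e^{-\kappa(|l|-1)}.
\]
Using~\eqref{C1}, $\Pb_\theta\bigl(\Delta_h^{(M_1)}(Y_n;[l,l+1])>h^\gamma\bigr)\le B(l)h^\gamma$. Choose $h=h_l=e^{-c|l|}$ with $c$ small enough that $h^\gamma\le e^{-b|l|}/2$, that is $c\gamma\ge b$ (say $c=b/\gamma$, adjusting constants for small $|l|$). The total probability is then at most
\[
(h^{-1}+3)\,2e^{(b-\kappa)|l|+\kappa}+B(l)e^{-\gamma c|l|}\lesssim e^{(c+b-\kappa)|l|}+B(l)e^{-b|l|}.
\]
With $b$ small relative to $\kappa$ (e.g.\ $b=\kappa\gamma/(2(1+\gamma))$ so that $c+b<\kappa$), and after absorbing the polynomial $B(l)$ by slightly decreasing the exponent, this gives $Ce^{-b'|l|}$. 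Squaring converts the statement on $Y_n$ to one on $Z_n$ with $b$ replaced by $2b$, and taking the minimum of the exponents yields~\eqref{R1}.

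Part~\ref{Res2} follows by a union bound over $|l|\ge A$ (together with $l=-A-1$, etc.). We use $\sup_{|u|>A}Z_n\le\max_{|l|\ge A}\sup_{[l,l+1]}Z_n$ and $e^{-b|l|}\le e^{-bA}$, and sum the geometric series $\sum_{|l|\ge A}Ce^{-b|l|}\le C'e^{-bA}$. Uniformity in $\theta\in\KK$ and $n$ is automatic, since all bounds are.

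For~\ref{Res3}, split $\Delta_h^{(M_1)}(Z_n)$ into the tail part and the local part. The tail term $\sup_{|u|>1/h}Z_n$ is handled by~\ref{Res2} with $A=\lfloor 1/h\rfloor$, giving probability $\le Ce^{-b/h}\to0$ once $e^{-bA}<\varepsilon$. For the local part, fix a large $A$ and split $u$ into $|u|\le A$ and $|u|>A$. When $|u|>A+1$, all of $u,u',u''$ lie in $|\cdot|>A$, so $d\le\sup_{|v|>A}Z_n$, which is small with high probability by~\ref{Res2}. When $|u|\le A+1$, we need to transfer the modulus from $Y_n$ to $Z_n=Y_n^2$. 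This is the main obstacle, but it is mild: $x\mapsto x^2$ is monotone and Lipschitz with constant $2M$ on $[0,M]$. Monotonicity maps the interval $[Y_n(u'),Y_n(u'')]$ onto $[Z_n(u'),Z_n(u'')]$, so
\[
d\bigl(Z_n(u);[Z_n(u'),Z_n(u'')]\bigr)\le 2M\,d\bigl(Y_n(u);[Y_n(u'),Y_n(u'')]\bigr)
\]
on $\{\sup Y_n\le M\}$. Now $\sup_{\RR} Z_n$ is tight by~\ref{Res1}. A union bound of~\eqref{C1} over the $2A+3$ intervals gives probability $\le (2A+3)\max_{|l|\le A+1}B(l)h^\gamma$ that the restricted $Y_n$-moduli exceed $h^\gamma$, and this tends to $0$ as $h\to0$ for fixed $A$. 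Letting $h\to0$ and then $A\to\infty$, $M\to\infty$ completes~\ref{Res3}, uniformly in $n$ and $\theta\in\KK$.
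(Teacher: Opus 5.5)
Your treatment of (i) and (ii) follows the paper's argument: the grid on $[l,l+1]$, the bound $\sup_{[l,l+1]}Y_n\le\max_jY_n(u_j)+\Delta^{(M_1)}_h(Y_n;[l,l+1])$, Markov plus $(\mathcal{A})$ with a mesh exponentially small in $|l|$, then a geometric series. The architecture of (iii) is also the paper's: the tail/local split, the factor $2M$ from monotonicity of $x\mapsto x^2$, and the union bound of $(\mathcal{A})$ over unit intervals.

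There is a small slip in (i). With $c=b/\gamma$ exactly you get $h^\gamma=e^{-b|l|}$, which is not $\le e^{-b|l|}/2$ for any $l$, not just small ones. The constant factor must go into $h$ itself, e.g.\ $h=2^{-1/\gamma}e^{-b|l|/\gamma}$, which is the paper's choice $m=\lfloor 2^{1/\gamma}e^{b|l|/\gamma}\rfloor+1$. Note also that this is a lower bound on $c$, not a smallness condition.

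The genuine gap is the step ``$\sup_{\RR}Z_n$ is tight by (i)''. Assertion (i) only gives $\Pb_\theta(\sup_{[l,l+1]}Z_n>e^{-b|l|})\le Ce^{-b|l|}$. For bounded $|l|$ (say $l=0$) the right-hand side is a fixed constant, typically $\ge 1$; in the paper's proof $C\ge 2^{1/\gamma+1}e^{\kappa}$. So (i) says nothing about $\Pb_\theta(\sup_{[l,l+1]}Z_n>M^2)$ becoming small as $M\to\infty$. Together with (ii) you control $\sup_{|u|>A}Z_n$. However, tightness of $\sup_{|u|\le A+2}Z_n$, uniformly in $n$ and $\theta\in\KK$, does not follow from (i)--(ii) as stated; it must be re-derived from $(\mathcal{A})$ and $(\mathcal{B})$.

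Your own grid inequality does this, with a large fixed threshold instead of $e^{-b|l|}$. Take a mesh $\rho$. Markov and $\Ex_\theta Y_n\le 1$ give $\Pb_\theta(\max_jY_n(u_j)>M/2)\le 2(\rho^{-1}+3)/M$. Condition $(\mathcal{A})$ gives $\Pb_\theta\bigl(\Delta^{(M_1)}_\rho(Y_n;[l,l+1])>M/2\bigr)\le B(l)\rho^\gamma$ as soon as $\rho^\gamma\le M/2$. Taking $\rho=M^{-1/2}$ makes both terms $O(M^{-1/2}+B(l)M^{-\gamma/2})$, and a union bound over $|l|\le A+2$ gives what you need.

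This is precisely the part of the paper's proof of (iii) that bounds $\Pb_\theta(2M_n>h^{-\gamma/2})$, with $M_n=\sup_{|u|\le A+1}Z_n^{1/2}$, via a grid of $\lfloor h^{-\gamma/4}\rfloor$ points. The paper ties the threshold to $h$ so that only the limit $h\to0$ is needed. You keep $M$ as a separate parameter, which is equally valid once this tightness is actually proved.
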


Before proving this theorem, let us note that in presence of the convergence
(uniform with respect to $\theta\in\KK$) of the finite-dimensional
distributions of the normalized likelihood ratio process $Z_n$ to those of
some process $Z$, the assertion~\ref{Res3} allows to apply
Theorem~\ref{conv-faible-M1}, and thus to obtain the weak convergence (uniform
with respect to~$\theta\in\KK$) of $Z_n$ to~$Z$ in~$\ED_0^{(M_1)}(\RR)$.
We'll see later that the rate of convergence and the limiting distribution of
the MLE can then be deduced from this weak convergence (as to BEs, their
properties does not rely on a weak convergence in a function space: the
convergence of finite-dimensional distributions, together with some
inequalities on the increments and on the tails of the likelihood ratio
processes, is sufficient).  As to the assertion~\ref{Res2}, it equally plays
an important role in the study of the MLE.  In particular, together with the
rate of convergence and the limiting distribution of the estimator, it allows
to establish the convergence of (polynomial) moments in exactly the same way
as in Section~5.4 of \citet{IbHas81}.

\begin{proof}
Note first that~\ref{Res1} is equivalent to an apparently weaker assertion
that there exist some constants $b_1,b_2,C>0$ such that
\begin{equation}
\label{ineg3}
\Pb_\theta\biggl( \sup_{l\leq u \leq l+1} Z_n(u)>\exp\{-b_1\, \bil| l \bir|\}
\biggr)\leq C \exp\{-b_2\,|l|\}.
\end{equation}
Indeed, taking $b\leq \min\{b_1,b_2\}$, we have $\exp\{-b_i\, \bil| l
\bir|\}\leq \exp\{-b\, \bil| l \bir|\}$, $i=1,2$, and thus
\begin{align*}
\Pb_\theta\biggl( \sup_{l\leq u \leq l+1} Z_n(u)>\exp\{-b\, \bil| l
\bir|\}\biggr) &\leq \Pb_\theta\biggl( \sup_{l\leq u \leq l+1}
Z_n(u)>\exp\{-b_1\, \bil| l \bir|\}\biggr)\\*
&\leq C\exp\{-b_2\, \bil| l \bir|\}\\*
&\leq C\exp\{-b\, \bil| l \bir|\}.
\end{align*}

Now, in order to show~\eqref{ineg3}, we divide the interval $[l, l+1]$ into
$m$ equal parts (we will fix the value of $m$ later) using the points
$l=u_0<u_1<\cdots <u_m=l+1$.  Therefore, for all $l\leq u\leq l+1$, there
exists $1\leq k\leq m$ such that $u_{k-1}\leq u\leq u_{k}$, and we have
\begin{align*}
Z_n^{1/2}(u) &\leq \max\bigl\{Z_n^{1/2}(u_{k-1}),Z_n^{1/2}(u_{k})\bigr\} +
d\bigl( Z_n^{1/2}(u); [Z_n^{1/2}(u_{k-1}), Z_n^{1/2}(u_{k})] \bigr)\\*
&\leq \max\bigl\{Z_n^{1/2}(u_{k-1}),Z_n^{1/2}(u_{k})\bigr\} +
\Delta^{(M_1)}_{1/m}\bigl( Z_n^{1/2}; [l, l+1] \bigr)
\end{align*}
and, consequently,
\begin{equation}
\label{ineg4}
\sup_{l\leq u \leq l+1}Z_n^{1/2}(u) \leq \max_{0\leq i \leq m}Z_n^{1/2}(u_i) +
\Delta^{(M_1)}_{1/m}\bigl( Z_n^{1/2}; [l, l+1] \bigr).
\end{equation}
Thus, for all $b>0$, we can write
\begin{align*}
\Pb_\theta\biggl( \sup_{l\leq u \leq l+1} Z_n^{1/2}(u)>\exp\{-b\,\bil|l\bir|\}
\biggr) &\leq \Pb_\theta\biggl( \max_{0\leq i \leq m} Z_n^{1/2}(u_i)
>\frac{\exp\{-b\,\bil|l\bir|\}}{2} \biggr)\\*
&\phantom{{}\leq}+ \Pb_\theta\biggl( \Delta^{(M_1)}_{1/m}\bigl( Z_n^{1/2}; [l,
  l+1] \bigr)>\frac{\exp\{-b\,\bil|l\bir|\}}{2} \biggr).
\end{align*}

For the first term, for all $b'>b$, we have
\begin{align*}
\Pb_\theta\biggl( \max_{0\leq i \leq m}
Z_n^{1/2}(u_i)>\frac{\exp\{-b\,\bil|l\bir|\}}{2}\biggr) &\leq
\Pb_\theta\biggl( \max_{0\leq i \leq m}
Z_n^{1/2}(u_i)>\frac{\exp\{-b'\,\bil|l\bir|\}}{2}\biggr)\\*
&\leq \sum_{i=0}^m \Pb_\theta\biggl(
Z_n^{1/2}(u_i)>\frac{\exp\{-b'\,\bil|l\bir|\}}{2}\biggr).
\end{align*}
Further, using Markov's inequality and~\eqref{C2}, for $0\leq l\leq u \leq
l+1$, we obtain
\begin{equation}
\label{bloc1}
\begin{aligned}
\Pb_\theta\biggl(Z_n^{1/2}(u)>\frac{\exp\{-b'\,\bil|l\bir|\}}{2} \biggr) &\leq
2\exp\{b'\,\bil|l\bir|\} \, \Ex_\theta Z_n^{1/2}(u)\\
&\leq 2\exp\{b'\,\bil|l|\} \exp\{-\kappa \, \bil|u\bir| \}\\
&\leq 2\exp\{b'\,\bil|l\bir|\} \exp\{-\kappa \, \bil|l\bir| \}\\
&= 2\exp\{(b'-\kappa)\bil|l\bir|\},
\end{aligned}
\end{equation}
and for $l\leq u \leq l+1\leq 0$, we obtain
\begin{align*}
\Pb_\theta\biggl(Z_n^{1/2}(u)>\frac{\exp\{-b'\,\bil|l\bir|\}}{2} \biggr) &\leq
2\exp\{b'\,\bil|l|\} \exp\{-\kappa \, \bil|u\bir| \}\\*
&\leq 2\exp\{b'\,\bil|l\bir|\} \exp\{-\kappa \, (\bil|l\bir|-1) \}\\*
&= 2e^{\kappa} \exp\{(b'-\kappa)\bil|l\bir|\}.
\end{align*}
Therefore,
\[
\Pb_\theta\biggl( \max_{0\leq i \leq m}
Z_n^{1/2}(u_i)>\frac{\exp\{-b\,\bil|l\bir|\}}{2}\biggr) \leq \sum_{i=0}^m
2e^{\kappa} \exp\{(b'-\kappa)\bil|l\bir|\} = 2e^{\kappa}(m+1)
\exp\{(b'-\kappa)\bil|l\bir|\}.
\]

In order to upper bound the second term, we choose $m=\lfloor
2^{1/\gamma}\exp\{b\, \bil| l \bir|/\gamma\}\rfloor +1$.  Since~$m\geq
2^{1/\gamma}\exp\{b\, \bil| l \bir|/\gamma\}$, we have $(1/m)^\gamma \leq
\exp\{-b\, \bil| l \bir|\}/2$, and hence
\[
\Pb_\theta\biggl( \Delta^{(M_1)}_{1/m}\bigl( Z_n^{1/2}; [l, l+1]
\bigr)>\frac{\exp\{-b\,\bil|l\bir|\}}{2} \biggr) \leq \Pb_\theta\biggl(
\Delta^{(M_1)}_{1/m}\bigl( Z_n^{1/2}; [l, l+1] \bigr)>
\frac{1}{m^\gamma}\biggr) \leq \frac{B(l)}{m^\gamma}\, .
\]

So, by choosing $b$ and $b'$ such that $b<b'$ and $(1/2+1/\gamma)b+b'<
\kappa$, we obtain
\begin{align*}
\Pb_\theta\biggl( \sup_{l\leq u \leq l+1} {}& Z_n(u)>\exp\{-b\,|l|\} \biggr)
\leq \Pb_\theta\biggl( \sup_{l\leq u \leq l+1} Z_n^{1/2}(u)>\exp\{-b\,|l|/2\}
\biggr)\\*
&\leq \Pb_\theta\biggl( \sup_{l\leq u \leq l+1} Z_n^{1/2}(u)>\exp\{-b\,|l|\}
\biggr)\\
&\leq 2e^{\kappa} (m+1)\exp\{(b'-\kappa)\bil|l\bir|\}+\frac{B(l)}{m^\gamma}\\
&\leq 2e^{\kappa}\bigl( 2^{1/\gamma}\exp\{b\, \bil|l\bir|/\gamma\}+2\bigr)
\exp\{(b'-\kappa)\bil|l\bir|\} + \frac{B(l)}{2}\exp\{-b\,\bil|l\bir|)\\
&= 2^{1/\gamma+1}e^{\kappa}\exp\{(b/\gamma+b'-\kappa\}\bil|l\bir|\}
+4e^{\kappa} \exp\{(b'-\kappa)\bil|l\bir|\} +
\frac{B(l)}{2}\exp\{-b\,\bil|l\bir|\}\\
&= \exp\{-b\,|l|/2\}\biggl[ 2^{1/\gamma+1}e^{\kappa} \exp \bigl\{
  \bigl((1/2+1/\gamma)b+b'-\kappa \bigr)\bil|l\bir|\bigr\}\\*
&\phantom{{}=\exp\{-b\,|l|/2\}\biggl[}+
    4e^{\kappa}\exp\{(b/2+b'-\kappa)\bil|l\bir|\} +
    \frac{B(l)}{2}\exp\{-b\,\bil|l\bir|/2\} \biggr]\\
&\leq \exp\{-b\,|l|/2\} \biggl[ 2^{1/\gamma+1}e^{\kappa}+4e^{\kappa} +
    \frac{B(l)}{2}\exp\{-b\,\bil|l\bir|/2\} \biggr]\\*
&\leq C\exp\{-b\,|l|/2\},
\end{align*}
where we used the fact that $B(l) \exp\{-b\,\bil|l\bir|/2\}$ in bounded.  As
noted above, this concludes the proof of~\ref{Res1}.

Now, let us turn to the proof of~\ref{Res2}.  We have
\begin{align*}
\Pb_\theta\biggl( \sup_{|u|>A} Z_n(u)>\exp\{-b\, A\} \biggr) &\leq \sum_{l\geq
  0} \Pb_\theta\biggl( \sup_{A+l\leq u\leq A+l+1} Z_n(u)>\exp\{-b\,A\}
\biggr)\\*
&\phantom{{}\leq}+ \sum_{l\geq 0} \Pb_\theta\biggl( \sup_{-A-l-1\leq u\leq
  -A-l} Z_n(u)>\exp\{-b\,A\} \biggr).
\end{align*}
For the first sum, we get
\begin{align*}
\sum_{l\geq 0} \Pb_\theta\biggl( \sup_{A+l\leq u\leq A+l+1} {}&
Z_n(u)>\exp\{-b\,A\} \biggr)\\*
&\leq \sum_{l\geq 0} \Pb_\theta\biggl( \sup_{A+l\leq u\leq A+l+1}
Z_n(u)>\exp\{-b(A+l)\} \biggr)\\
&\leq C\sum_{l\geq 0} \exp\{-b(A+l)\}\\
&\leq C\exp\{-b\,A\}\sum_{l\geq 0} \bigl(e^{-b}\bigr)^l\\
&\leq C\exp\{-b\,A\}\, \frac{1}{1-e^{-b}}\\*
&= C'\exp\{-b\,A\}.
\end{align*}
The second sum is treated in a similar way, which concludes the proof
of~\ref{Res2}.

It remains to show~\ref{Res3}.  We start by checking that the following
inequality is valid as soon as $h<1/A$:
\begin{equation}
\label{ineg1}
\Pb_\theta \Bigl(\Delta^{(M_1)}_h(Z_n)>\varepsilon \Bigr) \leq \Pb_\theta
\Bigl(\Delta^{(M_1)}_h(Z_n;[-A, A])>\frac{\varepsilon}{2} \Bigr) + \Pb_\theta
\biggl(\sup_{|u|>A}Z_n(u)>\frac{\varepsilon}{2} \biggr).
\end{equation}
For this, it is sufficient to show the inclusion
\begin{equation}
\label{inclusion1}
\Bigl\{ \Delta^{(M_1)}_h(Z_n) >\varepsilon \Bigr\} \subset \Bigl\{
\Delta^{(M_1)}_h(Z_n;[-A,A]) >\frac{\varepsilon}{2} \Bigr\} \cup \biggl\{
\sup_{|u|>A}(Z_n) >\frac{\varepsilon}{2} \biggr\}.
\end{equation}
Passing to the complement, let us suppose that
\[
\Delta^{(M_1)}_h(Z_n;[-A,A]) \leq \frac{\varepsilon}{2} \qquad \text{and} \qquad
\sup_{|u|>A}(Z_n) \leq \frac{\varepsilon}{2}\, ,
\]
and show that
\[
\Delta^{(M_1)}_h(Z_n) \leq \varepsilon.
\]
For the second term of the definition~\eqref{MC} of $\Delta^{(M_1)}_h(Z_n)$,
since $1/h>A$, we have
\[
\sup_{|u|>1/h}(Z_n) \leq \sup_{|u|>A}(Z_n) \leq \frac{\varepsilon}{2}\, .
\]
In order to upper bound the first term of $\Delta^{(M_1)}_h(Z_n)$, which we
denote
\[
\smash{\overset{\circ}\Delta}^{(M_1)}_h(Z_n)=\sup_{u\in \RR}
\ \ \sup_{u',u''\in \RR \: : \: u-h\leq u'\leq u\leq u''\leq u+h} d\bigl(
Z_n(u),[Z_n(u'), Z_n(u'')] \bigr),
\]
we first upper bound the distance $d\bigl( Z_n(u),[Z_n(u'), Z_n(u'')] \bigr)$.
\begin{enumerate}
\item If $u\in [-A,A]$, we have
\[
d\bigl( Z_n(u),[Z_n(u'), Z_n(u'')] \bigr) \leq \Delta^{(M_1)}_h(Z_n;[-A,A])
\leq \frac{\varepsilon}{2}\, .
\]
\item If $u\notin [-A,A]$, we have either $u'\notin [-A,A]$, or $u''\notin
  [-A,A]$ (assume that it is~$u'$), and so
\[
Z_n(u), Z_n(u') \leq \sup_{|u|>A } Z_n(u) \leq \frac{\varepsilon}{2} \, ,
\]
which yields again
\[
d\bigl( Z_n(u),[Z_n(u'), Z_n(u'')] \bigr) \leq |Z_n(u)-Z_n(u')| \leq
\frac{\varepsilon}{2}\, .
\]
\end{enumerate}
Thus, we obtain
\[
\smash{\overset{\circ}\Delta}^{(M_1)}_h(Z_n) \leq \frac{\varepsilon}{2}
\]
and, consequently, the inclusion~\eqref{inclusion1} and the
inequality~\eqref{ineg1}.

For the second term of~\eqref{ineg1}, for $A$ large enough (such that
$\varepsilon/2>\exp\{-b\,A\}$), we have
\[
\Pb_\theta \biggl(\sup_{|u|>A}Z_n(u)>\frac{\varepsilon}{2} \biggr) \leq
\Pb_\theta \biggl(\sup_{|u|>A}Z_n(u)>\exp\{-b\,A\} \biggr) \leq C
\exp\{-b\,A\}.
\]
So, we can make this term as small as we want by choosing $A$ large enough.

Now, in order to upper bound the first term of~\eqref{ineg1}, we first show
that
\begin{equation}
\label{ineg2}
\Delta^{(M_1)}_h(Z_n;[-A, A]) \leq 2 M_n \Delta^{(M_1)}_h(Z_n^{1/2};[-A, A]),
\end{equation}
where $M_n=\sup_{|u|\leq A+1} Z_n^{1/2}(u)$.  Here and in the sequel we
consider that $h<1$.  Using the elementary inequality
\[
\bil| a^2-b^2 \bir| = \bil| a-b \bir|\, (a+b) \leq 2\,\bil| a-b \bir| \max
\{a,b\} , \qquad a, b\geq 0,
\]
for all $-A\leq u\leq A$ and $u-h\leq v\leq u+h$ we obtain
\begin{align*}
\bil|Z_n(u)-Z_n(v)\bir| &= \bigl|( Z_n^{1/2}(u))^2-( Z_n^{1/2}(v))^2\bigr|\\*
&\leq 2\, \bigl|Z_n^{1/2}(u)-Z_n^{1/2}(v)\bigr| \max\bigl\{
Z_n^{1/2}(u),Z_n^{1/2}(v)\bigr\}\\*
&\leq 2\, \bigl|Z_n^{1/2}(u)-Z_n^{1/2}(v)\bigr|\, M_n,
\end{align*}
which yields~\eqref{ineg2}.

So, for all $0<h<1$, we have
\begin{align*}
\Pb_\theta \Bigl(\Delta^{(M_1)}_h(Z_n;[-A&, A]) > h^{\gamma/2} \Bigr) \leq
\Pb_\theta \Bigl( 2M_n\Delta^{(M_1)}_h(Z_n^{1/2};[-A,
  A])>h^{\gamma/2}\Bigr)\\*
&\leq \Pb_\theta \Bigl(\Delta^{(M_1)}_h(Z_n^{1/2};[-A, A])>h^{\gamma}\Bigr) +
\Pb_\theta \bigl( 2M_n>h^{-\gamma/2}\bigr)\\
&\leq \sum_{l=-A}^{A-1} \Pb_\theta \Bigl(\Delta^{(M_1)}_h(Z_n^{1/2};[l,
  l+1])>h^{\gamma}\Bigr) + \Pb_\theta \bigl( 2M_n>h^{-\gamma/2}\bigr)\\*
&\leq B'(A)\, h^\gamma + \Pb_\theta \bigl( 2M_n>h^{-\gamma/2}\bigr),
\end{align*}
where $B'(A)=\sum_{l=-A}^{A-1} B(l)$.

In order to upper bound the last probability, we divide the interval $[-A-1,
  A+1]$ into~$m=\lfloor h^{-\gamma/4}\rfloor$ equal parts using the points
$u_0=-A-1<u_1<\cdots<u_m=A+1$.  Using an argument similar to the one which was
used to show~\eqref{ineg4}, we get
\[
M_n\leq \max_{0\leq i \leq m} Z_n^{1/2}(u_i) +
\Delta^{(M_1)}_{\frac{2A+2}{m}}(Z_n^{1/2};[-A-1, A+1]),
\]
and consequently,
\begin{align*}
\Pb_\theta \biggl( M_n>\frac{h^{-\gamma/2}}{2}\biggr) &\leq \Pb_\theta \biggl(
\max_{0\leq i \leq m} Z_n^{1/2}(u_i)>\frac{h^{-\gamma/2}}{4}\biggr)\\*
&\phantom{{}\leq} + \Pb_\theta
\biggl(\Delta^{(M_1)}_{\frac{2A+2}{m}}(Z_n^{1/2};[-A-1,
  A+1])>\frac{h^{-\gamma/2}}{4}\biggr).
\end{align*}
For the first term, since
\[
\Ex Z_n^{1/2}(u) \leq \exp\{-\kappa \, \bil| u \bir| \} \leq 1,
\]
$m\leq h^{-\gamma/4}$ and $h<1$, we obtain
\begin{align*}
\Pb_\theta \biggl( \max_{0\leq i \leq m}
Z_n^{1/2}(u_i)>\frac{h^{-\gamma/2}}{4}\biggr) &\leq \sum_{i=0}^m \Pb_\theta
\biggl( Z_n^{1/2}(u_i)>\frac{h^{-\gamma/2}}{4}\biggr)\\*
&\leq \sum_{i=0}^m \frac{ \Ex Z_n^{1/2}(u_i)}{h^{-\gamma/2}/4}\\
&\leq 4 h^{\gamma/2} (m+1)\\*
&\leq 8h^{\gamma/4}.
\end{align*}
To upper bound the second term, let us first note that as
\[
\frac{h^{-\gamma /2}}{4} \xrightarrow[h\to 0]{ } +\infty \qquad \text{and}
\qquad \biggl(\frac{2A+2}{m}\biggr)^\gamma=\biggl(\frac{2A+2}{\lfloor
  h^{-\gamma /4}\rfloor}\biggr)^\gamma \xrightarrow[h\to 0]{ } 0,
\]
for $h$ sufficiently small we have $h^{-\gamma
  /2}/4>\bigl((2A+2)/m\bigr)^\gamma$.  Therefore, using the fact that~$m\geq
h^{-\gamma/4}/2$, we can write
\begin{align*}
\Pb_\theta \biggl(\Delta^{(M_1)}_{\frac{2A+2}{m}}(Z_n^{1/2};[-A-1&,
  A+1])>\frac{h^{-\gamma/2}}{4}\biggr)\\*
&\leq \Pb_\theta \Biggl(\Delta^{(M_1)}_{\frac{2A+2}{m}}(Z_n^{1/2};[-A-1,
  A+1])>\biggl(\frac{2A+2}{m}\biggr)^\gamma\Biggr)\\
&\leq \sum_{l=-A-1}^{A} B(l)\, \biggl(\frac{2A+2}{m}\biggr)^\gamma\\
&= B'(A+1)\, \biggl(\frac{2A+2}{m}\biggr)^\gamma\\*
&\leq (4A+4)^{\gamma}\, B'(A+1)\,h^{\gamma^2/4},
\end{align*}
which yields
\[
\Pb_\theta \bigl( 2M_n>h^{-\gamma/2}\bigr) \leq
8h^{\gamma/4}+(4A+4)^{\gamma}\,B'(A+1)\,h^{\gamma^2/4}.
\]

So, for $h$ sufficiently small, we finally obtain
\begin{align*}
\Pb_\theta \Bigl(\Delta^{(M_1)}_h(Z_n;[-A, A])>\frac{\varepsilon}{2}\Bigr)
&\leq \Pb_\theta \Bigl(\Delta^{(M_1)}_h(Z_n;[-A, A])>h^{\gamma/2}\Bigr)\\*
&\leq B'(A)\, h^\gamma +8h^{\gamma/4}+(4A+4)^{-\gamma}\, B'(A+1)\,
h^{\gamma^2/4},
\end{align*}
and hence this term becomes as small as we want when $h\to 0$, which completes
the proof of~\ref{Res3}.
\end{proof}

We conclude this subsection with some remarks which allow to enlarge the scope
of application of Theorem~\ref{Cv-M1-unif}.
\begin{remark}
\label{CondA'-Cv-M1}
Theorem~\ref{Cv-M1-unif} remains valid if instead of~\ref{CondA} we suppose
\begin{enumerate}[label=$(\mathcal{\Alph*'})$]
\itshape
\item\label{CondA'} there exist\/ $\gamma_1,\gamma_2,h_0>0$ such that for
  all\/ $0<h<h_0$, $n\in\NN^*$, $l\in \ZZ$ and\/ $\theta \in \KK$, we have
\begin{equation}
\label{C1b}
\Pb_\theta\Bigl( \Delta^{(M_1)}_h\bigl(Z_n^{1/2};[l,l+1]\bigr)>h^{\gamma_1}
\Bigr)\leq B(l)\, h^{\gamma_2},
\end{equation}
where\/ $B$ is a function of (at most) polynomial growth.
\end{enumerate}
Indeed, taking $\gamma \leq \min \{ \gamma_1, \gamma_2 \}$, for $0<h< h_0$ we
have
\[
\Pb_\theta\Bigl( \Delta^{(M_1)}_h\bigl(Z_n^{1/2};[l, l+1]\bigr)>h^\gamma
\Bigr) \leq \Pb_\theta\Bigl( \Delta^{(M_1)}_h\bigl(Z_n^{1/2};[l,
  l+1]\bigr)>h^{\gamma_1} \Bigr) \leq B(l)\, h^{\gamma_2} \leq B(l)\,
h^\gamma,
\]
and for $h\geq h_0$ we have
\[
\Pb_\theta\Bigl( \Delta^{(M_1)}_h\bigl(Z_n^{1/2};[l, l+1]\bigr)>h^\gamma
\Bigr) \leq 1 \leq \frac{1}{h_0^{\gamma}} \, h^{\gamma}.
\]
Thus, for all $h>0$, we obtain
\[
\Pb_\theta\Bigl( \Delta^{(M_1)}_h\bigl(Z_n^{1/2};[l, l+1]\bigr)>h^\gamma \Bigr)
\leq
 B^\star(l) \, h^{\gamma}
\]
where $B^\star(l)=\max \{B(l), (1/h_0)^\gamma \}$, and so~\ref{CondA} is
verified.
\end{remark}

\begin{remark}
\label{CondB'-Cv-M1}
Theorem~\ref{Cv-M1-unif} remains valid if instead of~\ref{CondB} we suppose
\begin{enumerate}[label=$(\mathcal{\Alph*'})$, start=2]
\itshape
\item there exist\/ $\kappa>0$ and a non negative and increasing
  function\/~$g$ on\/ $[ 0,+\infty)$ verifying\/ $g(l)\geq \kappa \, l$ for
    all\/ $l \in \NN$, such that
\begin{equation}
\label{C2b}
\Ex_\theta Z_n^{1/2}(u) \leq \exp\{-g(\bil| u \bir|)\}
\end{equation}
for all\/ $n\in\NN^*$, $u\in \RR$ and\/ $\theta \in \KK$.
\end{enumerate}
Indeed, it is enough to make some small adaptations in the proof.  For
example, the sequence of inequalities~\eqref{bloc1} becomes
\begin{align*}
\Pb_\theta\biggl(Z_n^{1/2}(u)>\frac{\exp\{-b'\,\bil|l\bir|\}}{2} \biggr) &\leq
2\exp\{b'\,\bil|l\bir|\} \, \Ex_\theta Z_n^{1/2}(u)\\*
&\leq 2\exp\{b'\,\bil|l|\} \exp\{- g(\bil|u\bir|) \}\\
&\leq 2\exp\{b'\,\bil|l|\} \exp\{-g(\bil|l\bir|) \}\\
&\leq 2\exp\{b'\,\bil|l\bir|\} \exp\{-\kappa \, \bil|l\bir| \}\\*
&= 2\exp\{(b'-\kappa)\bil|l\bir|\}.
\end{align*}
\end{remark}

\begin{remark}
\label{n-C1-Cv-M1}
If in the hypotheses of Theorem~\ref{Cv-M1-unif} we suppose that the
inequality~\eqref{C1} is verified only starting from a certain $n_0$ (instead
of being verified for all $n\in \NN^*$), the inequalities~\eqref{R1}
and~\eqref{R2} will obviously hold starting from this same $n_0$, and the
assertion~\ref{Res3} will remain true.  The same considerations apply, of
course, to the inequalities~\eqref{C2}, \eqref{C1b} and~\eqref{C2b}.
\end{remark}

\begin{remark}
We can replace $A\in \NN$ by $A\in [ 1,+\infty )$ in the assertion~\ref{Res2}
  of Theorem~\ref{Cv-M1-unif}.  Indeed, for all $A\geq 1$, we have $A/2 \leq
  \lfloor A \rfloor \leq A$, and therefore we can write
\begin{align*}
\Pb_\theta\biggl( \sup_{|u|>A} Z_n(u)>\exp(-b\, A/2) \biggr) &\leq
\Pb_\theta\biggl( \sup_{|u|>\lfloor A \rfloor} Z_n(u)>\exp(-b\lfloor A
\rfloor) \biggr)\\*
&\leq C\exp(-b\lfloor A \rfloor)\\*
&\leq C\exp(-b \, A/2).
\end{align*}
Thus, we obtain the inequality~\eqref{R2} (with $b/2$ instead of $b$).
\end{remark}

\subsection{Deducing the asymptotic properties of the MLE}

Let us now show how the rate of convergence and the limiting distribution of
the MLE can be deduced from the convergence of $Z_n$ to $Z$ in the
space~$\ED_0^{(M_1)}(\RR)$ under the condition that the limiting process $Z$
almost surely ``attains its suppremum'' in a unique point.  For this, we adapt
(while also treating some technical points more precisely) the argument
developed by \citeauthor{IbHas81} in~\myciteyearp{IbHas81} for the case of
convergence in~$\ED_0^{(J_1)}(\RR)$.

First of all, we recall that in the case where the likelihood $L \bigl(\theta,
X^{(n)} \bigr)$ is a càdlàg function of $\theta$, the MLE $\hat{\theta}_n =
\argsup_{t \in \Theta} L \bigl(t, X^{(n)} \bigr)$ is defined as any of the
solutions of the equation
\[
L^\pm \bigl(\hat{\theta}_n, X^{(n)} \bigr) = \sup_{t \in \Theta} L \bigl(t,
X^{(n)} \bigr),
\]
where we use the notation $L^\pm \bigl(t, X^{(n)} \bigr) = \max\bigl\{ L
\bigl(t-, X^{(n)} \bigr), L \bigl(t, X^{(n)} \bigr) \bigr\}$, $t\in\RR$.  We
equally introduce the smallest and the largest ``possible values'' of the EMV
by
\begin{align*}
\hat{\theta}_n^- &= \inf \Bigl\{s\in\Theta \: : \: L^\pm \bigl(s, X^{(n)}
\bigr) = \sup_{t \in \Theta} L \bigl(t, X^{(n)} \bigr) \Bigr\}\\
\intertext{and}
\hat{\theta}_n^+ &= \sup \Bigl\{s\in\Theta \: : \: L^\pm \bigl(s, X^{(n)}
\bigr) = \sup_{t \in \Theta} L \bigl(t, X^{(n)} \bigr) \Bigr\}
\end{align*}
respectively.  Finally, we suppose that the $\argsup$ of the limiting process
$Z$ is almost surely unique, that is, that the equation
\[
Z^\pm(\eta) = \sup_{u \in \RR} Z(u)
\]
has a unique solution $\eta = \argsup_{u \in \RR} Z(u)$, and show that
$\varphi_n^{-1} \bigl(\hat{\theta}_n-\theta\bigr) \Longrightarrow \eta$ (the
notations~$Z^\pm$, used here, and $Z_n^\pm$, which will be used below, are
analogous to the notation $L^\pm$ introduced above).

We start by noting that it is sufficient to show the convergences
$\varphi_n^{-1} \bigl(\hat{\theta}_n^--\theta\bigr) \Longrightarrow \eta$
and~$\varphi_n^{-1} \bigl(\hat{\theta}_n^+-\theta\bigr) \Longrightarrow \eta$.
We will study only the second one (the first one can be treated in a similar
way).

Using the change of variable $t=\theta+u\varphi_n$, $s=\theta+v\varphi_n$ (and
thus $v=\varphi_n^{-1}(s-\theta)$), we can write
\begin{align*}
\varphi_n^{-1} (\hat{\theta}_n^+-\theta) &= \sup \Bigl\{v \in \UU_n \: : \:
L^\pm \bigl(\theta+v\varphi_n, X^{(n)}\bigr) = \sup_{u \in \UU_n} L
\bigl(\theta+u\varphi_n, X^{(n)}\bigr) \Bigr\}\\*
&= \sup \Bigl\{v \in \UU_n \: : \: Z_n^\pm(v) = \sup_{u \in \UU_n} Z_n(u)
\Bigr\}.
\end{align*}
Therefore, for all $x \in\RR$, denoting $A_{n,x}=\bigl\{Z_n(x-) =
Z_n(x)\bigr\}$ we have the following equality of the events.
\[
\{\varphi_n^{-1} (\hat{\theta}_n^+-\theta)<x\} \cap A_{n,x} = \biggl\{ \sup_{u
  \leq x}Z_n(u) > \sup_{u > x}Z_n(u) \biggr\} \cap A_{N,x},
\]
and hence
\[
\Pb_\theta \bigl( \varphi_n^{-1} (\hat{\theta}_n^+-\theta)<x\bigr) =
\Pb_\theta \biggl( \sup_{u \leq x}Z_n(u) > \sup_{u > x}Z_n(u) \biggr)
\]
as soon as $\Pb(A_{N,x})=1$, that is, for all $x\in T_{Z_n}$.

Similarly, we get
\[
\Pb (\eta<x) = \Pb \biggl( \sup_{u \leq x}Z(u) > \sup_{u > x}Z(u) \biggr)
\]
for all $x\in T_Z$.

Now, according to Proposition~\ref{ContSup}, we have the convergence in
distribution
\[
\sup_{u \leq x}Z_n(u) - \sup_{u > x}Z_n(u) \Longrightarrow \sup_{u \leq
  x}Z(u) - \sup_{u > x}Z(u),
\]
and hence
\begin{equation}
\label{ConvSupGtSup}
\Pb_\theta \biggl( \sup_{u \leq x}Z_n(u) > \sup_{u > x}Z_n(u) \biggr)
\longrightarrow \Pb \biggl( \sup_{u \leq x}Z(u) > \sup_{u > x}Z(u) \biggr),
\end{equation}
provided that
\[
\Pb \biggl( \sup_{u \leq x}Z(u) = \sup_{u > x}Z(u) \biggr)=0.
\]
However, taking into account that the $\argsup$ of $Z$ is almost surely
unique, we have
\[
\Pb \biggl( \sup_{u \leq x}Z(u) = \sup_{u > x}Z(u) \biggr) = \Pb
(\eta=x)
\]
for all $x\in T_Z$ (note that, as above, this equality between probabilities
is true not because of the equality of the concerned events, but because of
the equality of their intersections with the event $A_x = \bigl\{Z(x-) =
Z(x)\bigr\}$ of probability~$1$).  So, the convergence~\eqref{ConvSupGtSup}
holds for all~$x \in T = \bigl\{x \: : \: \Pb(\eta=x) = 0\bigr\}$.  Note also
that the complement of the set $T$ is the set of atoms of the random variable
$\eta$ and is, consequently, (at most) countable.

Thus, we have shown that
\begin{equation}
\label{ConvCDFMLE}
\Pb_\theta \bigl( \varphi_n^{-1} (\hat{\theta}_n^+-\theta)<x\bigr)
\longrightarrow
\Pb (\eta<x)
\end{equation}
for all
\[
x \in S = \biggl(\bigcap_{n\in\NN^*} T_{Z_n}\biggr) \cap T_Z \cap T.
\]

It remains to note that the complement of the set $S$ being a countable union
of (at most) countable sets, it is (at most) countable itself.  In particular,
the convergence~\eqref{ConvCDFMLE} holds for all $x$ from a dense subset of
$\RR$ (the set $S$), which is sufficient to conclude the desired convergence
in distribution $\varphi_n^{-1} \bigl(\hat{\theta}_n^+-\theta\bigr)
\Longrightarrow \eta$.  As we have already said, together with $\varphi_n^{-1}
\bigl(\hat{\theta}_n^- -\theta\bigr) \Longrightarrow \eta$, this allows us to
deduce the rate of convergence and the limiting distribution of
$\hat{\theta}_n$:
\[
\varphi_n^{-1} \bigl(\hat{\theta}_n-\theta\bigr) \Longrightarrow \eta.
\]
Note also that afterwards, as already mentioned in the previous subsection,
the convergence of (polynomial) moments, that is,
\[
\lim_{n\to +\infty} \varphi_n^{-p} \, \Ex_\theta
\bigl|\hat{\theta}_n-\theta\bigr|^p = \Ex {\bil| \eta \bir|}^p
\]
for any $p>0$, can be established in exactly the same way as in Section~5.4 of
\citet{IbHas81} using the assertion~\ref{Res2} of Theorem~\ref{Cv-M1-unif}.

To conclude this section, let us note that the assumption that the $\argsup$
of the limiting process $Z$ is almost surely unique is absolutely crucial.
Without this assumption, it is not even true, in general, that the smallest
(resp.~the largest) point of maximum of $Z_n$ converges in distribution to the
smallest (resp.~the largest) point of maximum of $Z$.  However, we can mention
the paper \citet{SeiSen}, where such a result was established for processes on
a compact interval with trajectories in the space
$\ED^{(J_1)}\bigl([a,b]\bigr)$, although only in a very particular case.
Essentially, they consider only processes with piecewise constant trajectories
(it should be noted, though, that their general framework is multivariate,
with trajectories piecewise constant with respect to one variable, and
continuous with respect to the others), they suppose that the limiting process
almost surely attains its maximum on a unique plateau, and they require not
only the weak convergence of the processes themselves, but also the weak
convergence of the associated \emph{pure jump processes\/} (which have the
same jump points, but whose all the jumps are of size $1$).

\section{Some tools for the study of the modulus of continuity corresponding
  to the Skorokhod $M_1$ topology}
\label{Sec-outils}

In this section, we develop some tools that can facilitate the study of the
$M_1$~modulus of continuity (the modulus of continuity corresponding to the
Skorokhod $M_1$ topology).

To simplify the exposition, we place ourselves in the original framework of
functions on a compact interval $[a,b]\subset\RR$ considered by
\citeauthor{Sko} in~\myciteyearp{Sko}.  Thus, we study the modulus of
continuity corresponding to the topology~$M_1$ on the space
$\ED\bigl([a,b]\bigr)$ of càdlàg functions on~$[a,b]$.  This modulus of
continuity is given by
\[
\omega_h^{(M_1)}(f)=\sup_{u,u',u''\in[a,b] \: : \: u-h\leq u'\leq u\leq
  u''\leq u+h} d\bigl( f(u);[f(u'),f(u'')] \bigr),
\]
where $f \in \ED\bigl([a,b]\bigr)$ and $h>0$.  We equally recall the space
$\EC\bigl([a,b]\bigr)$ of continuous functions on~$[a,b]$, as well as the
uniform modulus of continuity (corresponding to the uniform topology~$U$ on
either of these two spaces)
\[
\omega_h^{(U)}(f) = \sup_{u,u'\in[a,b]\::\:\bil|u-u'\bir|\leq h} |f(u)-f(u')|.
\]

Note that though all the notions that we introduce below, as well as most of
the results that we obtain, can be directly transposed to the case of
functions on the whole real line, this is unfortunately not the case of the
results concerning the modulus of continuity~$\omega_h^{(M_1)}(f)$ (neither of
those concerning the modulus of continuity~$\omega_h^{(U)}(f)$), because of
the second term of the definition~\eqref{MC} of the modulus of
continuity~$\Delta^{(M_1)}_h(f)$ of functions~$f\in\ED_0(\RR)$.  Nevertheless,
these results can be adapted (we will give more details at the end of this
section) to the restricted to an interval $[A,B]\subset\RR$ modulus of
continuity of functions $f\in\ED_0(\RR)$, that is, to
$\Delta^{(M_1)}_h(f;[A,B])$ defined by~\eqref{RMC}.  They can hence be useful
for checking the condition~\ref{CondA} of Theorem~\ref{Cv-M1-unif} which, as
we have seen in the previous section, is the result allowing to control the
modulus of continuity~$\Delta^{(M_1)}_h(f)$ of functions~$f\in\ED_0(\RR)$ in
the framework of the likelihood ratio analysis method.

\subsection{Moduli of increase and decrease}

We introduce the moduli of increase and decrease with the help of the
following definition.
\begin{definition}
Let\/ $f$ be a function on an interval\/ $[a,b]\subset\RR$, and let\/ $h>0$.
\begin{enumerate}
\item We call\/ \emph{modulus of increase} of\/ $f$ the quantity
\[
\omega^+_h(f) = \sup_{u,v\in[a,b]\::\:u\leq v\leq u+h} \bigl( f(v)-f(u)
\bigr)_+ \: .
\]
\item We call\/ \emph{modulus of decrease} of\/ $f$ the quantity
\[
\omega^-_h(f) = \omega^+_h(-f) = \sup_{u,v\in[a,b]\::\:u\leq v\leq u+h} \bigl(
f(v)-f(u) \bigr)_- \: .
\]
\end{enumerate}
\end{definition}

Recall that for $x\in\RR$, as usual, $x_+$ and $x_-$ denote respectively its
positive and negative parts which are given by
\[
x_+=\max\{x,0\}=\frac{\bil|x\bir|+x}{2} \qquad \text{and} \qquad
x_-=(-x)_+=-\min\{x,0\}=\frac{\bil|x\bir|-x}{2} \, ,
\]
and that we have $x = x_+ - x_-$ and $\bil|x\bir| = x_+ +x_- =
\max\{x_+,x_-\}$.

Let us note that after introducing the notions of the moduli of increase and
decrease, we discovered that these terms have already been used in
\citet*{Appell} and in \citet{Banas} for some very similar objects.  The
difference is that in their definition $\bil|x\bir|+x$ and $\bil|x\bir|-x$ are
used instead of the positive and negative parts respectively (which results in
two times greater quantities).

Note also that if $f$ is an increasing function, we obviously have
$\omega^-_h(f)=0$ for all~$h>0$.  Similarly, if $f$ is a decreasing function,
we have $\omega^+_h(f)=0$ for all~$h>0$.

The following results relate the uniform and $M_1$ moduli of continuity
on~$\ED\bigl([a,b]\bigr)$ to the moduli of increase and decrease.
\begin{proposition}
\label{module-de-de/croissance-avec-U}
Let\/ $f\in \ED\bigl([a,b]\bigr)$ and\/ $h>0$.  Then,
\[
\omega^{(U)}_h(f) = \max\bigl\{ \omega^+_h(f), \omega^-_h(f)\bigr\}.
\]
\end{proposition}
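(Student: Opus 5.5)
We will prove the equality as two inequalities; the argument is purely elementary and uses only the definitions of $\omega^{(U)}_h$, $\omega^+_h$ and $\omega^-_h$ together with the identity $\bil|x\bir| = \max\{x_+,x_-\}$ recalled above. The key observation is that the supremum defining $\omega^{(U)}_h(f)$ runs over unordered pairs $u,u'\in[a,b]$ with $\bil|u-u'\bir|\leq h$, and each such pair can be rewritten as an ordered pair $(u,v)$ with $u\leq v\leq u+h$ by setting $u=\min$ and $v=\max$ of the two points. This reduces everything to the index set common to $\omega^+_h$ and $\omega^-_h$.

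For the inequality $\omega^{(U)}_h(f) \leq \max\bigl\{\omega^+_h(f),\omega^-_h(f)\bigr\}$, we take any $u,u'$ with $\bil|u-u'\bir|\leq h$ and reorder them as $u\leq v\leq u+h$. Since $\bil|f(u)-f(u')\bir| = \bil|f(v)-f(u)\bir|$, we get
\[
\bil|f(v)-f(u)\bir| = \max\bigl\{(f(v)-f(u))_+,(f(v)-f(u))_-\bigr\} \leq \max\bigl\{\omega^+_h(f),\omega^-_h(f)\bigr\}.
\]
Taking the supremum over all admissible pairs then gives the bound.

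For the reverse inequality, we take any $u\leq v\leq u+h$ in $[a,b]$. Then $\bil|u-v\bir|\leq h$, and both $(f(v)-f(u))_+$ and $(f(v)-f(u))_-$ are at most $\bil|f(v)-f(u)\bir|\leq\omega^{(U)}_h(f)$. Taking suprema gives $\omega^\pm_h(f)\leq\omega^{(U)}_h(f)$, and hence the maximum of the two is also bounded by $\omega^{(U)}_h(f)$.

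There is no real obstacle here; the only point requiring care is the symmetrisation step, that is, checking that the unordered condition $\bil|u-u'\bir|\leq h$ corresponds exactly to the ordered condition $u\leq v\leq u+h$ after swapping the points if needed. Note that the càdlàg property of $f$ is not actually used, and the identity holds for arbitrary functions on~$[a,b]$, with suprema possibly infinite.
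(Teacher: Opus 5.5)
Your proof is correct and matches the paper's: the paper uses the same reduction to ordered pairs $u\leq v\leq u+h$ and the identity $|x|=\max\{x_+,x_-\}$, written as a single chain of equalities that interchanges the supremum with the maximum, which your two inequalities spell out explicitly. Your remark that the càdlàg assumption plays no role is also accurate.
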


\begin{proof}
We have
\begin{align*}
\omega_h^{(U)}(f) &= \sup_{u,u'\in[a,b]\::\:\bil|u-u'\bir|\leq h}
|f(u)-f(u')|\\*
&= \sup_{u,v\in[a,b]\::\:u\leq v\leq u+h} |f(v)-f(u)|\\
&= \sup_{u,v\in[a,b]\::\:u\leq v\leq u+h} \max\bigl\{ \bigl( f(v)-f(u)
\bigr)_+ , \bigl( f(v)-f(u) \bigr)_-\bigr\}\\*
&= \max\bigl\{ \omega^+_h(f), \omega^-_h(f)\bigr\},
\end{align*}
which concludes the proof.
\end{proof}

\begin{theorem}
\label{module-de-de/croissance-avec-M1}
Let\/ $f\in \ED\bigl([a,b]\bigr)$ and\/ $h>0$.  Then,
\begin{equation}
\label{ineg-moduleM1}
\omega_h^{(M_1)}(f) \leq \min\bigl\{ \omega^+_h(f), \omega^-_h(f)\bigr\}.
\end{equation}
\end{theorem}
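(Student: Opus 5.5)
By symmetry it is enough to prove $\omega_h^{(M_1)}(f)\leq\omega^+_h(f)$. The second inequality $\omega_h^{(M_1)}(f)\leq\omega^-_h(f)$ then follows by applying the first to $-f$. Indeed, $d(-c;[-a,-b])=d(c;[a,b])$, so $\omega_h^{(M_1)}(-f)=\omega_h^{(M_1)}(f)$, while $\omega^+_h(-f)=\omega^-_h(f)$ by definition. Taking the minimum of the two bounds gives~\eqref{ineg-moduleM1}.

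To prove $\omega_h^{(M_1)}(f)\leq\omega^+_h(f)$, I fix $u,u',u''\in[a,b]$ with $u-h\leq u'\leq u\leq u''\leq u+h$. It then suffices to show
\[
d\bigl(f(u);[f(u'),f(u'')]\bigr)\leq\omega^+_h(f),
\]
since taking the supremum over all such triples concludes. Write $m=\min\{f(u'),f(u'')\}$ and $M=\max\{f(u'),f(u'')\}$. There are three cases.

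\textbf{Case $m\leq f(u)\leq M$.} The distance is $0$, and there is nothing to prove.

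\textbf{Case $f(u)>M$.} The distance equals $f(u)-M\leq f(u)-f(u')$. Since $u'\leq u\leq u'+h$, the pair $(u',u)$ is admissible in the definition of $\omega^+_h$, so this increment is at most $\bigl(f(u)-f(u')\bigr)_+\leq\omega^+_h(f)$.

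\textbf{Case $f(u)<m$.} The distance equals $m-f(u)\leq f(u'')-f(u)$. Since $u\leq u''\leq u+h$, the pair $(u,u'')$ is admissible, and the increment is at most $\bigl(f(u'')-f(u)\bigr)_+\leq\omega^+_h(f)$.

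The only point needing care is the choice of pair in the two nontrivial cases. In each, one of the two outer points lies on the ``increasing'' side of $u$. If $f(u)$ sits above the interval, the increase happens from $u'$ to $u$. If $f(u)$ sits below it, the increase happens from $u$ to $u''$. The ordering $u'\leq u\leq u''$ together with the bound $h$ on both spacings guarantees that both pairs are admissible. No càdlàg property is actually used, so the inequality holds for arbitrary functions on $[a,b]$.
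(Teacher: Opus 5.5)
Your proof is correct and is essentially the paper's argument. It uses the same three-way case split (inside, above, below the interval) and the same pairs $(u',u)$ and $(u,u'')$. The only difference is bookkeeping. When $f(u)$ lies above the interval, the paper writes the distance as $\min\{(f(u)-f(u'))_+,(f(u'')-f(u))_-\}$ and so gets both bounds at once. You instead prove the $\omega^+_h$ bound in both cases and obtain the $\omega^-_h$ bound from the reflection $f\mapsto -f$, which uses exactly the same pairs of points.
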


\begin{proof}
First of all, let us recall that
\[
\omega_h^{(M_1)}(f)
=
\sup_{u,u',u''\in[a,b] \: : \: u-h\leq u'\leq u\leq u''\leq u+h} d\bigl( f(u);[f(u'), f(u'')] \bigr).
\]

If $f(u) \in [f(u'), f(u'')]$, we have $d\bigl( f(u);[f(u'), f(u'')] \bigr) = 0 \leq \min\bigl\{ \omega^+_h(f), \omega^-_h(f)\bigr\}$.

Otherwise, $f(u)-f(u')$ and $f(u)-f(u'')$ have the same sign.  Assuming that
they are both positive (the case where they are both negative can be treated
in a similar way), we have
\begin{align*}
d\bigl( f(u);[f(u'), f(u'')] \bigr) &= \min \bigl\{ f(u)-f(u')\,,\,
f(u)-f(u'') \bigr\}\\*
&= \min \bigl\{ \bigl(f(u)-f(u')\bigr)_+, \bigl(f(u'')-f(u)\bigr)_- \bigr\}\\*
&\leq \min\bigl\{ \omega^+_h(f), \omega^-_h(f)\bigr\}.
\end{align*}

Therefore, in both cases $d\bigl( f(u);[f(u'), f(u'')] \bigr) \leq \min\bigl\{
\omega^+_h(f), \omega^-_h(f)\bigr\}$, which implies the desired result.
\end{proof}

Note that as an immediate corollary of this theorem, we can see that if $f$ is
a monotonic function, then $\omega_h^{(M_1)}(f)=0$ for all~$h>0$ (of course,
we can also see this directly from the definition of the
$\omega_h^{(M_1)}(f)$).

Note also that in the inequality~\eqref{ineg-moduleM1} of
Theorem~\ref{module-de-de/croissance-avec-M1}, one can have both an equality
and a strict inequality (it is not difficult to construct the corresponding
examples).

The previous results allow, in particular, to relate the $M_1$ modulus of
continuity of a function whose all the jumps are of the same sign to the
uniform modulus of continuity of its continuous part.  Note that we need to
assume the summability of the jumps, in order to guarantee the existence of
the continuous part.  More generally, the decomposition of a càdlàg function
$f$ into a continuous and a pure jump parts is possible if and only if its
jumps are (absolutely) summable:
\[
\sum_{t\in J_f}\bil|f(t)-f(t-)\bir| < +\infty,
\]
where $J_f$ is the (at most countable) set of discontinuity points of~$f$.
Note also that the continuous part is defined up to an additive constant.
However, the various moduli of continuity (as well as those of increase and
decrease) are well defined, since they depend only on the increments of the
function.

\begin{theorem}
\label{Ineg-M1-U}
Let\/ $f\in \ED\bigl([a,b]\bigr)$ be a function whose all the jumps are of the
same sign and are summable, and let\/ $h>0$.  Then we have
\[
\omega_h^{(M_1)}(f) \leq \omega_h^{(U)}(f_{\mathrm{c.}}),
\]
where\/ $f_{\mathrm{c.}}$ is the continuous part of\/~$f$.
\end{theorem}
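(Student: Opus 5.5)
We use Theorem~\ref{module-de-de/croissance-avec-M1} together with Proposition~\ref{module-de-de/croissance-avec-U}. Assume without loss of generality that all the jumps of~$f$ are nonnegative. The case of nonpositive jumps then follows by applying the result to~$-f$: the $M_1$ modulus satisfies $\omega_h^{(M_1)}(-f)=\omega_h^{(M_1)}(f)$, since $d(-c;[-a,-b])=d(c;[a,b])$, and $(-f)_{\mathrm{c.}}=-f_{\mathrm{c.}}$ has the same uniform modulus. By summability of the jumps, write $f=f_{\mathrm{c.}}+f_{\mathrm{j.}}$ with
\[
f_{\mathrm{j.}}(t)=\sum_{s\in J_f,\ s\leq t}\bigl(f(s)-f(s-)\bigr).
\]
Under our sign assumption, $f_{\mathrm{j.}}$ is a nondecreasing function, and $f_{\mathrm{c.}}=f-f_{\mathrm{j.}}$ is continuous (it is càdlàg with all jumps removed).

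The key step is to bound the modulus of decrease of~$f$ by that of its continuous part. For $u\leq v\leq u+h$ we have
\[
f(v)-f(u)=\bigl(f_{\mathrm{c.}}(v)-f_{\mathrm{c.}}(u)\bigr)+\bigl(f_{\mathrm{j.}}(v)-f_{\mathrm{j.}}(u)\bigr)\geq f_{\mathrm{c.}}(v)-f_{\mathrm{c.}}(u).
\]
Since $x\mapsto x_-$ is nonincreasing, this gives $\bigl(f(v)-f(u)\bigr)_-\leq\bigl(f_{\mathrm{c.}}(v)-f_{\mathrm{c.}}(u)\bigr)_-$. Taking the supremum yields $\omega^-_h(f)\leq\omega^-_h(f_{\mathrm{c.}})$.

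We then chain the inequalities. Theorem~\ref{module-de-de/croissance-avec-M1} gives $\omega_h^{(M_1)}(f)\leq\min\{\omega^+_h(f),\omega^-_h(f)\}\leq\omega^-_h(f)$. Combining with the previous step, $\omega^-_h(f)\leq\omega^-_h(f_{\mathrm{c.}})$. Finally, Proposition~\ref{module-de-de/croissance-avec-U} applied to $f_{\mathrm{c.}}\in\EC\bigl([a,b]\bigr)\subset\ED\bigl([a,b]\bigr)$ gives $\omega^-_h(f_{\mathrm{c.}})\leq\max\{\omega^+_h(f_{\mathrm{c.}}),\omega^-_h(f_{\mathrm{c.}})\}=\omega_h^{(U)}(f_{\mathrm{c.}})$. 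This is the claim.

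The only point requiring care is the decomposition itself. One must verify that the jump sum converges, which holds by summability, and that $f-f_{\mathrm{j.}}$ is indeed continuous. This is standard: both $f$ and $f_{\mathrm{j.}}$ are càdlàg with identical jumps at every point, so their difference has no jumps. The additive-constant ambiguity in $f_{\mathrm{c.}}$ is irrelevant, since all the moduli involved depend only on increments.
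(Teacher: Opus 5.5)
Your proof is correct and is essentially the paper's argument. The paper treats the negative-jump case first, bounding $\omega^+_h(f)\leq\omega^+_h(f_{\mathrm{c.}})$ and then chaining Theorem~\ref{module-de-de/croissance-avec-M1} with Proposition~\ref{module-de-de/croissance-avec-U}; you do the mirror case and reduce the other one via $f\mapsto -f$, with a bit more detail on the jump decomposition.
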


\begin{proof}
Let $h>0$ and $f\in \ED\bigl([a,b]\bigr)$.

Suppose, at first, that all the jumps of $f$ are negative.  Then, for all
$u,v\in[a,b]$ such that~$u\leq v\leq u+h$, we have
\[
\bigl( f(v)-f(u) \bigr)_+ \leq \bigl( f_{\mathrm{c.}}(v)-f_{\mathrm{c.}}(u)
\bigr)_+,
\]
and so
\[
\omega^+_h(f)
\leq
\omega^+_h(f_{\mathrm{c.}}).
\]
Thus, using Proposition~\ref{module-de-de/croissance-avec-U} and
Theorem~\ref{module-de-de/croissance-avec-M1}, we get
\[
\omega_h^{(M_1)}(f) \leq \min\bigl\{ \omega^+_h(f), \omega^-_h(f)\bigr\} \leq
\omega^+_h(f) \leq \omega^+_h(f_{\mathrm{c.}})  \leq \max\bigl\{
\omega^+_h(f_{\mathrm{c.}}), \omega^-_h(f_{\mathrm{c.}})\bigr\} =
\omega^{(U)}_h(f_{\mathrm{c.}}).
\]

Finally, in the case where all the jumps of $f$ are positive, we can
upper-bound $\omega^-_h(f)$ in a similar way and conclude using the same
argument.
\end{proof}

\subsection{Lipschitz growth and decay}

Now we introduce the notions of Lipschitz growth and decay and show how they
are related to the moduli of increase and decrease.
\begin{definition}
Let\/ $f$ be a function on an interval\/ $[a,b]\subset\RR$.
\begin{enumerate}
\item We say that\/ $f$ is\/ \emph{of Lipschitz growth} (or is\/
  \emph{Lipschitz increasing}), if there exists a constant\/ $L\geq 0$ such
  that
\[
\bigl( f(v)-f(u) \bigr)_+\leq L(v-u)
\]
for all\/ $u,v\in [a,b]$ such that\/ $u\leq v$.  In this case, we also say
that\/ $f$ is\/ \emph{of $L$--Lipschitz growth} (or is\/ \emph{$L$--Lipschitz
increasing}).
\item We say that\/ $f$ is\/ \emph{of Lipschitz decay} (or is\/
  \emph{Lipschitz decreasing}), if\/ $-f$ is of Lipschitz growth, that is, if
  there exists a constant\/ $L\geq 0$ such that
\[
\bigl( f(v)-f(u) \bigr)_-\leq L(v-u)
\]
for all\/ $u,v\in [a,b]$ such that\/ $u\leq v$.  In this case, we also say
that\/ $f$ is\/ \emph{of $L$--Lipschitz decay} (or is\/ \emph{$L$--Lipschitz
decreasing}).
\end{enumerate}
\end{definition}

Note that in the definition of the $L$--Lipschitz growth (resp.~decay), we
could equivalently have used $f(v)-f(u)$ (resp.~$f(u)-f(v)$) in the left hand
side of the inequalities, or simply have required the ratio
\[
\frac{f(v)-f(u)}{v-u}
\]
to be upper-bounded by $L$ (resp.~lower-bounded by~$-L$) for all $u,v\in
[a,b]$ such that~$u\ne v$.

Note also that if a function $f$ is $L$--Lipschitz continuous (with
some~$L\geq 0$), then it is obviously both $L$--Lipschitz increasing and
decreasing.  Inversely, if $f$ is both $L_1$--Lipschitz increasing and
$L_2$--Lipschitz decreasing (with~$L_1,L_2\geq 0$), then it is clearly
$L$--Lipschitz continuous with~$L=\max\{L_1,L_2\}$.

The following theorem establishes a relation between the Lipschitz growth
(resp.~decay) and the modulus of increase (resp.~decrease).
\begin{theorem}
\label{Module-crois-Lipsch}
Let\/ $f$ be a function on an interval\/ $[ a,b]\subset\RR$, and let\/ $L\geq
0$.
\begin{enumerate}
\item The function\/ $f$ is of\/ $L$--Lipschitz growth if and only if\/
  $\omega^+_h(f)\leq L h$ for all\/ $h>0$.
\item The function\/ $f$ is of\/ $L$--Lipschitz decay if and only if\/
  $\omega^-_h(f)\leq L h$ for all\/ $h>0$.
\end{enumerate}
\end{theorem}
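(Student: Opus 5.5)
It suffices to prove the first assertion: the second follows by applying it to $-f$, since $\omega^-_h(f)=\omega^+_h(-f)$ and $f$ is of $L$--Lipschitz decay exactly when $-f$ is of $L$--Lipschitz growth. So we fix $L\geq 0$ and show that $f$ is of $L$--Lipschitz growth if and only if $\omega^+_h(f)\leq Lh$ for all $h>0$. Both directions will come straight from the definitions; no earlier result of the paper is needed.

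For the direct implication, assume $\bigl(f(v)-f(u)\bigr)_+\leq L(v-u)$ for all $u\leq v$ in $[a,b]$, and fix $h>0$. For any $u,v\in[a,b]$ with $u\leq v\leq u+h$ we have $v-u\leq h$. Since $L\geq 0$, this gives $\bigl(f(v)-f(u)\bigr)_+\leq L(v-u)\leq Lh$. Taking the supremum over all such pairs yields $\omega^+_h(f)\leq Lh$.

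For the converse, assume $\omega^+_h(f)\leq Lh$ for every $h>0$, and take $u,v\in[a,b]$ with $u\leq v$. If $u<v$, choose $h=v-u>0$. The pair $(u,v)$ is then admissible in the supremum defining $\omega^+_h(f)$, because $u\leq v\leq u+h$. Hence $\bigl(f(v)-f(u)\bigr)_+\leq\omega^+_{v-u}(f)\leq L(v-u)$.

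The only point needing care is the degenerate case $u=v$, because $h$ must be strictly positive. There the bound is immediate, since $\bigl(f(u)-f(u)\bigr)_+=0=L\cdot 0$. With this case handled, the equivalence follows directly from the definitions, and the argument has no real obstacle.
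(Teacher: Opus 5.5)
Your proof is correct and follows the paper's argument essentially verbatim. You reduce to the growth case via $-f$, bound $\bigl(f(v)-f(u)\bigr)_+\leq L(v-u)\leq Lh$ for the direct implication, and use $\bigl(f(v)-f(u)\bigr)_+\leq\omega^+_{v-u}(f)\leq L(v-u)$ for the converse. Your separate treatment of $u=v$ is a small improvement, since the paper's proof silently invokes $\omega^+_{v-u}(f)$ even when $v-u=0$, where the modulus (defined only for $h>0$) is not available.
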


\begin{proof}
It is sufficient to show only the first assertion of the theorem (the second
one can then be deduced by considering the function~$-f$).

Let $f$ be an $L$--Lipschitz increasing function on $[a,b]$, and let $h>0$.
Then, for all $u,v\in[a,b]$ such that $u\leq v\leq u+h$, we have
\[
\bigl( f(v)-f(u) \bigr)_+\leq L(v-u)\leq L h,
\]
and hence
\[
\omega^+_h(f) \leq L h.
\]

Let now $f$ be a function on $[a,b]$ such that $\omega^+_h(f) \leq L h$ for
all~$h>0$.  Then, for all $u,v\in[a,b]$ such that $u\leq v$, we have
\[
\bigl( f(v)-f(u) \bigr)_+\leq \omega^+_{v-u}(f) \leq L(v-u),
\]
which concludes the proof.
\end{proof}

Let us recall that a function having a bounded derivative is necessarily
Lipschitz continuous.  The following theorem shows that a differentiable
function whose derivative is upper- (resp.~lower-) bounded is necessarily
Lipschitz increasing (resp.~decreasing).
\begin{theorem}
Let\/ $L\geq 0$, $[a,b]\subset\RR$ and\/ $f\in\EC\bigl([a,b]\bigr)$.  Assume
further that\/ $f$ is differentiable on\/~$(a,b)$.
\begin{enumerate}
\item The function\/ $f$ is\/ $L$--Lipschitz increasing if and only if\/
  $f'(t)\leq L$ for all\/~$t\in (a,b)$.
\item The function\/ $f$ is\/ $L$--Lipschitz decreasing if and only if\/
  $f'(t)\geq -L$ for all\/~$t\in (a,b)$.
\end{enumerate}
\end{theorem}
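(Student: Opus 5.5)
We will only prove the first assertion; the second then follows by applying it to $-f$, since $f$ is $L$--Lipschitz decreasing if and only if $-f$ is $L$--Lipschitz increasing, and $(-f)'=-f'$ on~$(a,b)$.

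For the direct implication, assume that $f$ is $L$--Lipschitz increasing and fix $t\in(a,b)$. By the remark following the definition of Lipschitz growth, for all $v\in(t,b]$ we have
\[
\frac{f(v)-f(t)}{v-t}\leq L.
\]
Letting $v\searrow t$, the left-hand side tends to $f'(t)$, since $f$ is differentiable at~$t$. Passing to the limit in the inequality gives $f'(t)\leq L$. Only the one-sided difference quotients are needed here.

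For the converse, assume that $f'(t)\leq L$ for all $t\in(a,b)$, and let $u,v\in[a,b]$ with $u<v$; the case $u=v$ is trivial. The function $f$ is continuous on $[u,v]\subset[a,b]$ and differentiable on $(u,v)\subset(a,b)$, so the mean value theorem yields some $\xi\in(u,v)$ such that
\[
f(v)-f(u)=f'(\xi)(v-u)\leq L(v-u).
\]
Since $L(v-u)\geq 0$, this bound also gives $\bigl(f(v)-f(u)\bigr)_+\leq L(v-u)$, which is exactly $L$--Lipschitz growth. The endpoints need no separate treatment: the continuity of $f$ on the closed interval $[a,b]$ is exactly what makes the mean value theorem applicable when $u=a$ or $v=b$.

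There is no real obstacle in this argument. The only points requiring care are the use of the hypothesis $f\in\EC\bigl([a,b]\bigr)$ at the endpoints, and the observation that the ratio formulation of Lipschitz growth allows a one-sided limit of difference quotients in the direct implication.
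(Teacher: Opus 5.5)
Your proof is correct and follows essentially the same route as the paper: you reduce to the first assertion via $-f$, use the mean value theorem for sufficiency, and use difference quotients at $t$ for necessity. The only cosmetic difference is that you pass to the limit $v\searrow t$ directly, whereas the paper argues by contradiction from a point where $f'(u)>L$.
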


\begin{proof}
It is again sufficient to show only the first assertion of the theorem (the
second one can then be deduced by considering the function~$-f$).

First we assume that $f'(t)\leq L$ for all~$t\in (a,b)$.  Then, by the mean
value theorem, for all $u,v\in[a,b]$ such that $u<v$, there exists $x\in
(u,v)$ verifying
\[
f(v)-f(u) = f'(x)( v-u ),
\]
and hence
\[
f(v)-f(u) \leq L( v-u ),
\]
that is, $f$ is $L$--Lipschitz increasing.

Now suppose that there exists $u\in (a,b)$ such that $f'(u)> L$.  Then, in the
vicinity of~$u$, we can find a point $v$ such that
\[
\frac{f(v)-f(u)}{v-u}> L,
\]
which contradicts the fact that $f$ is $L$--Lipschitz increasing.
\end{proof}

In fact, this theorem can be generalized to continuous piecewise
differentiable functions.  More precisely, it can be generalized to continuous
functions on $[a,b]$ which are differentiable everywhere on~$(a,b)$, except at
a finite number of points or, in other words, at functions
$f\in\EC\bigl([a,b]\bigr)$ such that $\Card\bigl((a,b) \setminus
D_f\bigr)<+\infty$, where we have denoted
\[
D_f=\{u\in (a,b) \: : \: f\text{ is differentiable at }u\}
\]
the set of differentiability points of~$f$.

\begin{theorem}
\label{Lipsch-Deri-Bor}
Let\/ $L\geq 0$, $[a,b]\subset\RR$ and\/ $f\in\EC\bigl([a,b]\bigr)$ such
that\/ $\Card\bigl((a,b) \setminus D_f\bigr)<+\infty$.
\begin{enumerate}
\item The function\/ $f$ is\/ $L$--Lipschitz increasing if and only if\/
  $f'(t)\leq L$ for all\/~$t\in D_f$.
\item The function\/ $f$ is\/ $L$--Lipschitz decreasing if and only if\/
  $f'(t)\geq -L$ for all\/~$t\in D_f$.
\end{enumerate}
\end{theorem}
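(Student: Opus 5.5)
As with the previous theorem, it suffices to prove the first assertion; the second follows by applying it to $-f$, since $(-f)'=-f'$ on $D_{-f}=D_f$. Label the exceptional points in increasing order, $a=t_0<t_1<\cdots<t_k=b$, where $t_1,\dots,t_{k-1}$ are the finitely many points of $(a,b)\setminus D_f$.

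For the direct implication, let $f$ be $L$--Lipschitz increasing and $t\in D_f$. Then for $v>t$ close to $t$ we have $\bigl(f(v)-f(t)\bigr)/(v-t)\leq L$. Letting $v\downarrow t$ gives $f'(t)\leq L$. This is the same local argument as in the previous theorem; it only uses differentiability at the single point $t$, so the exceptional points play no role here.

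For the converse, assume $f'(t)\leq L$ for all $t\in D_f$, and take $u<v$ in $[a,b]$. Insert the exceptional points that lie strictly between $u$ and $v$, obtaining a partition $u=s_0<s_1<\cdots<s_m=v$. On each closed subinterval $[s_{j-1},s_j]$, the function $f$ is continuous and is differentiable on the open interval $(s_{j-1},s_j)\subset D_f$. The previous theorem, applied on $[s_{j-1},s_j]$, therefore gives $f(s_j)-f(s_{j-1})\leq L(s_j-s_{j-1})$. Summing these telescoping inequalities yields $f(v)-f(u)\leq L(v-u)$, so $f$ is $L$--Lipschitz increasing. Equivalently, in the language of moduli, the $L$--Lipschitz increasing property is additive along adjacent intervals.

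The only delicate point is that the previous theorem requires continuity on the closed subinterval and differentiability on its interior. Both hold: continuity on $[s_{j-1},s_j]$ follows from $f\in\EC\bigl([a,b]\bigr)$, and by construction no exceptional point lies in any open piece. Continuity at the exceptional points is essential. Without it, an upward jump at some $t_j$ would break the conclusion, which is why the hypothesis $f\in\EC\bigl([a,b]\bigr)$ is retained.
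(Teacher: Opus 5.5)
Your proof is correct and follows essentially the same route as the paper. Necessity comes from the local difference-quotient bound at a point of $D_f$; sufficiency comes from splitting $[u,v]$ at the finitely many non-differentiability points, applying the mean value theorem on each piece (you invoke it via the previous theorem), and summing the telescoping inequalities, which handles in one partition the zero, one and several exceptional-point cases that the paper treats separately.
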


\begin{proof}
It is still sufficient to show only the first assertion of the theorem (the
second one can then be deduced by considering the function~$-f$).  Moreover,
since the proof of the necessity carried out in the previous theorem remains
valid in this case, we need to show only the sufficiency.

So, let us suppose that $f'(t)\leq L$ for all $t\in D_f$, fix $u,v\in [ a,b ]$
with $u<v$, and denote~$(a,b) \setminus D_f = \{a_1,\ldots, a_m\}$, where
$a_0=a<a_1<\cdots<a_m<b=a_{m+1}$.

If there exists $i \in \{0, \ldots, m\}$ such that $u,v\in [ a_i,a_{i+1} ]$,
then, thanks to the argument used in the proof of the previous theorem, we
have
\[
f(v)-f(u) \leq L( v-u ).
\]

Let us now consider the case where there exists $i \in \{1, \ldots, m\}$ such
that $u\in [ a_{i-1},a_i )$ and~$v\in ( a_i,a_{i+1} ]$.  Then, applying the
mean value theorem on the intervals $[ u,a_i ]$ and on~$[ a_i,v ]$, we get
again
\[
f(v)-f(u) = \bigl(f(v)-f(a_i)\bigr) + \bigl(f(a_i)-f(u)\bigr) \leq
L(v-a_i)+L(a_i-u) = L(u-v).
\]

This argument is easily extended to the case where there are several
non-differentiability points of $f$ between $u$ and~$v$.  Indeed, let us
suppose that there exist $i,j \in \{0, \ldots, m\}$ with~$j-i>1$, such that
$u\in[a_i,a_{i+1} )$ and~$v\in( a_j,a_{j+1} ]$.  Then, applying the mean value
theorem on each of the intervals $[ u,a_{i+1} ], [ a_{i+1},a_{i+2} ], \ldots,
[ a_{j-1},a_{j} ], [ a_j,v ]$, we still get
\begin{align*}
f(v)-f(u) &= \bigl(f(v)-f(a_j)\bigr) + \bigl(f(a_j)-f(a_{j-1})\bigr)\\*
&\phantom{={}} + \cdots + \bigl(f(a_{i+2})-f(a_{i+1})\bigr) +
\bigl(f(a_{i+1})-f(u)\bigr)\\*
&\leq L(v-a_j)+L(a_j-a_{j-1})+\cdots + L(a_{i+2}-a_{i+1})+L(a_{i+1}-u)\\*
&= L(v-u).
\end{align*}

Thus, $f$ is $L$--Lipschitz increasing.
\end{proof}

Note that this theorem is not valid if instead of continuous piecewise
differentiable functions we consider piecewise differentiable càdlàg 
functions.  For example, the function $f(u)=\ind_{\{u\geq 1\}}$, $u\in[0,2]$,
is differentiable on $D_f=(0,2)\setminus\{1\}$ with $f'(u)=0$.  Hence, for any
$L\geq 0$, the derivative of $f$ is lower-bounded by $-L$ and upper-bounded by
$L$, while though $f$ is $L$--Lipschitz decreasing, it is not $L$--Lipschitz
increasing.  However, if we consider that at the point $1$ this function has a
derivative equal to $+\infty$, then the later will still be lower-bounded, but
no longer and upper-bounded, and so the theorem will ``remain true''.

Therefore, a natural way to generalize the previous theorem to piecewise
differentiable càdlàg functions is to use the following extension of the
notion of the derivative.  For a function~$f\in \ED\bigl([a,b]\bigr)$, we
denote $\widetilde{D}_f$ be the set of points $u\in (a,b)$ such that the ratio
\[
\frac{f(u+\delta)-f(u-\delta')}{\delta+\delta'}
\]
has a limit in~$\RR\cup\{-\infty,+\infty\}$ when $\delta \searrow 0$ and
$\delta' \searrow 0$ simultaneously.  For $u\in \widetilde{D}_f$, we call this
limit \emph{extended derivative\/} of $f$ at the point $u$, and we denote it
$f^{\circ}(u)$.  Note that $f^{\circ}(u)=f'(u)$ if $f$ is differentiable
in~$u$, and that $f^{\circ}(u)=+\infty$ (resp.~$f^{\circ}(u)=-\infty$) if $f$
admits a positive (resp.~negative) jump at the point~$u$.  Hence, the set
$\widetilde{D}_f$ contains the set~$D_f$, as well as the set $J_f$ of
discontinuity points of~$f$, but it may also contain some other points (such
as, for example, the points in the vicinity of which the function~$f$ behaves
like the function $u\mapsto \sign(u) \sqrt{\bil|u\bir|}$ does near~$u=0$).  It
should be noted that in order for the following theorem to be valid, it would
have been sufficient to extend the derivative to the set $D_f\cup J_f$, but we
find the above proposed extension more elegant.

\begin{theorem}
Let\/ $L\geq 0$, $[a,b]\subset\RR$ and\/ $f\in\ED\bigl([a,b]\bigr)$ such
that\/ $\Card\bigl((a,b) \setminus D_f\bigr)<+\infty$.
\begin{enumerate}
\item The function\/ $f$ is\/ $L$--Lipschitz increasing if and only if\/
  $f^{\circ}(t)\leq L$ for all\/~$t \in \widetilde{D}_f$.
\item The function\/ $f$ is\/ $L$--Lipschitz decreasing if and only if\/
  $f^{\circ}(t)\geq -L$ for all\/~$t\in \widetilde{D}_f$.
\end{enumerate}
\end{theorem}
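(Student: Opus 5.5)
As for the previous theorems, I will treat only the first assertion; the second follows by applying it to $-f$, since $(-f)^{\circ}=-f^{\circ}$ and $\widetilde{D}_{-f}=\widetilde{D}_f$.

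\emph{Necessity.} Assume $f$ is $L$--Lipschitz increasing and take $t\in\widetilde{D}_f$. For all $\delta,\delta'>0$ with $t-\delta'\ge a$ and $t+\delta\le b$, the definition gives
\[
\frac{f(t+\delta)-f(t-\delta')}{\delta+\delta'}\le L .
\]
Letting $\delta,\delta'\searrow 0$, the limit $f^{\circ}(t)$ exists in $\RR\cup\{\pm\infty\}$ and is therefore at most $L$.

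\emph{Sufficiency.} Assume $f^{\circ}(t)\le L$ on $\widetilde{D}_f$. The set $(a,b)\setminus D_f=\{a_1,\ldots,a_m\}$ is finite and contains every discontinuity point of $f$, so $f$ is continuous on each open piece $(a_i,a_{i+1})$.

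1. At each $a_i$, $f$ cannot have a positive jump. Such a jump would give $f^{\circ}(a_i)=+\infty$, and $a_i$ would lie in $\widetilde{D}_f$ (as the paper notes, $J_f\subset\widetilde{D}_f$), contradicting $f^{\circ}(a_i)\le L$. Hence $f(a_i)-f(a_i-)\le 0$ for every $i$.

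2. Fix $u<v$ in $[a,b]$ and list the points $a_{i+1}<\cdots<a_j$ lying in $(u,v]$, as in the proof of Theorem~\ref{Lipsch-Deri-Bor}. Telescope
\[
f(v)-f(u)=\sum_k\bigl(f(a_k-)-f(\text{previous node})\bigr)+\sum_k\bigl(f(a_k)-f(a_k-)\bigr)+\bigl(f(v)-f(a_j)\bigr).
\]

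3. For a sub-interval $[x,y]$ between consecutive nodes, $f$ is continuous on $[x,y)$ with left limit $f(y-)$ at $y$. It is differentiable on the interior with $f'=f^{\circ}\le L$, since $D_f\subset\widetilde{D}_f$. Apply the mean value theorem on $[x,y-\epsilon]$ and let $\epsilon\to 0$. This gives $f(y-)-f(x)\le L(y-x)$, where $f(x)$ is the right-continuous value.

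4. Combining steps 1–3, the jump terms are nonpositive and the continuous increments add up to at most $L(v-u)$. Hence $f(v)-f(u)\le L(v-u)$.

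The main obstacle is the careful bookkeeping at the nodes. If $v$ itself is a node, we need $f(v)$ rather than $f(v-)$, and the step-1 jump bound covers this. At $u$ itself only right-continuity is used, which is automatic.

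One more point needs checking: a point $a_i$ where $f$ is continuous but not differentiable contributes nothing extra. This is because the mean value argument only uses continuity at the endpoints.
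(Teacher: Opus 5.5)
Your proof is correct and follows essentially the same route as the paper. Necessity comes from passing to the limit in difference quotients bounded by $L$. Sufficiency uses the fact that $f^{\circ}\le L$ forces every jump to be nonpositive, then telescopes over the non-differentiability points and applies the mean value theorem on each piece. The differences are cosmetic: you let $\epsilon\to0$ where the paper uses the continuous extensions $f_j$, and you handle several nodes explicitly where the paper writes out only one. One step is implicit: when $v=b$ is not a node you need $f(b)=f(b-)$, which holds under Skorokhod's left-continuity-at-$b$ convention that the paper adopts for $\ED\bigl([a,b]\bigr)$.
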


\begin{proof}
It is once more sufficient to show only the first assertion of the theorem
(the second one can then be deduced by considering the function~$-f$).

The proof of the sufficiency follows the same pattern as that of the previous
theorem.  We suppose that $f^\circ(t)\leq L$ for all $t\in \widetilde{D}_f$
and, as before, we fix $u,v\in [ a,b ]$ with $u<v$, and denote $(a,b)
\setminus D_f = \{a_1,\ldots, a_m\}$, where $a_0=a<a_1<\cdots<a_m<b=a_{m+1}$.

We only consider the case where there is exactly one non-differentiability
point of $f$ between $u$ and~$v$ (the case where there are several such points
can be adapted similarly, and the case where there is none is the same as
before).  We therefore assume that there exists $i \in \{1, \ldots, m\}$ such
that $u\in [ a_{i-1},a_i )$ and $v\in ( a_i,a_{i+1} ]$

Note that as $f^\circ(t)\leq L<+\infty$, the jumps of the function $f$ are all
negative, and hence, in particular, $f(a_i)-f(a_i-)\leq 0$.  Introduce
equally, for $j\in\{i-1,i\}$, the function
\[
f_j(u) = \begin{cases}
f(u), &\text{if } u\in \bil[a_j,a_{j+1}\bir[,\\
f(a_{i+1}-), &\text{if } u=a_{j+1},
\end{cases}
\]
which is, in fact, the continuous extension on $[a_i,a_{i+1}]$ of the
restriction of $f$ on~$(a_i,a_{i+1})$.

Thus, applying the mean value theorem to the functions $f_{i-1}$ and $f_i$ on
the intervals $[ u,a_i ]$ and $[ a_i,v ]$ respectively, we finally get
\begin{align*}
f(v)-f(u) &= \bigl(f(v)-f(a_i)\bigr) + \bigl(f(a_i)-f(a_i-)\bigr) +
\bigl(f(a_i-)-f(u)\bigr)\\*
&\leq \bigl(f(v)-f(a_i)\bigr) + \bigl(f(a_i-)-f(u)\bigr)\\
&= \bigl(f_i(v)-f_i(a_i)\bigr) + \bigl(f_{i-1}(a_i)-f_{i-1}(u)\bigr)\\
&\leq L(v-a_i) + L(a_i -u)\\*
&= L(v-u).
\end{align*}

Now, for the proof of the necessity, let us suppose that there exists $u\in
\widetilde{D}_f$ such that~$f^\circ(u)> L$.  Then, in the vicinity of~$u$, we
can find two points $u$ and $u'$ such that
\[
\frac{f(u')-f(u'')}{u'-u''}> L,
\]
which contradicts the fact that $f$ is $L$--Lipschitz increasing.
\end{proof}

To conclude, let us note that the results of this section can be easily
adapted to the restricted moduli of continuity
\[
\Delta^{(M_1)}_h(f;[A, B])=\sup_{u\in [A,B]} \ \ \sup_{u',u''\in \RR \: : \:
  u-h\leq u'\leq u\leq u''\leq u+h} d\bigl( f(u);[f(u'), f(u'')] \bigr)
\]
and
\[
\Delta^{(U)}_h(f;[A, B])=\sup_{u\in [A,B]} \ \ \sup_{v\in \RR \: : \: u-h\leq
  v\leq u+h} \bil|f(u)-f(v)\bir|,
\]
using the restricted moduli of increase and decrease
\[
\Delta^\pm_h(f;[A, B])=\sup \bigl( f(v)-f(u) \bigr)_\pm \: ,
\]
where the $\sup$ is now taken over all $u,v\in\RR$ such that (at least) one of
them belongs to~$[A,B]$ and~$u\leq v\leq u+h$.  The only delicate point is the
relation between the Lipschitz growth (resp.~decay) and the modulus of
increase (resp.~decrease) established in Theorem~\ref{Module-crois-Lipsch}.
For example, in order to show the inequality $\Delta^{+}_h(f;[A, B]) \leq L
h$, we need to suppose that $f$ is $L$--Lipschitz increasing on the interval
$[A-h,B+h]\supsetneq [A,B]$, while if we suppose that this inequality holds,
we can only show that $f$ is $L$--Lipschitz increasing on the
interval~$[A,B]$.

\section{Application to the smooth change-point model for Poisson processes}
\label{Sec-EMV-rapide}

We start this section by recalling the smooth change-point model for Poisson
processes considered by the authors in \citet{Amiri-Dachian}.  Let
$n\in\NN^*$, let $0<\alpha < \beta <\tau$ be some known constants, and let
$\psi$ be some known strictly positive continuous function on $[0,\tau]$.  Let
also $r>-\min_{0\leq t\leq \tau} \psi (t)$ be some known constant,
$({\delta_n})_{n\in\NN}$ be some known sequence decreasing to~$0$, and let
$\theta \in \Theta =(\alpha,\beta)$ be a one-dimensional unknown parameter.
The aim is to estimate $\theta$ (in the asymptotics~$n\to+\infty$) from the
observation $X^{(n)}=(X_1,\ldots, X_n)$, where $X_j=\bigl(X_j(t),\ 0 \leq t
\leq \tau \bigr)$, $j = 1,\ldots , n$, are independent inhomogeneous Poisson
processes on the interval $[0,\tau]$ with intensity function
$\lambda_{\theta}=\lambda_{\theta}^{(n)}$ given by
\[
\lambda_{\theta}^{(n)}(t) = \psi(t) + \frac{r}{\delta_n} \, (t-\theta) \,
\ind_{[\theta, \theta+\delta_n)}(t) + r \, \ind_{[\theta+\delta_n,
      \tau]}(t),\qquad 0\leq t\leq \tau.
\]
This function, in an important particular case $\psi \equiv\lambda_0>0$, is
presented in Figure~\ref{fig-lambda}.

\begin{figure}[!ht]
\centering
\includegraphics[scale=0.35]{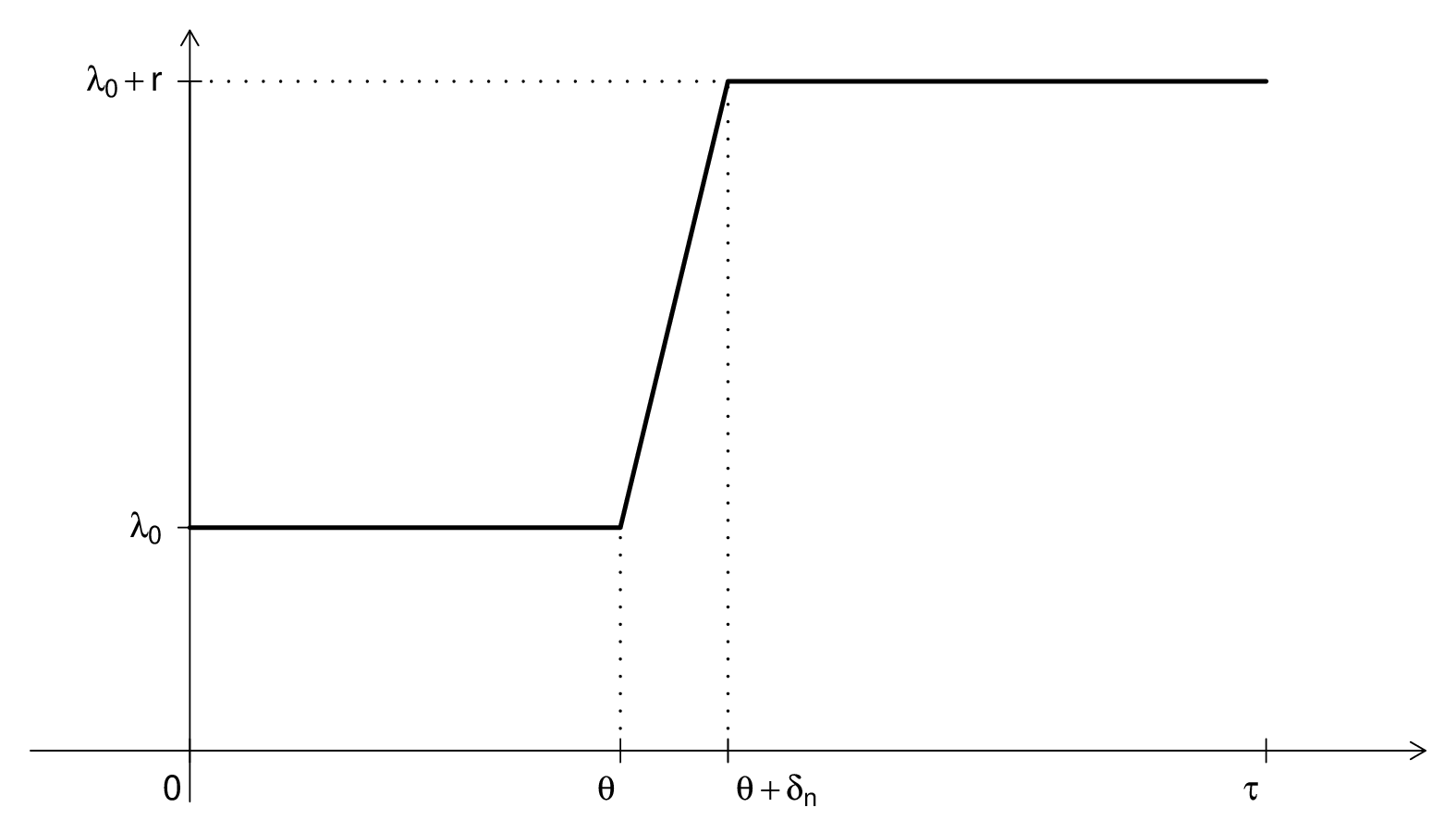}
\caption{intensity function $\lambda_\theta^{(n)}$ with $\psi\equiv\lambda_0$}
\label{fig-lambda}
\end{figure}

Note that this model of observation is equivalent to observing a single
realization on the interval~$[0, n\tau]$ of an inhomogeneous Poisson process
$X=\bigl(X(t),\ 0\leq t\leq n\tau\bigr)$ with a~$\tau$-periodic intensity
function equal to $\lambda_\theta^{(n)}$ on the first period (large
observation time asymptotics).  Also, it is equivalent to observing a single
realization on the interval~$[0, \tau]$ of an inhomogeneous Poisson process
$Y^{(n)}=\bigl(Y^{(n)}(t),\ 0\leq t\leq \tau \bigr)$ of intensity function
$\Lambda_{\theta}^{(n)} = n \, \lambda_{\theta}^{(n)}$ (large intensity
asymptotics).

We denote $\Pb_\theta = \Pb_\theta^{(n)}$ the probability measure
corresponding to $X^{(n)}$.  We also denote $\Ex_\theta = \Ex_\theta^{(n)}$
the corresponding mathematical expectation.  The likelihood, with respect to
the measure $\Pb_{*} = \Pb_{*}^{(n)}$ corresponding to $n$ independent
homogeneous Poisson processes of unit intensity, is given (see, for example,
\citet{LiShi01}) by
\[
L \bigr(\theta, X^{(n)}\bigl) =
\frac{\dd\Pb_{\theta}\bigl(X^{(n)}\bigr)}{\dd\Pb_{*}} = \exp\Biggl\{
\sum_{j=1}^{n} \int_{0}^{\tau} \ln\bigl(\lambda_{\theta}(t)\bigr)\dd
X_{j}(t)-n\int_{0}^{\tau} \bigl(\lambda_{\theta}(t)-1 \bigr) \dd t\Biggr\},
\qquad\!\! \theta \in \Theta .
\]

As estimators of the unknown parameter $\theta$, we consider the maximum
likelihood estimator~(MLE) and the Bayesian estimators~(BEs).  The
MLE~$\hat{\theta}_n$ is given by
\[
\hat{\theta}_n = \argsup_{\theta \in \Theta} L \bigl(\theta, X^{(n)} \bigr),
\]
and the BE $\tilde{\theta}_n$ for quadratic loss and prior density $q$ is
given by
\[
\tilde{\theta}_n = \frac{\int_{\alpha}^{\beta}\theta \, q(\theta) \, L
  \bigl(\theta, X^{(n)} \bigr) \dd \theta}{\int_{\alpha}^{\beta} q(\theta) \,
  L \bigl(\theta, X^{(n)} \bigr) \dd \theta} \, .
\]

The study of the asymptotic behavior of the MLE and of the BEs using the
likelihood ratio analysis method was initiated by the authors in
\citet{Amiri-Dachian}.  Note that the normalized likelihood ratio process of
the model has the form
\[
Z_n(u) = \exp\Biggl\{ \sum_{j=1}^{n} \int_{0}^{\tau}
\ln\biggl(\frac{\lambda_{\theta+u\varphi_n}(t)}{\lambda_{\theta}(t)}
\biggr)\dd X_{j}(t) - n\int_{0}^{\tau} \bigl(\lambda_{\theta+u\varphi_n}(t) -
\lambda_{\theta}(t) \bigr) \dd t\Biggr\},\qquad u\in\UU_n,
\]
and that its trajectories (after extension to the whole real line) almost
surely belong to the space $\EC_0(\RR)$.  It turns out that the asymptotic
behavior of the likelihood ratio of the model (as well as the properties of
the MLE and of the BEs) depends on the rate of convergence of~$\delta_n$ to
zero.  More precisely, there are three different cases:
\begin{align}
&n\delta_n\xrightarrow[n\to +\infty]{} +\infty,\label{casl}\\*
&n\delta_n\xrightarrow[n\to +\infty]{} 0\label{casr}\\
\intertext{and}
&n\delta_n\xrightarrow[n\to +\infty]{} c>0.\label{casc}
\end{align}

The asymptotic properties of the MLE and of the BEs in the case~\eqref{casl},
which we call \emph{slow regime}, as well as those of the BEs in the
case~\eqref{casr}, which we call \emph{fast regime}, were established in
\citet{Amiri-Dachian}.

In the slow regime, it was shown that the model is locally asymptotically
normal (LAN) with a likelihood normalization rate
$\varphi_n=\sqrt{\delta_n/n}\,$, and that the MLE and the BEs are consistent,
asymptotically normal and asymptotically efficient (convergence of moments of
the estimators also holds).  Note that here the trajectories of the limiting
likelihood ratio process are also continuous, and the weak convergence of the
likelihood ratio takes place in the space~$\EC_0^{(U)}(\RR)$.  Note also, that
except the unusual rate $\sqrt{\delta_n/n}\,$, the behavior of the model and
of the estimators is similar to that of the regular case studied by
\citeauthor{Kut98} in~\myciteyearp{Kut77,Kut79,Kut84,Kut98}.

In the fast regime, taking the likelihood normalization rate $\varphi_n$ to
be~$1/n$, it was shown that the normalized likelihood ratio process has the
following properties.
\begin{itemize}
\item The finite-dimensional distributions of the process $Z_n$ converge to
  those of the process $Z^\star_{a,b}$ with $a=\psi(\theta)$,
  $b=\psi(\theta)+r$ and
\[
Z^\star_{a,b}(u) = \begin{cases}
\vphantom{\bigg(} \exp \Bigl\{ \ln \bigl( \frac a b\bigr) Y^+(u)
+(b-a)u\Bigr\}, &\text{if }u\in\RR_+,\\
\vphantom{\bigg(} \exp \Bigl\{ \ln \bigl( \frac b a \bigr)
Y^-\bigl((-u)-\bigr) + (b-a)u\Bigr\}, &\text{if }u\in\RR_-,
\end{cases}
\]
where $Y^+$ and $Y^-$ are independent Poisson processes on~$\RR_+$ with
intensities $b$ and $a$ respectively.
\item There exists a constant $C>0$ such that
\[
\Ex_\theta \bigl\vert Z_n^{1/2} (u)- Z_n^{1/2} (v) \bigr\vert ^2 \leq C\bil| u
- v \bir|
\]
for all $n\in\NN^*$, $u,v \in \UU_n$ and $\theta \in \Theta$.
\item There exists a constant $\kappa>0$ such that for all $n$ sufficiently
  large, we have
\[
\Ex_\theta Z_n^{1/2}(u) \leq \exp\bigl\{-\kappa \min \{\bil| u \bir|, u^2\}
\bigr\}
\]
for all $u\in \UU_n$ and $\theta \in \Theta$.
\end{itemize}

These properties were sufficient to deduce the properties of the BEs in
\citet{Amiri-Dachian}.  Note that the process $Z^\star_{a,b}$ is the limiting
likelihood ratio process of the ``pure'' (discontinuous) change-point models
for Poisson processes studied by \citeauthor{Kut98}
in~\myciteyearp{Kut78,Kut84,Kut98}, and so the properties of the BEs are
exactly the same as in that case.  As to the MLE, its study was not possible
at that time, as the likelihood ratio analysis method was available only in
the spaces $\EC_0^{(U)}(\RR)$ and $\ED_0^{(J_1)}(\RR)$.  Indeed, here the
trajectories of the normalized likelihood ratio process are continuous, while
those of the limiting likelihood ratio process are discontinuous, and so, as
it was already mentioned, the weak convergence cannot take place in either of
these spaces.

Now, employing the $M_1$ version of the likelihood ratio analysis method
developed in this paper, we can establish the following properties of the MLE
in the fast regime, thereby completing the study of the latter.

\begin{theorem}
\label{EMV-CR}
Suppose $n\delta_n\to 0$.  Then, the MLE $\hat{\theta}_n$ has, uniformly with
respect to $\theta\in\KK$ for any compact $\KK\subset\Theta$, the following
properties:
\begin{itemize}
\item $\hat{\theta}_n$ is consistent;
\item $\hat{\theta}_n$ converges at rate $1/n$ and its limiting distribution
  is that of the random variable
\[
\eta_{a,b}=\argsup_{u\in \RR} Z^\star_{a,b}(u),
\]
that is,
\[
n \bigl(\hat{\theta}_n-\theta\bigr) \Longrightarrow \eta_{a,b}\; ;
\]
\item we have the convergence of polynomial moments, that is,
\[
\lim_{n\to +\infty} n^p \, \Ex_\theta \bigl|\hat{\theta}_n-\theta\bigr|^p =
\Ex {\bil| \eta_{a,b} \bir|}^p
\]
for any $p>0$.
\end{itemize}
\end{theorem}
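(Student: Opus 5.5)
The plan is to apply Theorem~\ref{Cv-M1-unif}, in the form relaxed by Remarks~\ref{CondA'-Cv-M1}, \ref{CondB'-Cv-M1} and~\ref{n-C1-Cv-M1}, and then the argument of the previous subsection for the MLE. Three ingredients are needed.

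\textbf{Inputs already available.} The convergence of the finite-dimensional distributions of $Z_n$ to those of $Z^\star_{a,b}$ is established in \citet{Amiri-Dachian}, uniformly in $\theta\in\KK$. The tail condition~\ref{CondB'} follows from the bound $\Ex_\theta Z_n^{1/2}(u)\leq\exp\{-\kappa\min\{|u|,u^2\}\}$. We take $g(x)=\kappa\min\{x,x^2\}$, which is nonnegative and increasing with $g(l)\geq\kappa l$ for $l\in\NN$, and the bound holds for $n\geq n_0$. It is also classical (\citet{Kut98}) that $\eta_{a,b}$ is almost surely unique. Indeed, $\ln Z^\star_{a,b}$ is piecewise linear with jumps at the Poisson points: downward jumps and upward drift on $\RR_+$, and the mirror picture on $\RR_-$. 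Its supremum is therefore attained at a jump point (or at $0$), and ties have probability zero because the jump times have continuous laws.

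\textbf{Main step: verifying~\ref{CondA'}.} This is where the hardest work lies; the plan is to use the one-sided tools of Section~\ref{Sec-outils}. Write
\[
\ln Z_n(u)=\sum_j\int \ln\frac{\lambda_{\theta+u/n}}{\lambda_\theta}\dd X_j-n\int(\lambda_{\theta+u/n}-\lambda_\theta)\dd t.
\]
Take $u>0$ and suppose $r>0$. As $u$ increases, the integrand $\ln(\lambda_{\theta+u/n}(t)/\lambda_\theta(t))$ is nonincreasing in $u$ for every fixed~$t$, since increasing $\theta$ delays the rise of the intensity. Hence the stochastic part is a nonincreasing function of $u$. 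The deterministic part, on the other hand, has derivative bounded by $|r|\max\psi^{-1}$-type constants (more precisely $n\cdot|r|/n$), so it is Lipschitz. Consequently $\ln Z_n$ is $L$--Lipschitz increasing on $[l,l+1]$, with a deterministic $L$ independent of $n$ and of $\theta\in\KK$.

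The same monotonicity holds on $u<0$, where the stochastic part is nondecreasing. The two halves must be glued at $u=0$: on a neighbourhood of $0$ the sign structure changes, but after checking the ratio expression for $u$ near $0$ one sees that the integrand is monotone in $u$ across the whole of $\RR$ for fixed $t$. If $r<0$ all directions reverse, and we use Lipschitz decay instead.

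Then $\ln Z_n^{1/2}$ has $\omega^+_h\leq Lh/2$ on $[l-1,l+2]$. Passing to $Z_n^{1/2}=\exp(\cdot)$ gives
\[
\Delta^{+}_h(Z_n^{1/2};[l,l+1])\leq \tfrac{L}{2}\,h\,\sup_{[l-1,l+2]}Z_n^{1/2},
\]
and Theorem~\ref{module-de-de/croissance-avec-M1}, adapted to restricted moduli as explained at the end of Section~\ref{Sec-outils}, bounds $\Delta^{(M_1)}_h$ by the same quantity. It remains to control $\sup_{[l-1,l+2]}Z_n^{1/2}$ in probability with a polynomial bound. For this we use the same trick: the supremum of a Lipschitz-increasing function over a grid of mesh $h$ plus $Lh$ times the supremum. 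Markov's inequality together with $\Ex Z_n^{1/2}\leq1$ on about $h^{-1}$ points then gives
\[
\Pb\Bigl(\sup Z_n^{1/2}>h^{-1/2}\Bigr)\leq Ch^{1/2}\cdot h^{-1}.
\]
This is too weak as it stands. The better route is to bound the supremum directly: $\sup Z_n^{1/2}\leq Z_n^{1/2}(l+2)\,e^{3L/2}$ because of the increasing-Lipschitz property, with direction adapted to the sign of~$r$. Markov's inequality then yields
\[
\Pb\Bigl(\Delta^{(M_1)}_h>h^{1/2}\Bigr)\leq Ch^{1/2},
\]
which is~\ref{CondA'} with $\gamma_1=\gamma_2=1/2$ and constant $B$.

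\textbf{Conclusion.} Theorem~\ref{Cv-M1-unif} gives assertions \ref{Res2}--\ref{Res3}, and Theorem~\ref{conv-faible-M1} then gives weak convergence in $\ED_0^{(M_1)}(\RR)$ uniformly in $\theta\in\KK$. The argument of the previous subsection yields $n(\hat\theta_n-\theta)\Longrightarrow\eta_{a,b}$, which implies consistency. Convergence of moments follows as in Section~5.4 of \citet{IbHas81}, using~\eqref{R2}.
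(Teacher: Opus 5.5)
Your overall route is the paper's. You get one-sided Lipschitz growth of $\ln Z_n$, then use Theorem~\ref{module-de-de/croissance-avec-M1} to bound the $M_1$ modulus by $Lh$ times a supremum, control that supremum by a single endpoint value, and finish with Markov's inequality. Your derivation of Lipschitz growth is a slightly more direct version of the paper's derivative argument via Theorem~\ref{Lipsch-Deri-Bor}: the stochastic part is nonincreasing in $u$ because only $\lambda_{\theta+u/n}(t)$ depends on $u$, and the deterministic part equals exactly $ru$. The detour about $u<0$ and gluing at $0$ is unnecessary.

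However, the endpoint step fails as you wrote it. For $r>0$, $L$--Lipschitz growth gives $\ln Z_n(u)\le \ln Z_n(l-1)+L(u-l+1)$. Hence $\sup_{[l-1,l+2]}Z_n^{1/2}\le e^{3L/2}Z_n^{1/2}(l-1)$, so the bound is by the \emph{left} endpoint. A bound by $Z_n^{1/2}(l+2)$ would require Lipschitz \emph{decay}. The decay constant of $\ln Z_n$ is of order $|r|/(n\delta_n)\to\infty$ in the fast regime, which is exactly the asymmetry the argument is built on; the right endpoint is correct only for $r<0$. With $l-1$ in place of $l+2$, Markov's inequality and $\Ex_\theta Z_n^{1/2}\le 1$ do give your bound $Ch^{1/2}$.

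The second gap is that you never say how $Z_n$ is extended outside $\UU_n$. Several steps depend on this:
\begin{itemize}
\item Lipschitz growth on intervals $[l-1,l+2]$ that meet $\RR\setminus\UU_n$;
\item the bound $\Ex_\theta Z_n^{1/2}\le 1$ at such points;
\item condition $(\mathcal{B}')$ for $u\notin\UU_n$, since the bound from \citet{Amiri-Dachian} is only available on $\UU_n$.
\end{itemize}
The naive extension by $0$ produces, for $r>0$, an upward jump at $u_n^-$, which destroys Lipschitz growth. The paper instead extends $\ln Z_n$ continuously and linearly with slope $|r|$, decreasing away from $\UU_n$. This keeps $\ln Z_n$ globally $|r|$--Lipschitz increasing (decreasing if $r<0$). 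It also gives $Z_n(u)\le Z_n(u_n^\pm)$ outside $\UU_n$, so $\Ex_\theta Z_n^{1/2}\le 1$ there and the suprema over $\RR$ in the MLE argument coincide with those over $\UU_n$. Finally, it yields $(\mathcal{B}')$ with $\kappa'=\min\{\kappa,|r|/2\}$. With these two corrections your proof is complete and matches the paper's.
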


As we have already said, we prove this theorem by using the $M_1$ version of
the likelihood ratio analysis method, that is, we establish the weak
convergence of $Z_n$ to $Z^\star_{a,b}$ in the space $\ED_0^{(M_1)}(\RR)$ and
then deduce the desired properties of the MLE.  For this, it is sufficient to
apply Theorem~\ref{Cv-M1-unif} (recall that the assertion~\ref{Res3} of the
latter theorem, together with the convergence of the finite-dimensional
distributions, guarantees the weak convergence of the likelihood ratios, from
which we can deduce the convergence and the limiting distribution of the
estimator, and that the assertion~\ref{Res2} allows to deduce the convergence
of its polynomial moments).

Before checking the conditions of Theorem~\ref{Cv-M1-unif}, let us note that
it shows that the properties of the MLE are also (like those of the BEs)
exactly the same as in the pure change-point models for Poisson processes.
Note also that this situation (same behavior of the estimator as in the pure
change-point case if the transition interval shrinks sufficiently fast, and
Gaussian limiting behavior with an unusual rate otherwise) is similar to what
happens in the smooth change-point model of signal observed in a white
Gaussian noise (cf.~Remark~5 of \citet{IbHas75}).  It should be also mentioned
that both the normalized and the limiting likelihood ratios being continuous
in the latter model, its study uses the space $\EC_0(\RR)$ and has not to deal
with topology issues on the space $\ED_0(\RR)$.

Let us also precise how we extend the process $Z_n(u)$ from the interval
$\UU_n$ to~$\RR$.  We could, of course, have simply set it equal to $0$
outside of $\UU_n$ (adjusting the values on the edges of $\UU_n$ in such a way
that the trajectories belong to the space $\ED_0(\RR)$).  But in order to
facilitate the study of the modulus of continuity we proceed differently.

First, we denote $u_n^-$ (resp.~$u_n^+$) the left (resp.~right) edge of the
interval~$\UU_n$, and extend the trajectories of $Z_n$ at these two points by
continuity.  Then, outside of~$\UU_n$, we decrease (as $u$ moves away
from~$\UU_n$) the trajectories of $\ln Z_n$ linearly with a slope equal
to~$\bil|r\bir|$.  Otherwise speaking, we extend $Z_n$ by putting $Z_n(u) =
Z_n(u_n^+) \exp{\{-\bil|r\bir|(u-u_n^+)\}}$ for~$u>u_n^+$, and $Z_n(u) =
Z_n(u_n^-) \exp{\{-\bil|r\bir|(u_n^--u)\}}$ for~$u<u_n^-$.  Note that the
trajectories of the obtained process belong to space~$\EC_0(\RR)\subset
\ED_0(\RR)$, and that we have the following lemma.

\begin{lemma}
The function $\ln Z_n(u)$, $u\in\RR$, is $r$--Lipschitz increasing in the case
$r>0$, and $\bil|r\bir|$--Lipschitz decreasing in the case $r<0$.
\end{lemma}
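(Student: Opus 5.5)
We will work directly with the explicit form of $\ln Z_n$. Writing $\vartheta=\theta+u\varphi_n$ with $\varphi_n=1/n$, on $\UU_n$ we have
\[
\ln Z_n(u) = \sum_{j=1}^n\int_0^\tau \ln\lambda_{\vartheta}(t)\dd X_j(t) - n\int_0^\tau\lambda_{\vartheta}(t)\dd t + \text{const},
\]
where the constant does not depend on $u$. We treat the two terms separately, in the case $r>0$; the case $r<0$ follows by the symmetric argument, or equivalently by exchanging the roles of increase and decrease. The plan is to show that $u\mapsto\ln Z_n(u)$ is a continuous function whose increments are bounded above by $r(v-u)$ for $u\le v$. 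By Theorem~\ref{Lipsch-Deri-Bor}, or simply by summing increments, this reduces to studying the two terms separately.

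\emph{Stochastic term.} For $r>0$, and for each fixed $t$, the map $\vartheta\mapsto\lambda_\vartheta(t)$ is non-increasing. Indeed, $\lambda_\vartheta(t)=\psi(t)+r\,g\bigl((t-\vartheta)/\delta_n\bigr)$, where $g(x)=\min\{\max\{x,0\},1\}$ is non-decreasing in $x$, hence non-increasing in $\vartheta$. Since $\ln$ is increasing and $\dd X_j$ is a non-negative measure, the term $\sum_j\int\ln\lambda_\vartheta\dd X_j$ is non-increasing in $u$. Its positive increments therefore vanish, and it contributes nothing to the modulus of increase.

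\emph{Deterministic term.} Compute $\int_0^\tau\lambda_\vartheta(t)\dd t=\int_0^\tau\psi+r(\tau-\vartheta-\delta_n/2)$, valid while $\vartheta+\delta_n\le\tau$, which holds for $\vartheta\in\Theta$ and $n$ large since $\beta<\tau$. Then
\[
-n\int_0^\tau\lambda_\vartheta\dd t = \text{const}+nr\vartheta = \text{const}+ru,
\]
which is exactly linear with slope $r$. Hence for $u\le v$ in $\UU_n$ we get $\ln Z_n(v)-\ln Z_n(u)\le r(v-u)$, and $\ln Z_n$ is $r$--Lipschitz increasing on $\UU_n$. Continuity of $\ln Z_n$ holds because $\lambda_\vartheta(t)$ is continuous in $\vartheta$ and bounded away from $0$, thanks to $r>-\min\psi$.

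\emph{Extension.} Outside $\UU_n$, $\ln Z_n$ is continuous, with slope $-|r|$ for $u>u_n^+$ and $+|r|$ for $u<u_n^-$. Both slopes are $\le r$ when $r>0$. Gluing these continuous pieces, each of which is $r$--Lipschitz increasing, by summing increments across the junction points $u_n^\pm$ as in the proof of Theorem~\ref{Lipsch-Deri-Bor}, gives the claim on all of $\RR$. For $r<0$, every step is mirrored: the stochastic term becomes non-decreasing, the drift becomes $ru=-|r|u$, and the extension slopes $\mp|r|$ are $\ge -|r|$. This yields $|r|$--Lipschitz decrease.

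The main obstacle is the monotonicity of the stochastic integral term: one must note that the monotonicity is pointwise in $t$ and holds for every realization, not merely in expectation. A smaller technical point is the edge condition $\vartheta+\delta_n\le\tau$, which guarantees the exact linearity of the compensator.
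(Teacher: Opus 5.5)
Your proof is correct and uses the same decomposition as the paper. The event sum $\sum_{j,i}\ln\lambda_{\theta+u/n}(t_{i,j})$ is non-increasing in $u$ for every realization, the compensator contributes exactly $ru$, and the extension slopes $\pm|r|$ are handled separately. The only difference is that the paper checks this through derivative bounds and Theorem~\ref{Lipsch-Deri-Bor}, whereas you sum increments directly, which spares you from verifying piecewise differentiability.

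The paper also leaves implicit the edge condition $\vartheta+\delta_n\le\tau$ that you flag. You do not actually need it: for $\vartheta>\tau-\delta_n$ the compensator has slope $r(\tau-\vartheta)/\delta_n$ in $u$, which still lies between $0$ and $r$. So your argument covers every $n$, not only $n$ large.
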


\begin{proof}
We consider only the case $r>0$ (the case $r<0$ can be treated in a similar
way).  We are going to prove this lemma by applying
Theorem~\ref{Lipsch-Deri-Bor}.  For this, note that for $u\in\UU_n$, we have
\begin{align*}
\ln Z_n(u) &= \sum_{j=1}^{n} \int_{0}^{\tau}
\ln\biggl(\frac{\lambda_{\theta+\frac{u}{n}}(t)}{\lambda_{\theta}(t)}\biggr)
\dd X_{j}(t) - n\int_{0}^{\tau} \bigl( \lambda_{\theta+\frac{u}{n}}(t) -
\lambda_{\theta}(t)\bigr) \dd t\\*
&= \sum_{j=1}^n\sum_{i=1}^{X_j(T)}
\ln\biggl(\frac{\lambda_{\theta+\frac{u}{n}}(t_{i,j})}{\lambda_{\theta}(t_{i,j})}\biggr)
+ ru,
\end{align*}
where $t_{i,j}$ is the $i$-th ``event'' (discontinuity~point) of~$X_j$.

Since
\[
\lambda_{\theta+\frac{u}{n}}(t_{i,j}) = \psi(t_{i,j}) + \frac{r}{\delta_n} \,
\Bigl( t_{i,j}-\theta-\frac{u}{n} \Bigr) \, \ind_{[\theta +\frac{u}{n},
    \theta+\frac{u}{n}+\delta_n)}(t_{i,j}) + r \,
  \ind_{[\theta+\frac{u}{n}+\delta_n, \tau]}(t_{i,j}),
\]
every term of the last sum is differentiable with respect to $u$ everywhere,
except (at most) at two points: $n(t_{i,j}-\theta)$ and
$n(t_{i,j}-\theta)-n\delta_n$ (depending if they belong or not to~$\UU_n$).
The function $\ln Z_n$ is therefore piecewise differentiable.

Moreover, for all $j=1,\ldots,n$, $i=1,\ldots,X_j(T)$ and $u\in\UU_n$ which is
a differentiability point of $\lambda_{\theta+\frac{u}{n}}(t_{i,j})$, we have
\[
\frac{\dd}{\dd u} \,
\ln\biggl(\frac{\lambda_{\theta+\frac{u}{n}}(t_{i,j})}{\lambda_{\theta}(t_{i,j})}\biggr)
= \frac{\dd}{\dd u} \, \ln \bigl(\lambda_{\theta+\frac{u}{n}}(t_{i,j})\bigr) =
\frac{1}{\lambda_{\theta+\frac{u}{n}}(t_{i,j})}\, \frac{\dd}{\dd u} \,
\lambda_{\theta+\frac{u}{n}}(t_{i,j}) \leq 0.
\]
Indeed, $\lambda_{\theta+\frac{u}{n}}(t_{i,j}) \geq 0$ and $\frac{\dd}{\dd u}
\, \lambda_{\theta+\frac{u}{n}}(t_{i,j}) \leq 0$, since
\[
\frac{\dd}{\dd u} \, \lambda_{\theta+\frac{u}{n}}(t_{i,j}) =
\begin{cases}
-\frac{r}{n\delta_n}\, , &\text{if } u\in(n(t_{i,j}-\theta)-n\delta_n,
n(t_{i,j}-\theta)) \cap \UU_n,\\
0, &\text{if } u\in[n(t_{i,j}-\theta)-n\delta_n,
  n(t_{i,j}-\theta)]^{\mathrm{c}} \cap \UU_n.
\end{cases}
\]

Thus,
\[
\frac{\dd}{\dd u}\, \ln Z_n(u) = \sum_{j=1}^n\sum_{i=1}^{X_j(T)}
\frac{\dd}{\dd u} \,
\ln\biggl(\frac{\lambda_{\theta+\frac{u}{n}}(t_{i,j})}{\lambda_{\theta}(t_{i,j})}\biggr)
+ r \leq r
\]
for every differentiability point $u\in\UU_n$ of $\ln Z_n$.

Finally, as we have $\frac{\dd}{\dd u}\, \ln Z_n(u)=-r$ for $u>u_n^+$ , and
$\frac{\dd}{\dd u}\, \ln Z_n(u)=r$ for $u<u_n^-$, we obtain $\frac{\dd}{\dd
  u}\, \ln Z_n(u) \leq r$ for every differentiability point $u\in\RR$ of~$\ln
Z_n$.  Therefore, according to Theorem~\ref{Lipsch-Deri-Bor}, the function
$\ln Z_n(u)$, $u\in\RR$, is $r$--Lipschitz increasing.
\end{proof}

It is interesting to note that this lemma will allow us to control the
restricted modulus of continuity of the function $\ln Z_n$ in the proof of the
following lemma.  Indeed, for example, in the case $r>0$, this lemma
guarantees that $\ln Z_n$ is $r$--Lipschitz increasing, and although it is
only $\frac{r}{n\delta_n}$--Lipschitz decreasing (recall that we are in the
fast regime, and that we have~$\frac{r}{n\delta_n}\gg r$), due to
Theorem~\ref{module-de-de/croissance-avec-M1}, the smaller of the two
constants prevails.

To verify the condition~\ref{CondA} of Theorem~\ref{Cv-M1-unif}, it is
sufficient, due to Remark~\ref{CondA'-Cv-M1}, to verify the
condition~\ref{CondA'}, that is, to prove the following lemma.

\begin{lemma}
Let $\KK\subset\Theta$ be a compact.  Then, there exist $\gamma>0$ and $h_0>0$
such that for all $0<h<h_0$, $n\in \NN^*$, $l\in \ZZ$ and $\theta \in \KK$, we
have
\begin{equation}
\label{ineg-M1-Zn}
\Pb_\theta\Bigl( \Delta^{(M_1)}_h\bigl(Z_n^{1/2};[l, l+1]\bigr)>h^\gamma
\Bigr)\leq C\, h^{2\gamma},
\end{equation}
where $C$ is a positive constant.
\end{lemma}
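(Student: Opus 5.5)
We treat only $r>0$; the case $r<0$ is symmetric, exchanging the roles of increase and decrease. The plan is to control $\Delta^{(M_1)}_h(Z_n^{1/2};[l,l+1])$ by a restricted modulus of increase, using the adaptation of Theorem~\ref{module-de-de/croissance-avec-M1} to restricted moduli described at the end of Section~\ref{Sec-outils}:
\[
\Delta^{(M_1)}_h\bigl(Z_n^{1/2};[l,l+1]\bigr)\leq \Delta^{+}_h\bigl(Z_n^{1/2};[l,l+1]\bigr).
\]
The previous lemma says that $\ln Z_n^{1/2}=\frac12\ln Z_n$ is $\frac r2$--Lipschitz increasing on all of~$\RR$. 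Hence, for $u\leq v\leq u+h$,
\[
Z_n^{1/2}(v)-Z_n^{1/2}(u)\leq Z_n^{1/2}(u)\bigl(e^{r(v-u)/2}-1\bigr)\leq Z_n^{1/2}(u)\,\bigl(e^{rh/2}-1\bigr)\leq C_1\,h\,Z_n^{1/2}(u)
\]
for $h<h_0$. Therefore
\[
\Delta^{(M_1)}_h\bigl(Z_n^{1/2};[l,l+1]\bigr)\leq C_1\,h\,M_{n,l},\qquad M_{n,l}=\sup_{l-1\leq u\leq l+2}Z_n^{1/2}(u).
\]
This step is deterministic and uses only the one-sided Lipschitz property; the steep decreasing transitions, of slope $r/(n\delta_n)$, play no role.

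It then remains to show $\Pb_\theta\bigl(M_{n,l}>h^{\gamma-1}/C_1\bigr)\leq C h^{2\gamma}$ for some $\gamma<1$, uniformly in $n$, $l$ and $\theta\in\KK$. Again by the Lipschitz-increase property, we have $Z_n^{1/2}(u)\leq e^{r}Z_n^{1/2}(w)$ for every $w\in[u,u+2]$. Taking $w$ on a fixed grid is problematic because the right end of the window matters. A cleaner route is to note that for any $u\in[l-1,l+2]$ the point $w=l+3$ or the interval $[u,u+1]$ lies within distance $4$. So $M_{n,l}\leq e^{2r}\inf_{w\in[l+2,l+3]}Z_n^{1/2}(w)$ fails in the wrong direction; I would instead use $M_{n,l}\leq e^{4r}Z_n^{1/2}(w)$ for \emph{every} $w\in[l+2,l+3]$. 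This gives
\[
M_{n,l}\leq e^{4r}\int_{l+2}^{l+3}Z_n^{1/2}(w)\dd w,
\]
and hence $\Ex_\theta M_{n,l}\leq e^{4r}\sup_w\Ex_\theta Z_n^{1/2}(w)\leq e^{4r}$. On $\UU_n$ this uses $\Ex_\theta Z_n^{1/2}\leq1$, which follows from Cauchy--Schwarz since $\Ex_\theta Z_n=1$ there. Outside $\UU_n$ the explicit exponential extension only decreases values. Markov's inequality then yields
\[
\Pb_\theta\bigl(M_{n,l}>h^{\gamma-1}/C_1\bigr)\leq C_1e^{4r}h^{1-\gamma},
\]
so any $\gamma\leq 1/3$ works, since then $1-\gamma\geq 2\gamma$.

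The step requiring most care is the bound on $M_{n,l}$: the Lipschitz increase must be correctly oriented. The inequality $Z_n^{1/2}(u)\leq e^{r(w-u)/2}Z_n^{1/2}(w)$ is valid only for $w\geq u$, which is why the averaging interval is placed to the right of the window. A second point is checking the restricted version of Theorem~\ref{module-de-de/croissance-avec-M1}: the points $u',u''$ range over $[l-h,l+1+h]$, so the increase bound must hold on that slightly larger interval. The Lipschitz property on all of $\RR$, including the extension outside $\UU_n$, covers this. Uniformity in $\theta\in\KK$ and $n$ is automatic, since all constants depend only on $r$.
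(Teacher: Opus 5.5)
There is a genuine gap in your bound on $M_{n,l}$, at exactly the step you flagged as delicate: the orientation of the Lipschitz-increase inequality is reversed. Your deterministic reduction is sound. It differs mildly from the paper's route: the paper passes through $\ln Z_n$, the mean value theorem for $\exp$, and $\Delta^{(M_1)}_h(\ln Z_n;[l,l+1])\le\Delta^+_h(\ln Z_n;[l,l+1])\le rh$, whereas you bound the increase of $Z_n^{1/2}$ directly by $(e^{rh/2}-1)Z_n^{1/2}(u)$. Yours also tracks more carefully that the relevant points lie in $[l-h,l+1+h]$.

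Lipschitz increase of $\ln Z_n$ means $\ln Z_n(v)\le\ln Z_n(u)+r(v-u)$ for $u\le v$, that is,
\[
Z_n^{1/2}(v)\le e^{r(v-u)/2}\,Z_n^{1/2}(u),\qquad u\le v.
\]
So a value is controlled by values to its \emph{left}, not to its right. Your inequality $Z_n^{1/2}(u)\le e^{r(w-u)/2}Z_n^{1/2}(w)$ for $w\ge u$ would require Lipschitz \emph{decay}, and that is precisely what fails here. The function $\ln Z_n$ decreases with slope of order $r/(n\delta_n)\to+\infty$ and loses about $\ln(b/a)$ near each event. If several events fall in the stretch corresponding to $u\in[l+2,l+3]$, then $Z_n^{1/2}(w)$ there is arbitrarily smaller than $M_{n,l}$. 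Hence the claim $M_{n,l}\le e^{4r}Z_n^{1/2}(w)$ for every $w\in[l+2,l+3]$ is false with positive probability. It is in fact the same statement, up to the constant, as the infimum version you had just rejected. The integral bound and $\Ex_\theta M_{n,l}\le e^{4r}$ therefore do not follow.

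The repair is immediate: take the reference point on the left. For $u\in[l-1,l+2]$ you have $Z_n^{1/2}(u)\le e^{r(u-l+1)/2}Z_n^{1/2}(l-1)\le e^{3r/2}Z_n^{1/2}(l-1)$. Hence $M_{n,l}\le e^{3r/2}Z_n^{1/2}(l-1)$ and $\Ex_\theta M_{n,l}\le e^{3r/2}$. Your Markov step then gives $C_1e^{3r/2}h^{1-\gamma}\le Ch^{2\gamma}$ for $\gamma\le 1/3$ and $h<1$. No averaging is needed; a single left point suffices. This is what the paper does with $\sup_{[l,l+1]}Z_n^{1/2}\le e^{r/2}Z_n^{1/2}(l)$. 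The paper then applies Markov to $Z_n(l)$ rather than to $Z_n^{1/2}$, which permits any $\gamma<1/2$.
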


\begin{proof}
We consider again only the case $r>0$ (the case $r<0$ can be treated in a
similar way).  For all $u,u'\in[l,l+1]$, applying the mean value theorem to
the exponential function between $\ln Z_n^{1/2}(u)$ and $\ln Z_n^{1/2}(u')$,
we have
\begin{align*}
\bigl| Z_n^{1/2}(u)-Z_n^{1/2}(u')\bigr| &= \bigl| \exp \bigl\{ \ln
Z_n^{1/2}(u) \bigr\} - \exp \bigl\{ \ln Z_n^{1/2}(u') \bigr\} \bigr|\\*
&\leq \bigl| \ln Z_n^{1/2}(u) - \ln Z_n^{1/2}(u') \bigr|
\max\{Z_n^{1/2}(u),Z_n^{1/2}(u') \}\\*
&\leq \frac{1}{2}\, \bigl|\ln Z_n(u) - \ln Z_n(u') \bigr| \sup_{v\in [l,l+1]}
Z_n^{1/2}(v),
\end{align*}
which implies (noting that the function $x\mapsto\exp\{x/2\}$ is monotone)
that
\[
\Delta^{(M_1)}_h\bigl(Z_n^{1/2};[l, l+1]\bigr) \leq \frac{1}{2}\,
\Delta^{(M_1)}_h\bigl(\ln Z_n;[l, l+1]\bigr) \sup_{v\in [l,l+1]} Z_n^{1/2}(v).
\]
As the function $\ln Z_n$ is $r$--Lipschitz increasing, for all $u\in[l,l+1]$,
we have
\[
\ln Z_n(u) \leq \ln Z_n(l)+r,
\]
which yields
\[
Z_n^{1/2}(u) \leq e^{r/2}\, Z_n^{1/2}(l).
\]
Hence, we obtain
\[
\Delta^{(M_1)}_h\bigl(Z_n^{1/2};[l, l+1]\bigr) \leq \frac{1}{2}\,
\Delta^{(M_1)}_h\bigl(\ln Z_n;[l, l+1]\bigr) \, e^{r/2}\, Z_n^{1/2}(l).
\]

Using Theorems~\ref{module-de-de/croissance-avec-M1}
and~\ref{Module-crois-Lipsch}, we get
\begin{align*}
\Delta^{(M_1)}_h\bigl(\ln Z_n;[l, l+1]\bigr) &\leq \min\bigl\{
\Delta^+_h\bigl(\ln Z_n;[l, l+1]\bigr), \Delta^-_h\bigl(\ln Z_n;[l,
  l+1]\bigr)\bigr\}\\*
&\leq \Delta^+_h\bigl(\ln Z_n;[l, l+1]\bigr)\\*
&\leq r h,
\end{align*}
and we can thus write
\begin{align*}
\Pb_\theta\Bigl( \Delta^{(M_1)}_h\bigl(Z_n^{1/2};[l, l+1]\bigr)>h^\gamma
\Bigr) &\leq \Pb_\theta\biggl( \frac{1}{2}\, \Delta^{(M_1)}_h\bigl(\ln Z_n;[l,
  l+1]\bigr)\, e^{r/2}\, Z_n^{1/2}(l)>h^\gamma \biggr)\\*
&\leq \Pb_\theta\bigl( r h >h^{2\gamma} \bigr) + \Pb_\theta\biggl(
\frac{1}{2}\, e^{r/2}\, Z_n^{1/2}(l) >h^{-\gamma} \biggr).
\end{align*}

For the first term, taking $\gamma<1/2$ and putting
$h_0=r^{\frac{1}{2\gamma-1}}$, we obtain
\[
\Pb_\theta\bigl( r h > h^{2\gamma} \bigr) = \Pb_\theta\bigl( r > h^{2\gamma-1}
\bigr) = \Pb_\theta( h_0 < h ) = 0
\]
for all $0<h<h_0$.

For the second term, by Markov's inequality we have
\[
\Pb_\theta\biggl( \frac{1}{2}\, e^{r/2}\, Z_n^{1/2}(l) >h^{-\gamma} \biggr) =
\Pb_\theta\Bigl( Z_n(l) > 4e^{-r}h^{-2\gamma} \Bigr) \leq \frac{\Ex_\theta
  Z_n(l)}{4e^{-r}h^{-2\gamma}}\, ,
\]
and as $\Ex_\theta Z_n(l)\leq 1$ (actually, we have $\Ex_\theta Z_n(l)= 1$ if
$l\in\UU_n$, and we can bound $\Ex_\theta Z_n(l)$ by $\Ex_\theta Z_n(u_n^+)$
or by $\Ex_\theta Z_n(u_n^-)$ otherwise), we get
\[
\Pb_\theta\biggl( \frac{1}{2}\, e^{r/2}\, Z_n^{1/2}(l) >h^{-\gamma} \biggr)
\leq \frac{e^r}{4}\, h^{2\gamma}.
\]

So,
\[
\Pb_\theta\Bigl( \Delta^{(M_1)}_h\bigl(Z_n^{1/2};[l, l+1]\bigr)>h^\gamma
\Bigr) \leq \frac{e^r}{4}\, h^{2\gamma},
\]
and the inequality~\eqref{ineg-M1-Zn} holds (with $C=e^r/4$).
\end{proof}

It remains to verify the condition~\ref{CondB} of Theorem~\ref{Cv-M1-unif}.
More precisely, taking into account Remarks~\ref{CondB'-Cv-M1}
and~\ref{n-C1-Cv-M1}, we will instead show the following lemma.

\begin{lemma}
Let $\KK\subset\Theta$ be a compact.  Then, there exist $\kappa'>0$ and
$n_0\in\NN^*$ such that
\[
\Ex_\theta Z_n^{1/2}(u) \leq \exp\bigl\{-\kappa' \min \{\bil| u \bir|, u^2\}
\bigr\}
\]
for all $n\geq n_0$, $u\in \RR$ and $\theta \in \KK$.
\end{lemma}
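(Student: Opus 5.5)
The idea is to treat $u\in\UU_n$ and $u\notin\UU_n$ separately, relying on the bound already established in \citet{Amiri-Dachian}: there exist $\kappa>0$ and $n_0$ such that $\Ex_\theta Z_n^{1/2}(u)\leq\exp\{-\kappa\min\{|u|,u^2\}\}$ for all $n\geq n_0$, $u\in\UU_n$ and $\theta\in\Theta$. So for $u\in\UU_n$ the bound holds with $\kappa'\leq\kappa$, and what remains is the extension region. To be self-contained we would also recall where that bound comes from. The Poisson Hellinger identity gives
\[
\Ex_\theta Z_n^{1/2}(u)=\exp\Bigl\{-\frac n2\int_0^\tau\bigl(\sqrt{\lambda_{\theta+u/n}}-\sqrt{\lambda_\theta}\bigr)^2\dd t\Bigr\}.
\]
Since $n\delta_n\to0$, this integral is, up to terms controlled uniformly for $\theta\in\KK$, of order $|u|/n\cdot(\sqrt{\psi+r}-\sqrt\psi)^2$ for $|u|\geq n\delta_n$, and of order $u^2/(n^2\delta_n)\cdot c$ for smaller $|u|$. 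This produces the $\min\{|u|,u^2\}$ shape, with $\psi$ bounded away from $0$ and $\infty$ on $[0,\tau]$.

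Now take $u>u_n^+$; the case $u<u_n^-$ is symmetric. By construction $Z_n^{1/2}(u)=Z_n^{1/2}(u_n^+)\exp\{-|r|(u-u_n^+)/2\}$, where $Z_n(u_n^+)$ is obtained by continuity at the edge. Therefore
\[
\Ex_\theta Z_n^{1/2}(u)\leq\exp\{-\kappa\min\{u_n^+,(u_n^+)^2\}\}\exp\{-|r|(u-u_n^+)/2\}.
\]
Here we use the edge bound, which follows from the in-$\UU_n$ bound by Fatou's lemma or by continuity of the Hellinger integral in $u$. For $\theta\in\KK$ we have $u_n^+=n(\beta-\theta)\geq n\,d(\KK,\{\alpha,\beta\})\geq1$ once $n$ is large, so $\min\{u_n^+,(u_n^+)^2\}=u_n^+$. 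Setting $\kappa'=\min\{\kappa,|r|/2\}$ then gives
\[
\Ex_\theta Z_n^{1/2}(u)\leq\exp\{-\kappa' u_n^+-\kappa'(u-u_n^+)\}=\exp\{-\kappa' u\},
\]
and since $u\geq1$ this equals $\exp\{-\kappa'\min\{|u|,u^2\}\}$.

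The main obstacle is uniformity in $\theta$ near the edges, which is why we restrict to the compact $\KK$: it guarantees that $u_n^\pm$ are linearly large, so the tail regime $\min=|u|$ applies. The continuity-at-the-edge step also needs care, since $\theta+u_n^+/n=\beta$ lies outside $\Theta$. We expect the intensity formula to extend continuously there, so the Hellinger computation remains valid at $u_n^\pm$. Finally, to apply Remark~\ref{CondB'-Cv-M1} we take $g(x)=\kappa'\min\{x,x^2\}$, which is increasing and satisfies $g(l)\geq\kappa' l$ for $l\in\NN$; Remark~\ref{n-C1-Cv-M1} handles the fact that the bound only holds for $n\geq n_0$.
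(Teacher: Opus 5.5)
Your proof is correct and follows essentially the same route as the paper. You use the in-$\UU_n$ bound from \citet{Amiri-Dachian}, then for $u>u_n^+$ (and symmetrically for $u<u_n^-$) the extension formula with $\kappa'=\min\{\kappa,|r|/2\}$ and $u_n^+\geq 1$ for large $n$; your Fatou justification of the bound at the edge point and your use of the compactness of $\KK$ to get $u_n^\pm$ linearly large uniformly in $\theta$ are details the paper leaves implicit.
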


\begin{proof}
First of all, recall that for $u\in \UU_n$ the desired inequality was already
obtained (and is valid for any~$\kappa'\leq \kappa$).

Now, if $u\geq u_n^+$ , putting $\kappa'=\min\{ \kappa,\bil|r\bir|/2 \}$ we
get
\begin{align*}
\Ex_\theta Z_n^{1/2}(u) &= \Ex_\theta Z_n^{1/2}(u_n^+)
\exp{\{-\bil|r\bir|(u-u_n^+)/2\}}\\*
&\leq \exp\{-\kappa u_n^+ \} \exp{\{-\bil|r\bir|(u-u_n^+)/2\}}\\
&\leq \exp\{-\kappa' u_n^+ \} \exp{\{-\kappa'(u-u_n^+)\}}\\*
&= \exp{\{-\kappa'u\}}.
\end{align*}
Here, without loss of generality, we have assumed~$u_n^+\geq1$ (indeed, it is
enough to adjust~$n_0$).

The case $u\leq u_n^-$ can be treated in a similar way, and so the inequality
is established for all~$u\in\RR$.
\end{proof}

Thus, all the conditions of Theorem~\ref{Cv-M1-unif} are satisfied, and hence
the (uniform with respect to~$\theta \in \KK$) weak convergence of $Z_n$ to
$Z^\star_{a,b}$ in the space $\ED_0^{(M_1)}(\RR)$ and the properties of the
MLE stated in Theorem~\ref{EMV-CR} follow.

\section{Conclusion and discussion}
\label{Sec-Discussion}

For models with discontinuous likelihood ratio processes, the
Ibragimov--Khasminskii likelihood ratio analysis method was, prior to this
work, developed and applied using the usual Skorokhod topology~$J_1$.  In this
paper, we propose a version of the likelihood ratio analysis method which uses
the (weaker) Skorokhod $M_1$ topology.  We then apply this $M_1$ version of
the likelihood ratio analysis to the smooth change-point model for Poisson
processes.

The latter model is interesting on its own.  Its study was initiated by the
authors in \citet{Amiri-Dachian}.  It was shown there that in the slow
regime~\eqref{casl}, the behavior of the model and of the estimators (MLE and
BEs) is similar to the regular case (though with an unusual normalization
rate~$\varphi_n=\sqrt{\delta_n/n}\,$), and that in the fast
regime~\eqref{casr}, the behavior of the BEs is exactly the same as in the
pure change-point case.  In this paper, employing the newly developed $M_1$
version of the likelihood ratio analysis, we were able to show that the
behavior of the MLE is also exactly the same as in the pure change-point case.
We believe that these results provide a theoretical explanation of why pure
change-point models have proved successful in real-world applications, despite
the fact that physical systems cannot switch instantaneously from one regime
to another through a discontinuous transition.  Indeed, it is more realistic
to suppose that the transition happens smoothly over a small interval, and if
this interval is short enough (which is often the case in practice), the
asymptotic behavior is the same.  Note that the \emph{critical
regime\/}~\eqref{casc} has not yet been considered, and its study is one of
the possible avenues for this work.

Another natural direction is to use the $M_1$ version of the likelihood ratio
analysis to investigate other models whose normalized likelihood ratios are
continuous, while their limiting counterparts are discontinuous.  Remaining in
the framework of Poisson processes, one can consider, for example, intensities
of the form
\[
\psi(t) + r \, \biggl(\frac{t-\theta}{\delta}\biggr)^\kappa \, \ind_{[\theta,
    \theta+\delta)}(t) + r \, \ind_{[\theta+\delta, \tau]}(t),\qquad 0\leq
  t\leq \tau,
\]
with either $\delta=\delta_n\to 0$, or $\kappa=\kappa_n\to 0$ (or both).
Also, it would be interesting to apply these techniques to other observation
models, the i.i.d.\ observation model being a natural starting point.

Finally, we believe that the $M_1$ version of the likelihood ratio analysis
can also facilitate the study of models where both the normalized and the
limiting likelihood ratios are discontinuous (where, of course, the classical
$J_1$ version also works).  Indeed, since the topology~$M_1$ is weaker
than~$J_1$, one can expect the corresponding tightness criteria, and in
particular the control of the $M_1$ modulus of continuity, to be easier to
verify.  For example, for the study of the i.i.d.\ and Poissonian pure
change-point models, \citeauthor{IbHas81}
in~\myciteyearp{IbHas70,IbHas72,IbHas81} and \citeauthor{Kut98}
in~\myciteyearp{Kut78,Kut84,Kut98} use essentially the same technique to
control the $J_1$ modulus of continuity.  They need to control both the
increments of the continuous part $Z_{n,\mathrm{c.}}^{1/2}$ of $Z_n^{1/2}$
through an inequality of the form
\begin{equation}
\label{ctrl-cp}
\Ex_\theta \bigl| Z_{n,\mathrm{c.}}^{1/2} (u)- Z_{n,\mathrm{c.}}^{1/2} (v)
\bigr| ^2 \leq C\bil| u - v \bir|^2,
\end{equation}
and the probabilities of having one or two jumps on a small interval through
inequalities of the form
\begin{equation}
\label{ctrl-j}
\Pb_\theta (A_i)\leq C_i h^i,\qquad i=1,2,
\end{equation}
where $A_i=A_i^{(n)}(u,u+h)$ is the event $\bigl\{Z_n$ admits at least $i$
discontinuity points on the interval $(u,u+h)\bigr\}$.  Consider, for
instance, the case where exactly one change-point (one discontinuity) is
present in the initial model.  Then all the jumps of $Z_n$ are of the same
sign and, according to Theorem~\ref{Ineg-M1-U}, the control of the $M_1$
modulus of continuity only requires establishing the
inequality~\eqref{ctrl-cp}.  Therefore, there is no need to derive the
inequalities~\eqref{ctrl-j}, which substantially shortens the proofs.

\bigskip
\noindent\textbf{Acknowledgments.}
The authors acknowledge the support of the CDP C$^2$EMPI, together with the
French State under the France-2030 program, the University of Lille, the
Initiative of Excellence of the University of Lille, the European Metropolis
of Lille for their funding and support of the R-CDP-24-004-C2EMPI project.

\bibliographystyle{abbrvnat}
\bibliography{bibSCPFC.bib}

\end{document}